\newcommand{\R}{{\mathbb R}}
\newcommand{\C}{{\mathbb C}}
\newcommand{\Z}{{\mathbb Z}}
\newcommand{\N}{{\mathbb N}}
\newcommand{\Q}{{\mathbb Q}}
\renewcommand{\H}{{\mathbb H}}
\newcommand{\calG}{\mathcal{G}}
\newcommand{\calP}{\mathcal{P}}
\newcommand{\calQ}{\mathcal{Q}}
\newcommand{\calS}{\mathcal{S}}
\newcommand{\calU}{\mathcal{U}}
\renewcommand{\calS}{\mathcal{S}}
\newcommand{\arctanh}{\operatorname{arctanh}}
\newcommand{\sgn}{\mbox{sgn}}
\newcommand{\SL}{\mathrm{SL}}
\newcommand{\GL}{\mathrm{GL}}
\newcommand{\ol}[1]{\overline{#1}}
\theoremstyle{plain}
\newtheorem{thm}{Theorem}[section]
\newtheorem{lem}[thm]{Lemma}
\newtheorem{lemma}[thm]{Lemma}
\newtheorem{prop}[thm]{Proposition}
\theoremstyle{definition}
\newtheorem{defn}[thm]{Definition}
\newtheorem{rem}[thm]{Remark}
\numberwithin{equation}{section}
\newcommand{\pmat}[1]{\left( \smallmatrix #1 \endsmallmatrix \right)}
\newcommand{\mat}[1]{\left( \begin{matrix} #1 \end{matrix} \right)}
\renewcommand{\sgn}{\textnormal{sgn}}
\def\lp{\left(}
\def\rp{\right)}
\def\a{\alpha}
\def\b{\beta}
\def\d{\delta}
\def\l{\lambda}
\def\z{\zeta}
\def\n{\nu}
\def\s{\sigma}
\def\g{\gamma}
\def\t{\tau}
\def\OO{{\Omega}}
\def\GG{\Gamma}
\def\LL{\Lambda}
\def\del{  \partial}
\renewcommand{\sgn}{{\rm sgn}}
\def\wh{\widehat}
\def\bar{\overline}
\newcommand{\andd}{\quad \mbox{ and } \quad}
\setlist[itemize]{noitemsep, topsep=0pt}
\newcommand{\vast}{\bBigg@{2}}
\newcommand{\Vast}{\bBigg@{5}}
\renewcommand{\pmod}[1]{\    \left(  \mathrm{mod} \,  #1 \right)}
\newcommand{\nolisttopbreak}{\par\nobreak\@afterheading} 
\newcommand{\mT}{\Theta}
\newcommand{\mt}{\vartheta}
\newcommand{\mF}{F}
\newcommand{\mf}{f}
\newcommand{\mcalF}{\mathcal{F}}
\newcommand{\mfrakf}{\mathfrak{f}}
\newcommand{\mG}{G}
\newcommand{\mg}{g}
\newcommand{\mcalG}{\mathcal{G}}
\newcommand{\mfrakg}{\mathfrak{g}}
\newcommand{\mH}{H}
\newcommand{\mh}{h}
\newcommand{\mcalH}{\mathcal{H}}
\newcommand{\mfrakh}{\mathfrak{h}}
\title{Quantum Modular Forms from Real-Quadratic Double Sums}
\author{Kathrin Bringmann}
\author{Caner Nazaroglu}
\address{University of Cologne, Department of Mathematics and Computer Science, Weyertal 86-90, 50931 Cologne, Germany}
\email{kbringma@math.uni-koeln.de}
\email{cnazarog@math.uni-koeln.de}
\begin{document}

\begin{abstract}
In 2015, Lovejoy and Osburn discovered twelve $q$-hypergeometric series and proved that their Fourier coefficients can be understood as counting functions of ideals in certain quadratic fields. In this paper, we study their modular and quantum modular properties and show that they yield three vector-valued quantum modular forms on the group $\GG_0 (2)$.
\end{abstract}

\maketitle

\section{Introduction and statement of results}
Starting with the work of Andrews, Dyson, and Hickerson \cite{ADH} and of Cohen \cite{Co}, a surprising interplay has been uncovered between $q$-hypergeometric series, real-quadratic fields, and classical Maass forms. They discovered this interplay by exploring the now famous function
\begin{equation}\label{eq:sigma_q_hypergeometric}
\s(q) := \sum_{n=0}^\infty \frac{q^{\frac{n(n+1)}{2}}}{(-q;q)_n},
\end{equation}
which was first studied by Ramanujan in his lost notebook \cite{Ram}. Here and throughout $(a;q)_n=(a)_n:=\prod_{j=0}^{n-1}(1-aq^j)$ for $n\in\N_0\cup\{\infty\}$. A key step in the analysis of \cite{ADH} is the use of Bailey chains to rewrite $\s(q)$ as a ``false-indefinite theta function":
\begin{equation*}
\s(q) = \underset{-n\le j\le n}{\sum_{n\geq 0}} (-1)^{n+j}  \left(1-q^{2n+1}\right)q^{\frac{n(3n+1)}{2}-j^2}.
\end{equation*}
This representation allowed the authors of \cite{ADH} to relate the Fourier coefficients of $\s(q)$ to the arithmetic of the real-quadratic field $\Q(\sqrt{6})$ and to prove many interesting properties for these Fourier coefficients, which are not obvious from the combinatorial interpretation of the $q$-hypergeometric series in equation \eqref{eq:sigma_q_hypergeometric}. Through this arithmetic interpretation, it was also possible to extend the Fourier coefficients of $\s (q)$ in
\begin{equation*}
\s (q) =  \sum_{n=0}^\infty T(24n+1)  q^n,
\end{equation*}
to a sequence $T(24n+1)$ defined for all $n \in \Z$. This led the authors of \cite{ADH} and \cite{Co} to a partner $q$-series
\begin{equation*}
\s^* (q) := \sum_{n=1}^\infty T(1-24n) q^n
=
2 \sum_{\substack{m \geq 0 \\ 2k \geq 3m+1}} (-1)^{m+k} q^{k^2 - \frac{m(3m+1)}{2}}
\lp 1 + q^{2(k-m)} \rp,
\end{equation*}
which also has a representation as a $q$-hypergeometric series:
\begin{equation*}
\s^*(q) = 2\sum_{n=1}^\infty \frac{(-1)^nq^{n^2}}{\left(q;q^2\right)_n}.
\end{equation*}
Now the remarkable fact discovered by \cite{Co} is that the Fourier coefficients of $\s(q)$ and $\s^* (q)$ are those of a Maass form defined as
\begin{equation*}
u(\t) := \sqrt{\t_2}\sum_{n\in\Z+\frac{1}{24}} T(24n)K_0(2\pi|n|\t_2) e^{2\pi in\t_1} \quad \mbox{for } \t = \t_1 + i\t_2 \in \H,
\end{equation*}
where $K_\nu$ denotes the $K$-Bessel function of order $\nu$. As reviewed in Section \ref{sec:maass_forms}, Maass forms are invariant under modular transformations and this forms the number theoretical aspect of the three-piece interplay with the combinatorial and algebraic aspects mentioned above.

Building upon these results, Zagier showed in \cite{Za} that\footnote{
The expressions for $\s (q)$ and $\s^* (q)$ as $q$-hypergeometric series are related to each other under the transformation $q \mapsto q^{-1}$ as noted by Cohen (see \cite{Co}). This relation was employed to define the function $f(x)$ through their common limits to the roots of unity as shown.
}
\begin{equation*}
f(x) := e^\frac{\pi i x}{12} \lim_{t \to 0^+} \sigma\left(e^{2\pi i (x+it)}\right)
= -e^{\frac{\pi ix}{12}} \lim_{t \to 0^+} \sigma^*\left(e^{-2\pi i (x-it)}\right)
\end{equation*}
is a so-called quantum modular form, which further elaborates the relation discussed above.
Recall that in the simplest case, {\it a quantum modular form} of weight $k$ and with quantum set $\calQ \subset \Q$ is a function $g:\calQ\to\C$ whose {\it obstruction to modularity}
\begin{equation*}
g(x)-(cx+d)^{-k} g\left(\frac{ax+b}{cx+d}\right) 
\quad \mbox{for }
\mat{a&b\\c&d} \in \GG \subset \SL_2(\Z) 
\end{equation*}
is analytically ``nice", e.g.~it extends real-analytically to $\R \setminus \{-\frac{d}{c}\}$. More generally, one can discuss vector-valued generalizations with nontrivial multiplier systems (as in Theorem \ref{thm:frak_uj_quantum_modular_transformations}) and require stronger analytic properties from the obstructions to modularity such as holomorphicity (as in Proposition \ref{prop:modularity_error_holomorphy} and Remark \ref{rem:error_modularity_holomorphic_extension_cut_plane}).

At this point we should note that the discussion we have had so far is not a peculiar property of the functions $\s(q)$ and $\s^* (q)$.
Since the work of \cite{ADH} and \cite{Co}, a number of generalizations have been investigated in \cite{BK,BLR,CFLZ,Lo}. In fact, the main focus of this paper is on such a generalization developed by Lovejoy and Osburn \cite{LoOs}. They studied twelve $q$-hypergeometric series defined as 
\begin{align*}
L_1(q) &:= \sum_{1\le k\le n} \frac{(-1)^{n+k} (q)_{n-1} q^{\frac{n(n+1)}{2}+\frac{k(k+1)}{2}}}{\left(1-q^{2k-1}\right) (q)_{n-k} (q)_{k-1}},
&& \!\!
L_2(q) := \sum_{0\le k\le n} \frac{(-1)^{n+k} (q)_n q^{\frac{n(n+1)}{2}+\frac{k(k+1)}{2}}}{\left(1-q^{2k-1}\right) (q)_{n-k} (q)_k},\\
L_3(q) &:= q\sum_{1\le k\le n} \frac{(-1)^{n+k} (q)_{n-1} q^{\frac{n(n+1)}{2}+\frac{k(k-1)}{2}}}{\left(1-q^{2k-1}\right) (q)_{n-k} (q)_{k-1}},
&&\!\!
L_4(q) := -1 + \sum_{0\le k\le n} \frac{(-1)^{n+k} (q)_n q^{\frac{n(n+1)}{2}+\frac{k(k-1)}{2}}}{\left(1-q^{2k+1}\right) (q)_{n-k} (q)_k}, 
\\
L_5(q) &:= q\sum_{1\le k\le n} \frac{(-1)^{n+k}(-1)_n(q)_{n-1}q^{n+k^2-k}}{\left(1-q^{2k-1}\right) (q)_{n-k}\left(q^2;q^2\right)_{k-1}},
&&\!\!
L_6(q) := \sum_{1\le k\le n} \frac{(-1)^{n+k}(-1)_n(q)_{n-1}q^{n+k^2}}{\left(1-q^{2k-1}\right) (q)_{n-k}\left(q^2;q^2\right)_{k-1}},\\
L_7(q) &:= 2\sum_{0\le k\le n}\hspace{-.25cm}{\vphantom{\sum}}^* \frac{(-1)^{n+k}\left(q^2;q^2\right)_nq^{k^2+k}}{\left(1-q^{2k+1}\right) (q)_{n-k}\left(q^2;q^2\right)_k},
&&\!\!
L_8(q) := -1+2\sum_{0\le k\le n}\hspace{-.25cm}{\vphantom{\sum}}^* \frac{(-1)^{n+k}\left(q^2;q^2\right)_n q^{k^2}}{\left(1-q^{2k+1}\right)(q)_{n-k}\left(q^2;q^2\right)_k},
\\
L_9(q) &:= \sum_{1\le k\le n} \frac{(-1)^{n+k}(-1)_n(q)_{n-1} q^{n+\frac{k(k+1)}{2}}}{\left(1-q^{2k-1}\right)(q)_{n-k}(q)_{k-1}},
&&\!\!
L_{10}(q) := q\sum_{1\le k\le n} \frac{(-1)^{n+k}(-1)_n(q)_{n-1} q^{n+\frac{k(k-1)}{2}}}{\left(1-q^{2k-1}\right)(q)_{n-k}(q)_{k-1}},\\
L_{11}(q) &:= 2\sum_{0\le k\le n}\hspace{-.25cm}{\vphantom{\sum}}^* \frac{(-1)^{n+k}\left(q^2;q^2\right)_nq^{\frac{k(k+1)}{2}}}{\left(1-q^{2k+1}\right) (q)_{n-k}(q)_k},
&&\!\!
L_{12}(q) := -2+2\sum_{0\le k\le n}\hspace{-.25cm}{\vphantom{\sum}}^* \frac{(-1)^{n+k} \left(q^2;q^2\right)_nq^{\frac{k(k-1)}{2}}}{\left(1-q^{2k+1}\right)(q)_{n-k}(q)_k},
\end{align*}
where the symbol * indicates that we take the average of even and odd partial sums (in $n$) to obtain convergence. They then showed that these functions are also related to the arithmetic of real-quadratic fields. 
In particular, $L_1,\ldots,L_4$ count ideals in the ring of integers $\mathcal{O}_{\Q (\sqrt{2})}$ of the real-quadratic field $\Q (\sqrt{2})$, $L_5,\ldots,L_8$ count ideals in $\mathcal{O}_{\Q (\sqrt{3})}$, and  $L_9,\ldots,L_{12}$ count ideals in $\mathcal{O}_{\Q (\sqrt{6})}$. In this work, we investigate the modular aspect of these functions and prove the following result.

\begin{thm}\label{thm:main_result}
The limits 
\begin{equation*}
\lim_{t \to 0^+} \mat{
e^{-\frac{17 \pi i x}{16}} L_1\left(e^{2 \pi i \left(x+it\right)}\right) \\
e^{\frac{7 \pi i x}{16}} L_2\left(e^{2 \pi i \left(x+it\right)}\right) \\
e^{-\frac{33 \pi i x}{16}} L_3\left(e^{2 \pi i \left(x+it\right)}\right) \\
e^{-\frac{9 \pi i x}{16}} L_4\left(e^{2 \pi i \left(x+it\right)}\right)
}
,\qquad
\lim_{t \to 0^+} \mat{
e^{-\frac{9 \pi i x}{8}} L_9\left(e^{2 \pi i \left(x+it\right)}\right) \\
e^{-\frac{17 \pi i x}{8}} L_{10}\left(e^{2 \pi i \left(x+it\right)}\right) \\
e^{\frac{5 \pi i x}{24}} L_{11}\left(e^{2 \pi i \left(x+it\right)}\right) \\
e^{-\frac{19 \pi i x}{24}} L_{12}\left(e^{2 \pi i \left(x+it\right)}\right) 
},
\end{equation*}
as well as the finite part of 
\begin{equation*}
\mat{
e^{-2 \pi i x} L_5\left(e^{2 \pi i \left(x+it\right)}\right) - \frac{2}{\pi} \arctanh \lp \frac{1}{\sqrt{3}} \rp
\\
e^{- \pi i x} L_6\left(e^{2 \pi i \left(x+it\right)}\right) \\
e^{\frac{\pi i x}{3}} L_7\left(e^{2 \pi i \left(x+it\right)}\right) \\
e^{-\frac{2 \pi i x}{3}} L_8\left(e^{2 \pi i \left(x+it\right)}\right)
}
\quad \mbox{as } t \to 0^+
\end{equation*}
all form vector-valued quantum modular forms over the group $\GG_0 (2)$.
\end{thm}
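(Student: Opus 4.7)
The plan is to express each $L_j$ as a false-indefinite theta series attached to the real-quadratic field identified by Lovejoy--Osburn, and then extract quantum modularity via a completion/obstruction analysis, mirroring the Andrews--Dyson--Hickerson--Cohen--Zagier treatment of $\s(q)$. Concretely, I would first apply Bailey-pair manipulations to rewrite each $L_j(q)$ as a double sum
\[
L_j(q) = \sum_{(n,k) \in S_j} \chi_j(n,k)\, q^{Q_j(n,k)},
\]
where $Q_j$ is a rational quadratic form of signature $(1,1)$ whose discriminant reflects the relevant field ($\Q(\sqrt{2})$, $\Q(\sqrt{3})$, or $\Q(\sqrt{6})$), $S_j\subset\Z^2$ is a cone enforcing the sign condition, and $\chi_j$ is a periodic character. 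The double-sum shapes in the definitions already exhibit this structure, and the ideal-counting interpretation of \cite{LoOs} essentially forces such representations to exist; isolating them is a careful but mechanical exercise.

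Having such representations in hand, the next step is to complete each false theta function to a non-holomorphic modular form on $\GG_0(2)$ with the predicted multiplier, for instance by the Nazaroglu-type prescription in which the sharp sign cutting out $S_j$ is replaced by a (smoother) error function in the negative-direction lattice coordinate. The completion transforms modularly by construction, and the obstruction between the holomorphic false theta series and its completion is a Mordell-type integral
\[
R_j(x) = \int_{0}^{i\infty} \frac{g_j(w)}{\sqrt{-i(w-x)}}\, dw
\]
for a unary theta $g_j$ on the opposite-signature line. Such integrals extend holomorphically to the cut plane $\C\setminus(-\infty,x]$ and in particular real-analytically across $x\in\R$, which is precisely the regularity required of the obstruction to modularity of a quantum modular form.

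The closure of each of the three four-component vectors under $\GG_0(2)$ then reduces to a finite linear-algebra check: once explicit lattice representations are fixed, the action of the generators of $\GG_0(2)$ on the theta completions is computed by Poisson summation and permutes the components linearly. The fractional weights $-\tfrac{17}{16}, \tfrac{7}{16}, \ldots$ and $\tfrac{5}{24}, -\tfrac{19}{24}$ should emerge as the smallest denominators clearing $Q_j$ on the relevant cosets, consistent with the fact that the arithmetic of the Fourier coefficients is already encoded in the Maass forms of Section \ref{sec:maass_forms}.

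The main obstacle I anticipate is the $L_5$ case. The explicit subtraction of $\tfrac{2}{\pi}\arctanh(\tfrac{1}{\sqrt{3}})$ signals that the limit $t\to 0^+$ of $e^{-2\pi i x}L_5(e^{2\pi i(x+it)})$ genuinely diverges, so the ``finite part'' prescription has to be matched against a regularization of the Mordell integral for $L_5$. The arctanh constant should arise as a boundary term from the cuspidal piece of the $L_5$ completion, analogous to (but more subtle than) the constant shifts $-1$ and $-2$ that appear in the definitions of $L_4$, $L_8$, $L_{12}$. Tracking this renormalization carefully through the modular transformation, and confirming that the regularized obstruction still possesses a real-analytic extension, will be the technically most delicate step; the remaining eleven functions should fall in line with the scheme above.
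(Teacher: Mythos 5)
There is a genuine gap, and it sits at the heart of your step 2. The double sums here rewrite as \emph{false-indefinite} theta functions of the shape $\sum_{\bm n}(1-\sgn(B(\bm n,\bm{c_1}))\sgn(B(\bm n,\bm{c_2})))q^{Q(\bm n)}$, not as indefinite theta functions of the shape $\sum_{\bm n}(\sgn(B(\bm n,\bm{c_1}))-\sgn(B(\bm n,\bm{c_2})))q^{Q(\bm n)}$. The error-function completion you invoke (replacing the sharp signs by smooth sign-like functions in the negative direction) is the completion of the \emph{latter} type and produces mock modular forms whose obstruction is a Mordell/Eichler integral of a unary theta; it cannot produce the $(1-\sgn\cdot\sgn)$ combination as its holomorphic part, since that combination is even in $\bm n$ and supported inside the positive cone. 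These hybrids are instead governed by Zwegers' \emph{mock Maass} theta functions: the completion is $\sqrt{\t_2}\sum_{\bm n}q^{Q(\bm n)}\int_{t_1}^{t_2}e^{-\pi B(\bm n,\bm{c(t)})^2\t_2}\,dt$, the relevant objects are eigenfunctions of the hyperbolic Laplacian with eigenvalue $\frac14$ (Maass forms, as in Cohen's treatment of $\s(q)$, not holomorphic or mock modular forms), and the obstruction to modularity is a Lewis--Zagier period integral $\frac2\pi\int_{\varrho}^{i\infty}[U_j(z),R_\t(z)]$ of the Maass form, which is what extends holomorphically to a cut plane. Your Mordell-integral regularity argument therefore applies to the wrong object.

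A second, related omission: in the correct framework the completion is \emph{not} ``modular by construction'' in a way that comes for free. For a general coset $\bm\mu$ the mock Maass theta function differs from its modular completion by shadow terms $\varphi_{\bm\mu}^{[\bm{c_1}]}-\varphi_{\bm\mu}^{[\bm{c_2}]}$, and the decisive step is to verify that these cancel for the specific linear combinations $\mt_{\bm{\mu_j}}+\mt_{\bm{\mu_j}+\bm\l}$ arising from $L_1,\dots,L_{12}$; this is done by exhibiting automorphisms $\g$ of each quadratic form with $\g^TA\g=A$, $\det(\g)=-1$, fixing the $\bm{c_i}$ up to sign and permuting the cosets appropriately. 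Without this cancellation there is no Maass form and no quantum modularity, and your proposal contains no analogue of it. Your instinct about $L_5$ is partially right --- the divergence does come from a constant (non-cuspidal) term --- but note that it is the $\bm n=\bm 0$ contribution $c_j\sqrt{\t_2}$ of the Maass form, producing a $\frac{\g_{j,x}}{\pi t}$ blow-up at every relevant rational, which must be subtracted before the limit exists; the $\frac{2}{\pi}\arctanh(\frac{1}{\sqrt3})$ in the statement is the separate finite constant $\frac{t_2-t_1}{\pi}$ relating $q^{-1}L_5(q)$ to the normalized theta function, and conflating the two would derail the transformation law.
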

More precise statements are given in Propositions \ref{prop:L1_4_quantum_modular}, \ref{prop:L5_8_quantum_modular}, and \ref{prop:L9_12_quantum_modular}. The quantum modular forms $\mfrakf_j$, $\mfrakg_j$, and $\mfrakh_j$ referenced in these propositions are related to the functions $L_j$ given above by Lemmas \ref{lem:l1l4rew}, \ref{lem:l5l8rew}, \ref{lem:l9l12rew} and equations \eqref{eq:L1_4_quantum_limit_defn}, \eqref{eq:L5_8_quantum_limit_defn}, and \eqref{eq:L9_12_quantum_limit_defn}.

To prove these results, we relate the functions $L_j$ to Maass waveforms with the technology of mock Maass theta functions developed by Zwegers in \cite{Zw}. These objects give a rare and precious glimpse into the behavior of false-indefinite theta functions under modular transformations. As reviewed in Figure \ref{fig:mock_maass_diagram}, mock Maass theta functions are certain theta functions that are eigenfunctions of the hyperbolic Laplacian that are in general not modular. Their construction ensures that they give rise to false-indefinite theta functions under certain ``Eichler-type integrals'' following the work of Lewis and Zagier \cite{LZ}. The one-form appearing in the integral is closed thanks to the hyperbolic Laplacian eigenfunction property of the mock Maass theta function. If it is further true that the mock Maass theta function is modular invariant, then we can find the obstruction to modularity for the corresponding false-indefinite theta function as a period function of the aforementioned one-form. The question of modularity, on the other hand, can be studied through Zwegers' modular completions for these mock Maass theta functions. In the special cases where the completing ``shadow contributions" cancel each other thanks to symmetry, the mock Maass theta function becomes a Maass waveform itself and the modular properties of the false-indefinite theta function then follows.

\begin{figure}[h!]
 \vspace{-8pt}
  \centering
    \includegraphics[scale=0.27]{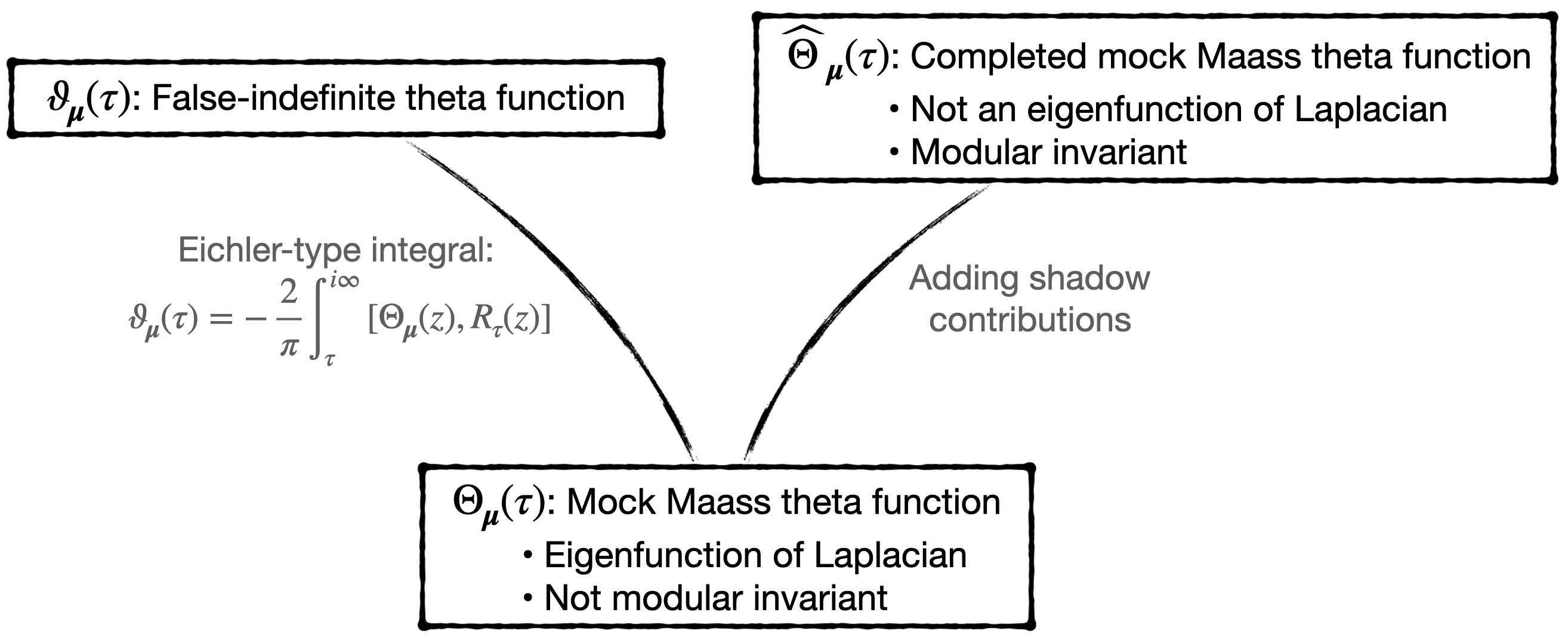}
     \vspace{-10pt}
    \caption{}
    \label{fig:mock_maass_diagram}
\end{figure}

If the functions $L_j$ are rewritten as false-indefinite theta functions, such cancellations indeed occur as we show below and the corresponding mock Maass theta functions are modular.
That in turn implies corresponding modular properties for the functions $L_j$ on $\H$ as discussed above. For the case of $L_1,\dots,L_4$ and $L_9,\dots,L_{12}$, these Maass forms do not have constant terms in their Fourier expansion and the arguments of Zagier \cite{Za} immediately apply to give the quantum modularity results stated above. However, for the functions $L_5,\dots,L_8$, the corresponding vector-valued Maass form does have a nonvanishing constant term and as a result the corresponding $L_j(e^{2\pi i(x+it)})$'s have divergent pieces as $t \to 0^+$ depending on $j$ and $x$. So a novel technical aspect of this work is the handling of these divergent pieces to show that the remaining finite pieces lead to quantum modular forms.

This paper is organized as follows. In Section \ref{sec:maass_forms}, we review and expand on the relation between Maass forms and quantum modular forms, with a particular emphasis on the aspects needed if the Maass form in question has nonvanishing constant terms. In Section \ref{sec:mock_maass_theta_functions}, we review the work of Zwegers on mock Maass theta functions and their relation to false-indefinite theta functions that appear in this work. Then in Sections \ref{sec:L1_L4_analysis}, \ref{sec:L5_L8_analysis}, \ref{sec:L9_L12_analysis}, we analyze the functions $L_1,\dots,L_4$, $L_5,\dots,L_8$, and $L_9,\dots,L_{12}$, respectively, and prove the results asserted in Theorem \ref{thm:main_result}. Finally, in an appendix we display various numerical results that exemplify our discussion in the body of this paper.

\section*{Acknowledgements}
The authors were supported by the SFB/TRR 191 “Symplectic Structure in Geometry, Algebra and Dynamics”, funded by the DFG (Projektnummer 281071066 TRR 191)
and by the Deutsche Forschungsgemeinschaft (DFG) Grant No. BR 4082/5-1.

\section{Maass Forms and Quantum Modular Forms}\label{sec:maass_forms}
In this section we consider quantum modular forms related to Maass forms. First we recall the definition of a (vector-valued) Maass form.
\begin{defn}\label{defn:Maass_form}
A set of smooth functions $U_j: \H \to \C$ with $j \in\{1,\dots,N\} $ is called \textit{a vector-valued Maass (wave) form} for the group $\GG\subset\SL_2(\Z)$ if it satisfies the following conditions:
\begin{enumerate}[leftmargin=15pt,label=\arabic*)]
\item For all $M=\pmat{a&b\\b&c} \in \GG$ we have 
\begin{equation}\label{eq:U_modular_transformation}
U_j \left(\frac{a\t+b}{c\t+d}\right) = \sum_{k=1}^N \Psi_M(j,k)  U_k(\t),
\end{equation}
where $\Psi_M$ is a suitable multiplier system.\footnote{Here and throughout we assume that $\Psi_M$ is diagonal on $\GG_{\infty}:= \GG \cap \{ \pm \pmat{1&n\\0&1}: \, n\in \Z\}$.}

\item There exists a $\l \in \C$ such that $\Delta \left(U_j\right) = \l U_j$ for each $j$, where $\Delta:=-\t_2^2(\frac{\del^2}{\del\t_1^2}+\frac{\del^2}{\del\t_2^2})$ is the hyperbolic Laplace operator on $\H$.

\item The functions $U_j$ have at most polynomial growth near the cusps.
\end{enumerate}
\end{defn}
The Maass forms $U_j$ with Laplace eigenvalue $\l = \frac{1}{4} - \nu^2$ (where $\nu \in \C$) have a Fourier expansion of the form
\begin{equation*}
U_j (\t) = \sum_{n\in\Z+\a_j} a_j(\t_2;n)  e^{2\pi in\t_1},
\end{equation*}
where, for some constants $d_j(n),b_j,c_j$,
\begin{equation*}
a_j(\t_2;n) = d_j(n)\sqrt{\t_2}K_\nu(2\pi|n|\t_2) \text{ if } n\ne0 \andd a_j(\t_2;0) =
\begin{cases}
b_j\log(\t_2)\sqrt{\t_2}+c_j\sqrt{\t_2} &\text{if }\nu=0,\\
b_j\t_2^{\frac12-\nu}+c_j\t_2^{\frac12+\nu} &\text{if }\nu\ne0.\\
\end{cases}
\end{equation*}

In this paper, our interest is on Maass forms with Laplace eigenvalue $\frac{1}{4}$. More specifically, we study functions $U_j:\H\to\C$, $j\in\{1,\dots,N\}$ forming a vector-valued Maass form for $\GG\subset\SL_2(\Z)$ with Fourier expansion
\begin{equation}\label{eq:U_fourier_expansion}
U_j(\t) = c_j \sqrt{\t_2} + \sqrt{\t_2} \sum_{\substack{n\in\Z+\a_j\\n\ne0}} d_j(n)K_0(2\pi|n|\t_2) e^{2\pi in\t_1},
\end{equation}
where the coefficients $d_j(n)$ have polynomial growth in $n$. Following \cite{LZ} we then define\footnote{Throughout this paper we use the principal branch of the logarithm to define the square-roots.}
\begin{equation*}
R_\t(z) := \frac{\sqrt{z_2}}{\sqrt{(z-\t)\left(\bar{z}-\t\right)}}
\andd
[U_j(z),R_\t(z)] := \del U_j(z) R_\t(z)  dz + U_j(z) \bar{\del} R_\t(z) d\bar{z},
\end{equation*}
where $\del f(z) := \frac{\del f(z)}{\del z}$ and $\bar{\del} f(z) := \frac{\del f(z)}{\del \bar{z}}$. For each $j$, the one-form $[U_j(z),R_\t(z)]$ is closed thanks to the fact that both $R_\t$ and $U_j$ have eigenvalue $\frac{1}{4}$ under the Laplacian. Using these closed one-forms, we define $u_j:\H\to\C$ and $\mathcal{U}_{j,\varrho}:\C \setminus \lp \varrho + i \R \rp \to\C$ by\footnote{We also define $\mathcal{U}_{j,i\infty}(\t):=0$.}${}^{,}$\footnote{
The latter function is a period function that can be analytically continued in $\tau$ to a cut complex plane and that satisfies a functional equation under modular transformations. The correspondence between Maass cusp forms and their period functions was first elucidated by Lewis and Zagier in \cite{LZ}, where they also recognized its role similar to that of period polynomials for holomorphic cusp forms.
}
\begin{equation}\label{eq:defn_u_and_error}
u_j(\t) := -\frac2\pi\int_\t^{i\infty} [U_j(z),R_\t(z)]
\andd
\mathcal{U}_{j,\varrho}(\t) := \frac2\pi\int_{\varrho}^{i\infty} [U_j(z),R_\t(z)],
\end{equation}
for any
\begin{equation}\label{eq:Qgammma_definition}
\varrho \in \mathcal{Q}_\GG := \left\{ x \in \Q : \ \mbox{there exists } M_x := \mat{a_x&b_x\\c_x&d_x}\in \GG \mbox{ with } x = -\frac{d_x}{c_x} \right\} .
\end{equation}
The integrals are independent of the integration path, but for concreteness we assume throughout that the integrals are taken along vertical paths. Now the results of Lewis and Zagier in \cite{LZ} and of Zagier in \cite{Za} show that if the $U_j$'s form a Maass form with $c_j = 0$ for all $j$, then the limits $\lim_{t \to 0^+} u_j (x+it)$ exist for all $x \in \mathcal{Q}_\GG$ and they form a quantum modular form for $\GG$.

Among the functions we study, $L_1,\dots,L_4$ and $L_9,\dots,L_{12}$ indeed fit into this framework. The functions $L_5,\dots,L_8$, on the other hand, are related to a vector-valued Maass form with a nontrivial constant term. So in this section we develop the technical details for how the results of \cite{LZ} and \cite{Za} extend to this case.\footnote{In \cite{LZ}, a discussion of noncuspidal $\SL_2(\Z)$ Maass forms was given (see equations (4.4) and (4.5) there with $s=\frac12-\nu$). However, note that the constant term $b_j\t_2^{\frac12-\nu}+c_j\t_2^{\frac12+\nu}$ degenerates for $\nu=0$, which is the case of interest in this paper, and we have not found an interpretation for the constant term given in (4.5) of \cite{LZ} (that would correspond to the constant term of $u_j$) to reproduce our result in Proposition \ref{prop:uj_fourier_expansion}.}

We start with an elementary result that relates the Maass waveforms discussed here to $q$-series.
\begin{prop}\label{prop:uj_fourier_expansion}
The functions $u_j$ for $j\in\{1,\dots,N\}$ are holomorphic on $\H$ and they satisfy
\begin{equation*}
u_j(\t) = -\frac{c_j}{\pi} + \sum_{\substack{n\in\Z+\a_j\\n>0}} d_j(n)  q^n.
\end{equation*}
\end{prop}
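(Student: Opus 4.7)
The strategy is to prove holomorphy of $u_j$ on $\mathbb{H}$ and then extract the $q$-expansion by substituting the Fourier series of $U_j$ into \eqref{eq:defn_u_and_error} and integrating term by term.

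For holomorphy, I parametrize the integration path as $z = \tau + is$ with $s \in [0,\infty)$, so that
\begin{equation*}
u_j(\tau) \;=\; -\tfrac{2}{\pi}\int_0^\infty \tilde G(s,\tau,\bar\tau)\, ds,\qquad
\tilde G \;:=\; i\bigl[\partial U_j\cdot R_\tau - U_j\cdot \bar\partial R_\tau\bigr]_{z=\tau+is}.
\end{equation*}
Since $R_\tau$ has no explicit $\bar\tau$-dependence and $z = \tau+is$ is holomorphic in $\tau$, differentiating in $\bar\tau$ reaches $\tilde G$ only through the $\bar z = \bar\tau - is$ slot, so $\bar\partial_\tau \tilde G = (\bar\partial_z \tilde G)_{z=\tau+is}$. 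The closedness of $[U_j,R_\tau]$, equivalently $\bar\partial(\partial U_j R_\tau) = \partial(U_j\bar\partial R_\tau)$, then yields $\bar\partial_z \tilde G = i(\partial-\bar\partial)(U_j\bar\partial R_\tau) = \partial_y(U_j\bar\partial R_\tau)$, which along the path is exactly $\partial_s(U_j\bar\partial R_\tau)$. Hence $\bar\partial_\tau u_j$ collapses to the boundary values of $U_j\bar\partial R_\tau$ at $s=0$ and $s=\infty$. Using the explicit formula $\bar\partial R_\tau = i(z-\tau)^2/(4\sqrt{y}\,A^{3/2})$ with $A=(z-\tau)(\bar z-\tau)$, one has $\bar\partial R_\tau = O(s^{1/2})$ as $s\to 0^+$ and $\bar\partial R_\tau = O(s^{-3/2})$ as $s\to\infty$; combined with $U_j = O(\sqrt s)$ at infinity (the worst case, from the constant term $c_j\sqrt{\tau_2}$), both boundary terms vanish and $\bar\partial_\tau u_j \equiv 0$.

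To extract the Fourier coefficients I substitute \eqref{eq:U_fourier_expansion} and exchange sum and integral, which is justified by the exponential decay of $K_0$ together with the polynomial bound on $d_j(n)$. Each nonzero Fourier mode $\sqrt y K_0(2\pi|n|y)e^{2\pi inx}$ contributes $q^n$ to $u_j$ for $n>0$ and $0$ for $n<0$: this is the standard Lewis--Zagier identity \cite{LZ}, verifiable along the same vertical path by reducing to Bessel integrals of the form $\int_0^\infty \cosh\xi\,[K_0+K_1](2\pi n\tau_2\cosh\xi)\,d\xi$ under $s=\tau_2(\cosh\xi-1)$. The genuinely new ingredient is the contribution of the constant term $c_j\sqrt y$, which vanishes in the cuspidal setting. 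Using $\partial\sqrt y = -i/(4\sqrt y)$ together with the formula for $\bar\partial R_\tau$ above, the two pieces of $[\sqrt y,R_\tau]$ combine along the path to
\begin{equation*}
[\sqrt y,R_\tau]\,\big|_{z=\tau+is}
\;=\; \left(\frac{1}{4\sqrt{s(2\tau_2+s)}} - \frac{\sqrt s}{4(2\tau_2+s)^{3/2}}\right)ds
\;=\; \frac{\tau_2}{2\sqrt s\,(2\tau_2+s)^{3/2}}\,ds,
\end{equation*}
and the substitution $s = 2\tau_2 u^2$ evaluates the integral to $\tfrac12\int_0^\infty(1+u^2)^{-3/2}\,du = \tfrac12$, giving the contribution $-c_j/\pi$ to $u_j$.

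The main technical subtlety is this constant-term cancellation: the two individual summands each have $\tfrac{1}{4s}$ tails at $s=\infty$ and so are not separately integrable there, yet the specific combination $[\sqrt y,R_\tau]$ decays like $\tau_2/(2s^2)$ and yields the finite, $\tau$-independent value $\tfrac12$. Tracking this cancellation is precisely the ingredient that extends the cuspidal Lewis--Zagier framework to the non-cuspidal Maass forms arising for $L_5,\dots,L_8$ later in the paper.
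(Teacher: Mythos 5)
Your proposal is correct and follows essentially the same route as the paper, which simply cites Lewis--Zagier (and Proposition 3.5 of \cite{LNR}) for the cuspidal modes and declares the constant-term contribution a ``straightforward computation''; you have supplied exactly that computation, and it checks out (the identity $\bar\partial R_\t = \frac{i(z-\t)^2}{4\sqrt{z_2}((z-\t)(\bar z-\t))^{3/2}}$, the combination $\frac{\t_2}{2\sqrt{s}(2\t_2+s)^{3/2}}\,ds$ along the vertical path, and the value $\tfrac12$ of the resulting integral all verify, giving the $-\frac{c_j}{\pi}$ term). Your holomorphy argument via closedness of the one-form and vanishing boundary terms is likewise the standard mechanism underlying the cited results.
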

\begin{proof}
This basically follows from the results of \cite{LZ}. A detailed exposition for the case $c_j = 0$ can be found in Proposition 3.5 of \cite{LNR}. The contribution of the constant terms $c_j$ follow from a straightforward computation.
\end{proof}

Next we note the analytic properties of the functions $\mathcal{U}_{j,\varrho}$.
\begin{prop}\label{prop:modularity_error_holomorphy}
For $j \in  \{1,\dots,N\}$ and $\varrho \in  \mathcal{Q}_\GG$, the integral defining $\mathcal{U}_{j,\varrho}(z)$ is convergent for all $z \in \C \setminus (\varrho + i \R)$ and defines a holomorphic function there.
\end{prop}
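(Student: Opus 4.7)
The plan is to break the proof into (i) convergence of the integral at the two endpoints $\varrho$ and $i\infty$, and (ii) holomorphy of the resulting function on $\C \setminus (\varrho + i\R)$. The central structural fact is that $[U_j(z), R_\tau(z)]$ is a closed one-form, thanks to $U_j$ and $R_\tau$ both being eigenfunctions of $\Delta$ with eigenvalue $\tfrac14$.

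For convergence near $z = i\infty$, I would split $U_j(z) = c_j \sqrt{z_2} + \sqrt{z_2} \sum_{n \ne 0} d_j(n) K_0(2\pi |n| z_2) e^{2\pi i n z_1}$ as in \eqref{eq:U_fourier_expansion}. Along the vertical path $z = \varrho + iy$, the function $R_\tau(z)$ behaves like $1/\sqrt{y}$ at large $y$ and its $\bar\partial$ derivative like $y^{-3/2}$. The Bessel terms then produce an integrand decaying exponentially in $y$, uniformly in $\tau$ on compact subsets of $\C \setminus (\varrho + i\R)$. For the constant-term piece $[c_j \sqrt{z_2}, R_\tau(z)]$, a direct computation along the vertical line shows that the leading $c_j/y\, dy$ contributions from $\partial(\sqrt{z_2}) R_\tau\, dz$ and $\sqrt{z_2}\, \bar\partial R_\tau\, d\bar z$ cancel exactly, leaving an $O(y^{-3/2})$ remainder. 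This is the Wronskian-type cancellation typical of two eigenfunctions with a common Laplace eigenvalue, and is a shadow of the closedness of the bracket.

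For convergence near $z = \varrho$, I would use the existence of $M_\varrho = \pmat{a_\varrho & b_\varrho \\ c_\varrho & d_\varrho} \in \GG$ with $c_\varrho \varrho + d_\varrho = 0$ (guaranteed by $\varrho \in \mathcal{Q}_\GG$) and apply the modular transformation \eqref{eq:U_modular_transformation} to express $U_j(z)$ as a linear combination of the $U_k(M_\varrho z)$. A short computation gives $M_\varrho(\varrho + it) = a_\varrho/c_\varrho + i/(c_\varrho^2 t)$, which has large imaginary part as $t \to 0^+$. The Fourier expansion of each $U_k$ then reduces the near-$\varrho$ analysis to the same structure as at $i\infty$: exponentially decaying Bessel terms plus a constant-term piece exhibiting the analogous cancellation. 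Equivalently, one can change variables in the integral by $M_\varrho$, which sends the endpoint $\varrho$ to $i\infty$ and reduces this step directly to the previous case.

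For holomorphy in $\tau$, one checks that $R_\tau(z)$ is holomorphic in $\tau$ on $\C \setminus \{z, \bar z\}$ once a consistent branch of $\sqrt{(z-\tau)(\bar z - \tau)}$ has been chosen on $\C \setminus (\varrho + i\R)$. Since the estimates above are uniform on compact subsets of $\C \setminus (\varrho + i\R)$ and the integrand is holomorphic in $\tau$, standard Morera-plus-Fubini reasoning yields holomorphy of $\mathcal{U}_{j,\varrho}$. The main obstacle I anticipate is keeping the constant-term cancellations clean: both near $\varrho$ and near $i\infty$ the $c_j \sqrt{z_2}$ piece sits exactly at the threshold of integrability, and obtaining uniform bounds in $\tau$ (especially after the $M_\varrho$ change of variables) requires careful bookkeeping of branch choices and matrix coefficients.
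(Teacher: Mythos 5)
Your proposal is correct and follows essentially the same route as the paper: split the integral at the two endpoints, use the exponential decay of the Bessel terms (after the modular transformation by $M_\varrho$ near the endpoint $\varrho$), and treat the constant term $c_j\sqrt{z_2}$ separately, where the leading $O(1/y)$ contributions of $\partial U_j\, R_\tau\, dz$ and $U_j\, \bar\partial R_\tau\, d\bar z$ cancel at both endpoints. The cancellation you describe is exactly the content of the paper's remark that the constant-term contribution ``can be separately checked to be convergent,'' and your bound $O(y^{-3/2})$ on the remainder (in fact one gets $O(y^{-2})$) suffices.
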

\begin{proof}
Again this result basically follows from \cite{LZ}. Here we give details both for reference and also to point out the new ingredients that appear in the presence of constant Fourier coefficients. We start by writing 
\begin{equation*}
\calU_{j,\varrho}(\t) = \frac{1}{2\pi}\int_0^\infty \left(4it\del U_j(\varrho+it)-\frac{t+i(\t-\varrho)}{t-i(\t-\varrho)}U_j(\varrho+it)\right) \frac{dt}{\sqrt{t}\sqrt{t^2+(\t-\varrho)^2}}.
\end{equation*}
Then letting $\varrho=-\frac dc$ for $M = \pmat{a & b \\ c & d} \in \GG$, we can use the modular transformations \eqref{eq:U_modular_transformation} to write (for any $T >0$)
\begin{align}
\label{eq:error_modularity_two_integral}
&\mathcal{U}_{j,\varrho}(\t) = \frac{1}{2\pi} \int_{T}^{\infty} \lp
4it  \del U_j \lp -\frac{d}{c}+it \rp
-  \frac{t + i \lp \t+\frac{d}{c} \rp}{t - i \lp \t +\frac{d}{c} \rp}
  U_j \lp -\frac{d}{c} + i t \rp  
\rp \frac{dt}{\sqrt{t} \sqrt{ t^2 + \lp \t +\frac{d}{c} \rp^2 } }
\\
&
-\frac{1}{2\pi} \sum_{k=1}^N \Psi_{M^{-1}} (j,k)  \int_{0}^{T} \lp
\frac{4i}{c^2t} \del U_k \lp \frac{a}{c} + \frac{i}{c^2 t} \rp
+ \frac{t + i \lp \t+\frac{d}{c} \rp}{t - i \lp \t +\frac{d}{c} \rp}
 U_k \lp \frac{a}{c} + \frac{i}{c^2 t} \rp
\rp \frac{dt}{\sqrt{t} \sqrt{ t^2 + \lp \t +\frac{d}{c} \rp^2 } }.
\notag
\end{align}
Now looking at the Fourier expansion in
\eqref{eq:U_fourier_expansion}, we find that for any $\varepsilon > 0$ we have constants $C_\varepsilon, D > 0$ such that for all $\t_2 \geq \varepsilon$ and for all $j \in \{1,\ldots,N\}$ we have 
\begin{equation}\label{eq:Uj_delUj_bounds1}
\left| U_j (\t) - c_j \sqrt{\t_2} \right|, \quad 
\left| 4 i \del U_j (\t) - \frac{c_j}{\sqrt{\t}}  \right| \leq C_\varepsilon e^{-D \t_2} 
\end{equation}
thanks to the exponential decay of $K$-Bessel functions towards infinity. So separating the constant terms of $U$ and $\del U$'s, it is easy to see that their contribution to \eqref{eq:error_modularity_two_integral} is convergent and yields a holomorphic function. The contribution of the constant terms can then be separately checked to be convergent and to be holomorphic for all $z \in \C \setminus (\varrho + i \R)$.
\end{proof}

\begin{rem}\label{rem:error_modularity_holomorphic_extension_cut_plane}
Here it is also useful to note that
by deforming the path of integration and the cut associated with the square-roots as in Figure \ref{fig:obstruction_modularity_continuation},
one can analytically continue $\mathcal{U}_{j,\varrho}(z)$ from the half-plane $\mathrm{Re} (z) > \varrho$ to the entire cut-plane $\C \setminus (-\infty, \varrho]$.\footnote{Similarly we can continue $\mathcal{U}_{j,\varrho}(z)$ from the half-plane $\mathrm{Re} (z) < \varrho$ to the entire cut-plane $\C \setminus [\varrho, \infty)$.} 
\end{rem}
\begin{figure}[h!]
 \vspace{-10pt}
  \centering
    \includegraphics[scale=0.20]{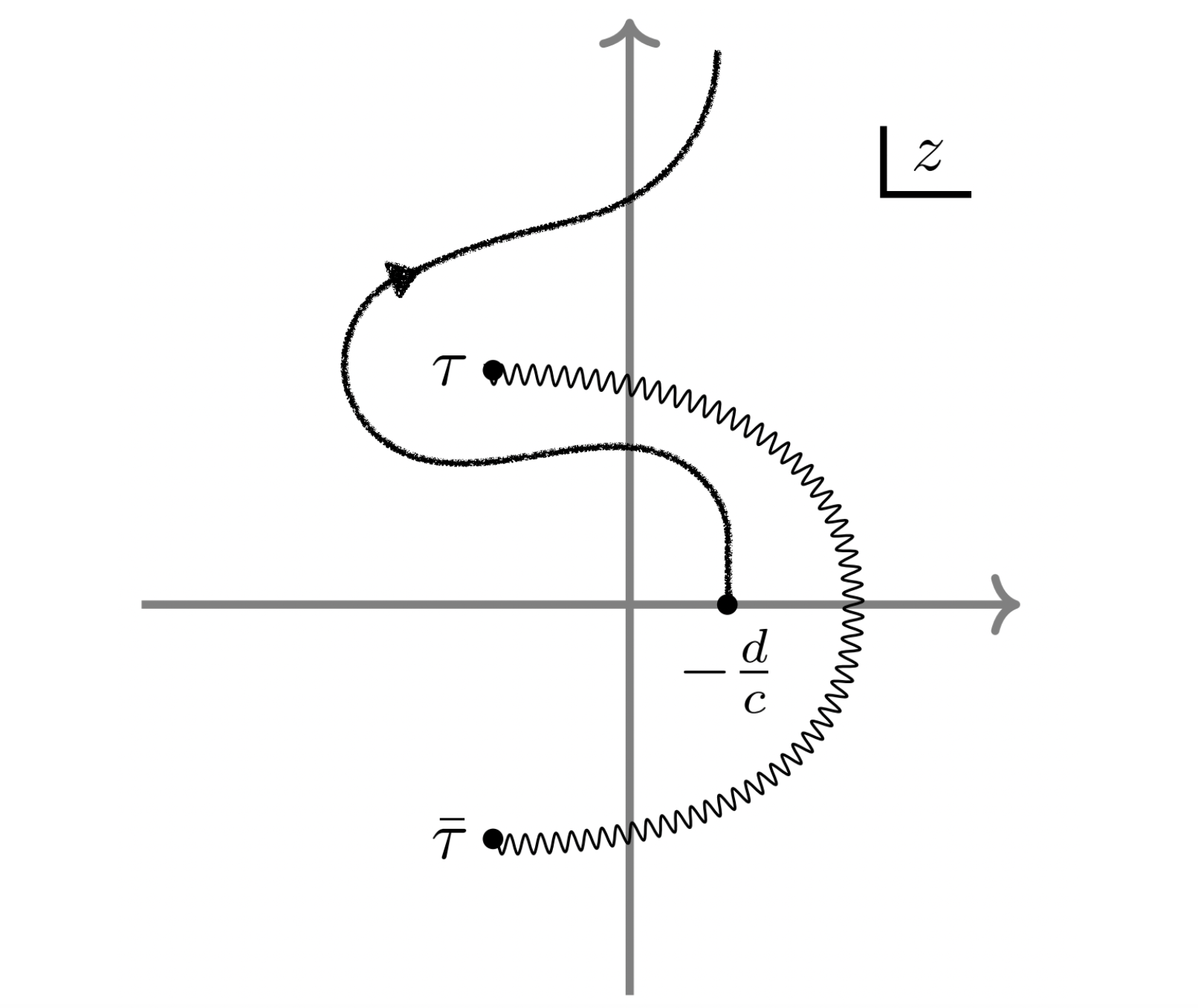}
     \vspace{-10pt}
    \caption{}
    \label{fig:obstruction_modularity_continuation}
\end{figure}

Now we are ready to state the modular transformations of $u_j$ on $\H$.
\begin{prop}\label{prop:uj_modularity}
For $M:=\pmat{a&b\\c&d}\in\GG$ and $\t\in\H$ with $\t_1\ne-\frac dc$ we have
\begin{equation*}
u_j\left(\frac{a\t+b}{c\t+d}\right) =  \sgn(c\t_1+d)(c\t+d) \sum_{k=1}^N \Psi_M(j,k)\left(u_k(\t)+\calU_{k,-\frac dc}(\t)\right).
\end{equation*}
\end{prop}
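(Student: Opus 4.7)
The plan is to follow the Lewis–Zagier contour deformation strategy adapted to the present vector-valued setting. Starting from
\[
u_j(M\t) = -\frac{2}{\pi}\int_{M\t}^{i\infty}[U_j(z),R_{M\t}(z)],
\]
I would perform the change of variables $z = Mw$. The point $M\t$ is mapped back to $\t$, while the endpoint $i\infty$ is the image under $M$ of $M^{-1}(i\infty) = -d/c$, so the new contour runs from $\t$ to $-d/c$. Using the Maass transformation law \eqref{eq:U_modular_transformation}, the factor $U_j(Mw)$ produces $\sum_{k}\Psi_M(j,k)U_k(w)$, and this is what yields the multiplier in the statement.

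The next step is to check that the one-form $[U_j(z),R_\t(z)]$ transforms essentially equivariantly. Using the identities
\[
(Mw)_2=\frac{w_2}{|cw+d|^2},\qquad Mw-M\t=\frac{w-\t}{(cw+d)(c\t+d)},\qquad \overline{Mw}-M\t=\frac{\bar w-\t}{(c\bar w+d)(c\t+d)},
\]
one sees that $R_{M\t}(Mw)$ equals $R_\t(w)$ up to principal-branch square roots of the automorphy factor $(c\t+d)^2$. Combined with the change of $dz$ and $\partial_z$ under $z=Mw$, this produces a factor of $(c\t+d)$, while the sign ambiguity in $\sqrt{(c\t+d)^2}$ is precisely $\sgn(c\t_1+d)$. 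This is where the sign in the statement originates.

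Once the integrand is rewritten as the pullback of $[U_k(w),R_\t(w)]$ with the correct prefactor, I would split the path from $\t$ to $-d/c$ by inserting the point $i\infty$:
\[
\int_\t^{-d/c} = \int_\t^{i\infty}-\int_{-d/c}^{i\infty}.
\]
The closedness of $[U_k(\cdot),R_\t(\cdot)]$, which is guaranteed by the eigenvalue $\frac14$ of the Laplacian on both $U_k$ and $R_\t$, makes this deformation admissible provided we avoid the branch cuts of the square roots. Identifying the two resulting pieces with $u_k(\t)$ and $\mathcal{U}_{k,-d/c}(\t)$ via the definitions in \eqref{eq:defn_u_and_error} produces the asserted identity. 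The condition $\t_1\ne -d/c$ ensures that the path from $\t$ to $i\infty$ stays in a half-plane where $\mathcal{U}_{k,-d/c}$ is already defined, avoiding the cut $-d/c+i\R$ noted in Proposition \ref{prop:modularity_error_holomorphy} and Remark \ref{rem:error_modularity_holomorphic_extension_cut_plane}.

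The most delicate step will be tracking the interplay between the principal-branch conventions for the square roots and the sign $\sgn(c\t_1+d)$, together with the contribution of the constant Fourier coefficient $c_j$. The decay estimates \eqref{eq:Uj_delUj_bounds1} guarantee convergence and justify the contour deformations, but the non-vanishing of the $c_j\sqrt{\t_2}$ piece means that the $R_\t$-integrals receive a slowly decaying contribution whose transformation under $z\mapsto Mz$ must be handled separately, much as in the proof of Proposition \ref{prop:modularity_error_holomorphy}. Once one checks that these constant-term pieces assemble correctly into the same automorphy factor, the proposition follows.
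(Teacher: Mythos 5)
Your proposal follows essentially the same route as the paper's proof: the change of variables $z\mapsto Mz$ turning the contour into one from $\t$ to $-\frac dc$, the equivariance of the one-form $[U_j,R_\t]$ up to a sign coming from the principal-branch square roots, and the splitting $\int_\t^{-d/c}=\int_\t^{i\infty}-\int_{-d/c}^{i\infty}$. The one step you defer --- showing that the $z$-dependent sign factor in $R_{M\t}(Mz)=\chi_M(\t,z)(c\t+d)R_\t(z)$ is actually constant along the integration path and equals $\sgn(c\t_1+d)$ --- is precisely where the paper invests its effort (by locating the discontinuity cuts of $\chi_M(\t,\cdot)$ away from the hyperbolic geodesic and evaluating at $z=-\frac dc$), and your separate worry about the constant term $c_j\sqrt{\t_2}$ is not needed here, since the identity is exact on $\H$ and convergence of $\calU_{k,-\frac dc}$ is already supplied by Proposition \ref{prop:modularity_error_holomorphy}.
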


\begin{proof}
The statement is trivial for $c=0$, so we assume that $c\ne0$. We start with the definition in equation \eqref{eq:defn_u_and_error} for $u_j(\frac{a\t+b}{c\t+d})$ and make a change of variable $z\mapsto\frac{az+b}{cz+d}$ to write
\begin{equation*}
u_j\left(\frac{a\t+b}{c\t+d}\right) = -\frac2\pi\int_\t^{-\frac dc} \left[U_j\left(\frac{az+b}{cz+d}\right),R_{\frac{a\t+b}{c\t+d}}\left(\frac{az+b}{cz+d}\right)\right],
\end{equation*}
where the integral is taken over a piece of hyperbolic geodesic from $\t$ to $-\frac dc$. Now we note that
\begin{equation*}
R_\frac{a\t+b}{c\t+d}\left(\frac{az+b}{cz+d}\right) = \chi_M(\t,z)(c\t+d)R_\t(z),
\end{equation*}
where
\begin{equation*}
\chi_M(\t,z) := \sqrt{\frac{(c\t+d)^2}{(z-\t)\left(\ol z-\t\right)}} \frac{\sqrt{(z-\t)\left(\ol z-\t\right)}}{c\t+d} \in \{\pm1\}.
\end{equation*}
Since we are using the principal values of the logarithm to define square roots, $R_\t(z)$ is discontinuous on the $z$-plane along the vertical cut from $\t$ to $\ol\t$. Then following the modular transformation, the sign factor $\chi_M(\t,z)$ is discontinuous along this vertical line and along another piece of a hyperbolic geodesic emanating from $\t$ (to which the vertical cut of $R_\t(z)$ transforms). Moreover, on these discontinuity cuts themselves, the sign factor $\chi_M(\t,z)$ takes a constant value (since $R_\t(z)$ is continuous if it is restricted to its vertical cut). In particular, $\chi_M(\t,z)$ is constant along our integration line, which itself is a hyperbolic geodesic from $\t$ to $-\frac dc$. We can determine this sign to be $\sgn(c\t_1+d)$ by setting $z=-\frac dc$. Then using the modular transformation in \eqref{eq:U_modular_transformation} and using our freedom to deform the path of integration to write $\int_\t^{-\frac dc}=\int_\t^{i\infty}-\int_{-\frac dc}^{i\infty}$ we obtain the result.
\end{proof}

Our next goal is to define a function on rationals by taking vertical limits of $u_j (\t)$ and removing the growing pieces.

\begin{prop}\label{prop:uj_vertical_limit}
For any $M:=\pmat{a&b\\c&d}\in\GG$ with $c\ne0$, the limit
\begin{equation}\label{eq:uj_vertical_limit}
\lim_{t\to0^+} \left(u_j\left(-\frac dc+it\right)-\frac{1}{\pi|c|t} \sum_{k=1}^N \Psi_{M^{-1}}(j,k)  c_k\right)
\end{equation}
exists. Moreover, the same limit is obtained for any element of $\GG$ in the same equivalence class as $M$ in $\GG_\infty\backslash\GG$. Using this limit we can define the functions $\mathfrak{u}_j:\calQ_\GG\to\C$ by
\begin{equation*}\label{eq:frak_uj_definition}
\mathfrak{u}_j(x) := \lim_{t\to0^+}\left(u_j(x+it)-\frac{\g_{j,x}}{\pi t}\right) \quad \mbox{with } \g_{j, x} := \frac{1}{|c_x|} \sum_{k=1}^N \Psi_{M_x^{-1}}(j,k)  c_k.
\end{equation*}
Here $x=-\frac{d_x}{c_x}$ and $M_x=\pmat{a_x&b_x\\c_x&d_x}\in\GG$ as in the definition of $\calQ_\GG$ in \eqref{eq:Qgammma_definition}.
\end{prop}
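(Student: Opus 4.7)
My plan is to express $u_j(-d/c + it)$ via the modular transformation of Proposition~\ref{prop:uj_modularity} applied to $M^{-1}$ (which sends $i\infty$ to $-d/c$), and then extract the $1/t$-divergence by analyzing the $T \to \infty$ behavior of the resulting expression. Since Proposition~\ref{prop:uj_modularity} requires $\t_1 \neq a/c$ (the analogue of $-d/c$ for $M^{-1}$), I would approach through $\t = a/c + \e + iT$ with $\e \to 0^+$; this selects the right-branch analytic continuation $\calU^+_{k,a/c}$ of Remark~\ref{rem:error_modularity_holomorphic_extension_cut_plane}. Direct computation gives $M^{-1}(a/c + iT) = -d/c + i/(c^2 T)$ and $\sgn(-c\t_1 + a)(-c\t + a) \to i|c|T$ as $\e \to 0^+$, so setting $t := 1/(c^2 T)$ turns Proposition~\ref{prop:uj_modularity} into
\begin{equation*}
u_j\!\left(-\tfrac{d}{c}+it\right) = \frac{i}{|c|t}\sum_{k=1}^N \Psi_{M^{-1}}(j,k)\bigl(u_k(a/c + iT) + \calU^+_{k,a/c}(a/c + iT)\bigr).
\end{equation*}

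By Proposition~\ref{prop:uj_fourier_expansion}, $u_k(a/c + iT) = -c_k/\pi + O(e^{-DT})$ for some $D > 0$. The main work is to show $\calU^+_{k,a/c}(a/c + iT) = c_k(1-i)/\pi + O(1/T)$. I would split $U_k = c_k\sqrt{z_2} + \tilde U_k$ with $\tilde U_k$ the cuspidal part at $i\infty$. On the vertical path $z = a/c + is$, a short computation using $\bar\del R_{a/c+iT}(z) = R_{a/c+iT}(z)\cdot i(T-s)/(4s(s+T))$ collapses $[c_k\sqrt{z_2}, R_{a/c + iT}]$ to $c_k T\,ds/(2(s+T)^{3/2}\sqrt{s-T})$; the right-branch convention $\sqrt{s-T} = i\sqrt{T-s}$ for $s < T$ together with the elementary evaluations $\int_0^T ds/((s+T)^{3/2}\sqrt{T-s}) = \int_T^\infty ds/((s+T)^{3/2}\sqrt{s-T}) = 1/T$ integrate this piece to the $T$-independent value $c_k(1-i)/\pi$. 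For the $\tilde U_k$ contribution I would partition the integration into a bulk region $s \in (1, T/2)$ where exponential decay of $\tilde U_k$ against $|R_{a/c + iT}(z)| = O(\sqrt{s}/T)$ produces an $O(1/T)$ contribution; a window $s \in [T/2, 2T]$ around the branch singularity of $R_\t$ where $\tilde U_k(a/c + is) = O(e^{-\pi T})$ beats the integrable singularity of $R_\t$; and the cusp region $s \in (0, 1)$ where the leading $1/s$-divergences of $\del \tilde U_k \cdot R_\t\,dz$ and $\tilde U_k \cdot \bar\del R_\t\,d\bar z$ arising from $\tilde U_k \sim \tilde c_k/(|c|\sqrt{s})$ (with $\tilde c_k = \sum_\ell \Psi_M(k,\ell)c_\ell$) cancel along the vertical path, leaving an integrand bounded by $O(1/T)$.

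Combining yields $u_k(a/c + iT) + \calU^+_{k,a/c}(a/c + iT) = -ic_k/\pi + O(1/T)$, so substituting into the identity above and using $|c|T = 1/(|c|t)$ gives
\begin{equation*}
u_j\!\left(-\tfrac{d}{c}+it\right) = \frac{1}{\pi|c|t}\sum_{k=1}^N \Psi_{M^{-1}}(j,k)\, c_k + O(1) \quad\text{as } t \to 0^+,
\end{equation*}
which establishes the existence of $\mathfrak{u}_j(-d/c)$ and identifies the divergent part as $\gamma_{j,-d/c}/(\pi t)$. For independence under $M \mapsto T^n M$: this preserves $c, d$ (hence $-d/c$) and shifts $a/c$ to $a/c + n$; the multiplier picks up an extra factor $e^{-2\pi i n\alpha_k}$ on $\Psi_{M^{-1}}(j,k)$, while translation of the integrand gives $u_k(a/c + n + iT) = e^{2\pi i n\alpha_k}u_k(a/c + iT)$ and $\calU^+_{k, a/c + n}(a/c + n + iT) = e^{2\pi i n\alpha_k}\calU^+_{k, a/c}(a/c + iT)$, so these factors cancel; the consistency $c_k \neq 0 \Rightarrow \alpha_k \in \Z$ (forced by comparing the constant term $c_k\sqrt{\t_2}$ against the multiplier $\Psi_T(k,k) = e^{2\pi i\alpha_k}$ under $\t \mapsto \t + 1$) then ensures $\gamma_{j,-d/c}$ is also invariant. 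The hard part will be the estimate $\tilde\calU^+_{k, a/c}(a/c + iT) = O(1/T)$, which requires simultaneous control of the branch-point behavior of $R_\t$ and the cuspidal divergence of $\tilde U_k$ at $a/c$, with the leading-order cancellation verified before the $O(1/T)$ remainder can be extracted.
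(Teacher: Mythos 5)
Your route is genuinely different from the paper's. The paper never invokes Proposition \ref{prop:uj_modularity}: it works directly with the integral representation \eqref{eq:uj_explicit_version} of $u_j(-\frac{d}{c}+it)$, splits the $v$-integral at $v=1$, applies the modular transformation of $U_j$ \emph{inside} the integrand on $(0,1)$, subtracts the constant terms of $U_k$ and $\del U_k$ there, and evaluates the leftover elementary integral $\int_0^1 v^{-1/2}(v+2t)^{-3/2}\,dv$ to exhibit the $\frac{1}{\pi|c|t}$-divergence. You instead push the already-integrated transformation law of Proposition \ref{prop:uj_modularity} to the boundary point $\t_1=a/c$ via the right-hand continuation of Remark \ref{rem:error_modularity_holomorphic_extension_cut_plane} and extract the divergence from the asymptotics of $u_k$ and $\calU^+_{k,a/c}$ along the line $a/c+iT$. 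Your leading-order computations check out: the boundary value $i|c|T$ of the automorphy factor, the collapse of $[c_k\sqrt{z_2},R_{a/c+iT}]$ to $c_kT\,ds/(2(s+T)^{3/2}\sqrt{s-T})$, the two elementary integrals equal to $1/T$, the resulting exact value $c_k(1-i)/\pi$, and the cancellation of the $1/s$-singularities of $\del\tilde U_k\,R\,dz$ and $\tilde U_k\,\bar\del R\,d\bar z$ near the cusp are all correct, and they do reproduce the coefficient $\frac{1}{\pi|c|t}\sum_k\Psi_{M^{-1}}(j,k)c_k$. Your well-definedness argument on $\GG_\infty\backslash\GG$ (via $c_k\ne0\Rightarrow\a_k\in\Z$) is also sound and is essentially the paper's one-line remark made explicit.

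There is, however, one genuine gap: from
\begin{equation*}
u_j\!\left(-\tfrac{d}{c}+it\right)-\frac{1}{\pi|c|t}\sum_{k=1}^N\Psi_{M^{-1}}(j,k)\,c_k
= i|c|T\sum_{k=1}^N\Psi_{M^{-1}}(j,k)\left(u_k\!\left(\tfrac{a}{c}+iT\right)+\calU^+_{k,\frac{a}{c}}\!\left(\tfrac{a}{c}+iT\right)+\frac{ic_k}{\pi}\right)
\end{equation*}
an estimate of the bracket by $O(1/T)$ only yields that the left-hand side is $O(1)$, i.e.\ \emph{bounded}; it does not establish that the limit exists, which is the actual assertion of the proposition. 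You need the stronger statement that $T$ times the bracket \emph{converges} as $T\to\infty$. This is within reach of your decomposition --- in the bulk region $T|R_{a/c+iT}(a/c+is)|=T\sqrt{s}/\sqrt{T^2-s^2}$ converges pointwise to $\sqrt{s}$ with a uniform integrable majorant against $e^{-Ds}$, the window is exponentially negligible, and in the cusp region the post-cancellation integrand times $T$ admits a $T$-independent integrable bound, so dominated convergence identifies the limit of each piece --- but as written the existence claim is asserted, not proved. (Compare the paper's proof, where the analogous step is handled by checking that the $t\to0^+$ limit can be taken inside each integrand after the explicit divergent integral has been removed.) You should also state explicitly that both sides of Proposition \ref{prop:uj_modularity} are continuous up to the boundary $\t_1=a/c$ from the right, so that the identity survives the $\e\to0^+$ limit; this is true but is the hinge on which your whole argument turns.
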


\begin{proof}
We start by explicitly writing the expression for $u_j (\t)$ in \eqref{eq:defn_u_and_error} as 
\begin{equation}\label{eq:uj_explicit_version}
u_j (\t) = \frac{1}{2 \pi} 
\int_0^\infty \lp - 4 i  \del  U_j(\t+iv) \frac{\sqrt{v+\t_2}}{\sqrt{v} \sqrt{v+2\t_2}}
+
U_j(\t+iv) \frac{\sqrt{v}}{\sqrt{v+\t_2} (v+2\t_2)^{\frac32}}
\rp  dv .
\end{equation}
Inserting $\t = -\frac{d}{c} + it$ and separating the integral into two pieces as $\int_0^1$ and $\int_1^\infty$, we easily find that the limit $t \to 0^+$ exists for the latter piece and can be computed by setting $t=0$ thanks to the bounds in \eqref{eq:Uj_delUj_bounds1}. For the integral $\int_0^1$, on the other hand, we use the modular transformations in \eqref{eq:U_modular_transformation} and separate the constant term to rewrite this contribution as
\begin{align*}
&- \frac{1}{2 \pi c^2} \sum_{k=1}^N \Psi_{M^{-1}} (j,k)
\int_0^1 \lp - 4 i \del  U_k \lp  \frac{a}{c}+ \frac{i}{c^2(v+t)} \rp 
+ c_k |c| \sqrt{v+t} \rp
\frac{1}{\sqrt{v} \sqrt{v+2t} (v+t)^{\frac32}}  dv
\\ & \qquad 
+\frac{1}{2 \pi} \sum_{k=1}^N \Psi_{M^{-1}} (j,k)
\int_0^1 \lp U_k \lp  \frac{a}{c}+ \frac{i}{c^2(v+t)} \rp - \frac{c_k}{|c|\sqrt{v+t}} \rp
\frac{\sqrt{v}}{\sqrt{v+t} (v+2t)^{\frac32}}  dv
\\ & \qquad \qquad 
+\frac{1}{\pi |c|} \sum_{k=1}^N \Psi_{M^{-1}} (j,k)  c_k 
\int_0^1  \frac{dv}{\sqrt{v} (v+2t)^{\frac32}} .
\end{align*}
Next we use a looser version of the bound in \eqref{eq:Uj_delUj_bounds1}, namely that for any $\varepsilon > 0$ we have a constant $B_\varepsilon > 0$ such that for all $\t_2 \geq \varepsilon$ and for all $j \in \{1,\ldots,N\}$ we have 
\begin{equation}\label{eq:Uj_delUj_bounds2}
\left| U_j (\t) - c_j \sqrt{\t_2} \right|, \quad
\left| 4 i \del U_j (\t) - \frac{c_j}{\sqrt{\t_2}} \right| \leq \frac{B_\varepsilon}{\t_2^2}.
\end{equation}
Using these two bounds, we find that for $t, v \leq 1$ the integrands in the first and second lines can be bounded by $\frac{C}{\sqrt{v}}$ and $C \sqrt{v}$, respectively, for an appropriate constant $C>0$. Thanks to these upper bounds and to the fact that the integrands are continuous at $t =0$, for those two terms, the limit $t \to 0^+$ exists and they can simply be taken by explicitly setting $t=0$. The third line, on the other hand, can be explicitly evaluated and we find that the piece of it that grows as $t \to 0^+$ is removed by the term $\frac{1}{\pi |c| t} \sum_{k=1}^N \Psi_{M^{-1}} (j,k)  c_k$ in \eqref{eq:uj_vertical_limit}.

Finally, the fact that the same limit is obtained for any $\pmat{a + rc & b + rd \\ c & d} \in \GG$ (with $r \in \Z$) follows from the diagonality of the multiplier system over $\GG_\infty$.
\end{proof}

Now the functions $\mathfrak{u}_j$ are defined through vertical limits in $\H$, we show how the same functions are obtained when we slightly deform the path through which we take the limit.

\begin{lem}\label{lem:frak_uj_limit_path_deformation}
For any $x \in \mathcal{Q}_\GG$ 
and for any smooth function $X: \R^+ \to \R$ that satisfies $X (t) = B t^2 + o(t^2)$ as $t \to 0^+$ we have
\begin{equation*}
\mathfrak{u}_j (x) = \lim_{t \to 0^+} \lp
u_j \lp x + i t + X (t) \rp -  \frac{\g_{j, x}}{\pi (t-i X(t))} \rp .
\end{equation*}
\end{lem}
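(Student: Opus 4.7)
The plan is to mimic the proof of Proposition \ref{prop:uj_vertical_limit}, tracking how the real-part deformation $X(t)$ enters. With $\tau := x + it + X(t)$ we still have $\tau_2 = t$, so only the argument $\tau + iv = x + X(t) + i(t+v)$ of $U_j$ and $\partial U_j$ in the integral representation \eqref{eq:uj_explicit_version} is shifted by $X(t)$; the outer kernel is unchanged. I would split $\int_0^\infty = \int_0^1 + \int_1^\infty$, use \eqref{eq:Uj_delUj_bounds1} to take the limit inside on the tail, and on $\int_0^1$ apply the modular transformation at $x = -d/c$ to write $U_j(\tau+iv) = \sum_k \Psi_{M^{-1}}(j,k)\,U_k(M(\tau+iv))$, where $M(\tau+iv) = a/c - 1/(c^2\xi)$ with $\xi := X(t) + i(t+v)$, so that $\mathrm{Im}(M(\tau+iv)) = (t+v)/(c^2|\xi|^2)$. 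Separating the constant term $c_k\sqrt{\mathrm{Im}(M(\tau+iv))}$ from $U_k$ and the analogous term from $\partial U_k$, the bound \eqref{eq:Uj_delUj_bounds2} shows that the remainder integrands are dominated by $C/\sqrt{v}$ and $C\sqrt{v}$ uniformly in $t$ for small $t$ (crucially using $|X(t)|/(t+v) = O(t)$); dominated convergence then implies these pieces converge to the same limits as in the vertical case $X \equiv 0$.

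The actual new computation is the asymptotic of the remaining constant term contribution on $\int_0^1$. After combining the two parts via the identity $-|\xi|^2(v+2t) + v\xi^2 = 2\xi\,(i(t+v)^2 - tX(t))$, it reads
\begin{equation*}
\frac{\gamma_{j,x}}{\pi}\int_0^1 \frac{i(t+v)^2 - tX(t)}{|\xi|\,\xi\,\sqrt{v}\,(v+2t)^{3/2}}\,dv .
\end{equation*}
The substitutions $v = 2tw^2$ and then $r = 2w^2 + 1$ rewrite this as $\frac{\gamma_{j,x}}{\pi t}\,J(\tilde X,t)$, where $\tilde X := X(t)/t = O(t)$ and
\begin{equation*}
J(\tilde X,t) := \int_1^{1 + \frac{1}{t}} \frac{r^3 - \tilde X^2 + i\tilde X\,r(r+1)}{(\tilde X^2 + r^2)^{3/2}\,\sqrt{r-1}\,(r+1)^{3/2}}\,dr .
\end{equation*}
A small-$\tilde X$ Taylor expansion, combined with the identities $\int_1^\infty \frac{dr}{\sqrt{r-1}(r+1)^{3/2}} = 1$ and $\int_1^\infty \frac{dr}{r^2\sqrt{r^2-1}} = 1$, yields $J(\tilde X, t) = 1 + i\tilde X - t + O(t^2) = \frac{1}{1 - i\tilde X} - t + O(t^2)$. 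Hence the constant term contribution equals $\frac{\gamma_{j,x}}{\pi(t-iX(t))} - \frac{\gamma_{j,x}}{\pi} + O(t)$, so subtracting $\frac{\gamma_{j,x}}{\pi(t-iX(t))}$ leaves $-\frac{\gamma_{j,x}}{\pi} + o(1)$, matching the corresponding limit in the $X \equiv 0$ case. Together with the matching limits of the non-constant and tail pieces and Proposition \ref{prop:uj_vertical_limit}, this gives the claimed equality.

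The main obstacle is justifying the Taylor expansion of $J(\tilde X, t)$ with the stated uniform error $O(t^2)$. One needs to bound the second-order remainder $I(\tilde X, r) - I(0,r) - \tilde X\,\partial_\epsilon I(0,r)$ by $C\tilde X^2$ times a function of $r$ integrable on $[1, \infty)$, and to estimate the tail truncation at $r = 1 + 1/t$. Both tasks are elementary but require some care given the moderate algebraic complexity of the integrand in $r$ and $\tilde X$.
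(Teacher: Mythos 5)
Your proposal is correct and follows essentially the same strategy as the paper's proof: split the integral representation \eqref{eq:uj_explicit_version} at $v=1$, apply the modular transformation on $\int_0^1$, isolate the constant terms of $U_k$ and $\del U_k$, and dispose of the remaining pieces by dominated convergence via \eqref{eq:Uj_delUj_bounds1} and \eqref{eq:Uj_delUj_bounds2}. The only organizational difference is that the paper first reduces the claim to the equivalent statement $\lim_{t\to0^+}\big(u_j(x+it+X(t))-u_j(x+it)\big)=\frac{iB\gamma_{j,x}}{\pi}$ and expands the \emph{difference} of the two constant-term integrals, whereas you expand the deformed constant-term integral directly in $\tilde X=X(t)/t$; your algebraic identity and the two auxiliary integrals check out, and since the conclusion only requires $J(\tilde X,t)-\frac{1}{1-i\tilde X}=-t+o(t)$, the uniform $O(t^2)$ Taylor remainder you flag as the remaining obstacle is more than sufficient and is indeed routine to justify.
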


\begin{proof} 
We prove the equivalent statement
\begin{equation}\label{eq:uj_rational_limit_deformed}
\lim_{t \to 0^+} \lp
u_j \lp x + i t + X (t) \rp 
- u_j \lp x + i t \rp \rp =  \frac{i B  \g_{j, x}}{\pi} .
\end{equation}
We start by using equation \eqref{eq:uj_explicit_version} to write
\begin{align*}
&u_j(x+it+X(t)) - u_j(x+it)
\\
& \qquad \qquad
= \frac{1}{2\pi} \int_0^\infty (-4i\del U_j(x+X(t)+i(v+t))+4i\del U_j(x+i(v+t))) \frac{\sqrt{v+t}}{\sqrt{v}\sqrt{v+2t}} dv
\\ & \qquad \qquad\qquad\qquad
+ \frac{1}{2 \pi} \int_0^\infty (U_j(x+X(t)+i(v+t))-U_j(x+i(v+t))) \frac{\sqrt{v}}{\sqrt{v+t}(v+2t)^{\frac32}} dv.
\end{align*}
As in Proposition \ref{prop:uj_vertical_limit}, we separate the integrals into two pieces as $\int_0^1$ and $\int_1^\infty$. For the contributions from $\int_1^\infty$, the same argument that we use there can be employed to show that the limit $t\to0^+$ can be computed by explicitly setting $t=0$ inside the integrands. In particular, this shows that the contributions from $\int_1^\infty$ do not contribute to the limit in \eqref{eq:uj_rational_limit_deformed}.

To study the contributions from $\int_0^1$, we let $M=\pmat{a&b\\c&d}\in\GG$ be such that 
$x=-\frac{d}{c}$ and use modular transformations by $M^{-1}$ to rewrite them as
\begin{align*}
&\frac{1}{2\pi c^2} \sum_{k=1}^N \Psi_{M^{-1}}(j,k) \int_0^1 4i\del U_k\left(\frac{a}{c}+\frac{i}{c^2(v+t-iX(t))}\right) \frac{\sqrt{v+t}}{\sqrt{v}\sqrt{v+2t}(v+t-iX(t))^2} dv\\
&\qquad - \frac{1}{2\pi c^2} \sum_{k=1}^N \Psi_{M^{-1}}(j,k) \int_0^1 4i\del U_k\left(\frac{a}{c}+\frac{i}{c^2(v+t)}\right) \frac{1}{\sqrt{v}\sqrt{v+2t}(v+t)^\frac32} dv\\
&\qquad\qquad + \frac{1}{2\pi} \sum_{k=1}^N \Psi_{M^{-1}}(j,k) \int_0^1 U_k\left(\frac{a}{c}+\frac{i}{c^2(v+t-iX(t))}\right) \frac{\sqrt{v}}{\sqrt{v+t}(v+2t)^{\frac32}} dv\\
&\qquad\qquad\qquad - \frac{1}{2\pi} \sum_{k=1}^N \Psi_{M^{-1}}(j,k) \int_0^1 U_k\left(\frac{a}{c}+\frac{i}{c^2(v+t)}\right) \frac{\sqrt{v}}{\sqrt{v+t}(v+2t)^{\frac32}} dv.
\end{align*}
Now we choose $T \leq 1$ to be small enough that $| X(t)| \leq t$ for all $t \in [0,T]$. Then for $v \in (0,1)$ 
\begin{equation*}
\mathrm{Im} \lp  \frac{a}{c} + \frac{i}{c^2 \lp v+t - i X(t) \rp} \rp \geq \frac{1}{4c^2} .
\end{equation*}
As in the proof of Proposition \ref{prop:uj_vertical_limit}, this allows us to use the bounds in \eqref{eq:Uj_delUj_bounds2} to show that once we remove the constant terms of $U_k$ and $\del U_k$'s we can take $t\to0^+$ by setting $t=0$ inside the integrands and hence we get no contribution to the limit in \eqref{eq:uj_rational_limit_deformed} from such terms.
This leaves us the contribution from the constant terms:
\begin{align*}
\frac{1}{2 \pi |c|} &\sum_{k=1}^N \Psi_{M^{-1}} (j,k)  c_k  
\int_0^1 \lp
\frac{\sqrt{X(t)^2 + (v+t)^2}}{\sqrt{v} \sqrt{v+2t} \lp v+ t - i X(t) \rp^2}
-
\frac{1}{\sqrt{v} \sqrt{v+2t} (v+t)} \right.
\\  & \hspace{65mm}
\left.
+ \frac{\sqrt{v}}{\sqrt{X(t)^2 + (v+t)^2} (v+2t)^{\frac32}}
- \frac{\sqrt{v}}{(v+t) (v+2t)^{\frac32}}
\rp dv \\
&=\frac{\g_{j, x}}{\pi t} 
 \int_0^{\frac1t}
\frac{1}{\sqrt{v} (v+2)^{\frac32}} 
\left(
\left(
{ \frac{(v+1)^3}{\left( \frac{X(t)^2}{t^2} + (v+1)^2 \right)^{\frac32}} -1 }
\right)
\left( 1 + \frac{i X(t)}{t} \frac{v+2}{(v+1)^2} \right) 
 \right.
\\
& \left.\hspace{30mm}
\phantom{\frac{(v+1)^3}{\left( \frac{X(t)^2}{t^2} + (v+1)^2 \right)^{\frac32}}}
- \frac{X(t)^2}{t^2} \frac{1}{\left( \frac{X(t)^2}{t^2} + (v+1)^2 \right)^{\frac32}} 
+\frac{i X(t)}{t}\frac{v+2}{(v+1)^2}
\right)  dv, 
\end{align*}
from which it is easy to see that the first two terms in the outer parentheses give vanishing contributions as $t \to 0^+$, whereas the contribution of $\frac{i X(t)}{t}\frac{v+2}{(v+1)^2}$ gives the right-hand side of \eqref{eq:uj_rational_limit_deformed} and thereby concludes our proof.
\end{proof}

We are now ready to state the quantum modular transformation properties of the functions $\mathfrak{u}_j$.
\begin{thm}\label{thm:frak_uj_quantum_modular_transformations}
For any $M := \pmat{a&b\\c&d}\in \GG$ and $x \in \mathcal{Q}_\GG \setminus \{-\frac{d}{c} \}$  we have the quantum modular transformation property
\begin{equation*}
\mathfrak{u}_j\lp \frac{ax+b}{cx+d}\rp 
= |cx+d| \sum_{k=1}^N \Psi_M(j,k) 
\lp \mathfrak{u}_k(x)+\mathcal{U}_{k,-\frac{d}{c}}(x)\rp.
\end{equation*}
\end{thm}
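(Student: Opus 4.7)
The plan is to apply the modular transformation of $u_j$ on $\H$ (Proposition \ref{prop:uj_modularity}) along the image of a vertical approach to $x$, and to regularize both sides using Lemma \ref{lem:frak_uj_limit_path_deformation}. Set $\t := x+it$ and $\t' := \frac{a\t+b}{c\t+d}$, with $y := \frac{ax+b}{cx+d}$. A direct computation using $ad-bc=1$ gives
\begin{equation*}
T := \mathrm{Im}(\t') = \frac{t}{|c\t+d|^2},\quad X(T) := \mathrm{Re}(\t') - y = \frac{ct^2}{(cx+d)|c\t+d|^2},\quad T - iX(T) = \frac{t}{(cx+d)(c\t+d)}.
\end{equation*}
Since $X(T) = c(cx+d) T^2 + o(T^2)$ and $T \mapsto t$ is smooth near $0$, Lemma \ref{lem:frak_uj_limit_path_deformation} applied at $y$ yields
\begin{equation*}
\mathfrak{u}_j(y) = \lim_{T \to 0^+}\left(u_j(\t') - \frac{\g_{j,y}}{\pi(T-iX(T))}\right) = \lim_{t\to 0^+}\left(u_j(\t') - \frac{\g_{j,y}(cx+d)(c\t+d)}{\pi t}\right).
\end{equation*}

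Next I would invoke Proposition \ref{prop:uj_modularity} to expand $u_j(\t')$, so that the algebraic heart of the argument becomes the multiplier-system identity
\begin{equation*}
\g_{j,y}(cx+d) = \sgn(cx+d)\sum_{k=1}^N \Psi_M(j,k)\,\g_{k,x}.
\end{equation*}
This follows from the cocycle relation $\Psi_{AB} = \Psi_A \Psi_B$ (obtained by iterating \eqref{eq:U_modular_transformation}) applied to $M_x = M_y M$, where we take the admissible choice $M_y := M_x M^{-1}$, together with the elementary identity $|c_y| = |cx+d|\cdot|c_x|$. Plugging this into the regularization term merges the divergent piece neatly into the sum over $k$, producing
\begin{equation*}
\mathfrak{u}_j(y) = \lim_{t\to 0^+}\sgn(cx+d)(c\t+d)\sum_{k=1}^N \Psi_M(j,k)\left(u_k(\t) - \frac{\g_{k,x}}{\pi t} + \calU_{k,-\frac{d}{c}}(\t)\right).
\end{equation*}

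The final step is to evaluate the limit termwise: $\sgn(cx+d)(c\t+d) \to |cx+d|$; $u_k(\t) - \g_{k,x}/(\pi t) \to \mathfrak{u}_k(x)$ by definition of $\mathfrak{u}_k$; and $\calU_{k,-\frac{d}{c}}(\t) \to \calU_{k,-\frac{d}{c}}(x)$ by the continuity of $\calU_{k,-\frac{d}{c}}$ on $\C\setminus(-\frac{d}{c} + i\R)$ established in Proposition \ref{prop:modularity_error_holomorphy}, which applies since $x \ne -\frac{d}{c}$. The case $c = 0$ follows immediately from the diagonality of $\Psi$ on $\GG_\infty$.

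The main obstacle is the exact matching of two regularizations: the one intrinsic to $\mathfrak{u}_j(y)$ uses $\g_{j,y}$, while the modular transformation of $u_j$ produces divergences controlled by $\g_{k,x}$ weighted by $\Psi_M(j,k)$. This tension is what forces both the multiplier-system identity above and the use of the curved-path version Lemma \ref{lem:frak_uj_limit_path_deformation} rather than a naive vertical limit, the latter being essential because the vertical line through $x$ maps under $M$ to a curve approaching $y$ with nontrivial tangential displacement.
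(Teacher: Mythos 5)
Your proposal is correct and follows essentially the same route as the paper's proof: it combines Proposition \ref{prop:uj_modularity} with the representation identity \eqref{eq:gamma_modular_property} (your version $\g_{j,y}(cx+d)=\sgn(cx+d)\sum_k\Psi_M(j,k)\g_{k,x}$ is equivalent), uses Lemma \ref{lem:frak_uj_limit_path_deformation} to handle the curved approach to $\frac{ax+b}{cx+d}$, and evaluates the remaining limits via Propositions \ref{prop:uj_vertical_limit} and \ref{prop:modularity_error_holomorphy}. The only difference is presentational (you start from the left-hand side and expand, while the paper starts from the transformation law), and your explicit computation of $T$, $X(T)$, and $T-iX(T)$ matches the paper's.
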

\begin{proof}
We start by noting that $\Psi_M$ is a multiplier system for a weight zero modular object (see equation \eqref{eq:U_modular_transformation}). Therefore, it forms a genuine representation of the modular group $\GG$ (as opposed to a projective one) and this leads to the identity
\begin{equation}\label{eq:gamma_modular_property}
\sum_{k=1}^N \Psi_M(j,k)   \g_{k,x} 
=
|cx+d|   \g_{j, \frac{ax+b}{cx+d}} . 
\end{equation}
So plugging in $\t = x+it$ (with $t>0$) to the modular transformation worked out in Proposition \ref{prop:uj_modularity} and using \eqref{eq:gamma_modular_property} we obtain
\begin{align*}
&u_j \lp \frac{ax+b}{cx+d} + \frac{it}{(cx+d)(cx+cit+d)} \rp
- \frac{\g_{j,\frac{ax+b}{cx+d}}}{\pi t}(cx+d)(cx+cit+d)
\\ & \qquad \qquad 
=
\sgn(cx+d)(cx+cit+d) \sum_{k=1}^N \Psi_M(j,k) 
\lp u_k(x+it) - \frac{\g_{k,x}}{\pi t}+\mathcal{U}_{k,-\frac{d}{c}}(x+it)\rp .
\end{align*}
By Propositions \ref{prop:modularity_error_holomorphy} and \ref{prop:uj_vertical_limit}, the right-hand side tends to the right-hand side of the claim as $t \to 0^+$. Finally, we note that the left-hand side tends to $\mathfrak{u}_j( \frac{ax+b}{cx+d})$ as $t \to 0^+$ by Lemma \ref{lem:frak_uj_limit_path_deformation}.
\end{proof}

\section{Mock Maass Theta functions}\label{sec:mock_maass_theta_functions}

In the following sections, we see that the functions $L_1,\dots,L_{12}$ can all be expressed in terms of certain theta functions of the form\footnote{Note that throughout we write vectors in bold letters.}
\begin{equation}\label{eq:indefinite_false_theta}
\sum_{\bm n\in\Z^d+\bm\mu} (1-\sgn(B(\bm n,\bm{c_1}))\sgn(B(\bm n,\bm{c_2})))q^{Q(\bm n)},
\end{equation}
where $Q$ is a quadratic form of indefinite signature and the vectors $\bm{c_1}$ and $\bm{c_2}$ of negative norm ensure convergence. Such functions are not as well-studied as similar looking indefinite theta functions of the shape 
\begin{equation*}
\sum_{\bm n\in\Z^d+\bm\mu} (\sgn(B(\bm n,\bm{c_1}))-\sgn(B(\bm n,\bm{c_2})))q^{Q(\bm n)},
\end{equation*}
which are known to yield mock modular forms thanks to the ground-breaking work of Zwegers \cite{Zw2}. The functions in equation \eqref{eq:indefinite_false_theta}, which are hybrids of indefinite and false theta functions, do not easily fit into a modular framework; but they are still interestingly related to the so-called mock Maass theta functions developed by Zwegers in \cite{Zw}. In this section, we review the properties of such functions for our later use.

First, we restrict ourselves to $2$-dimensional lattices and let $Q$ be a binary quadratic form of signature $(1,1)$. Throughout the paper we assume that $Q(\bm n)$ is integral for all $\bm n=(n_1\ n_2)^T\in\Z^2$ (in which case we call the quadratic form {\it even}) and that $Q(\bm n)=\frac12\bm n^TA\bm n$, where $A$ is a  symmetric $2\times2$ matrix. Finally, we let $B$ denote the bilinear form associated to $Q$ as $B(\bm n,\bm m)=\bm n^TA\bm m$.

Next, we recall that the set of vectors $\bm c\in\R^2$ with $Q(\bm c)=-1$ splits into two connected components. Fixing a vector $\bm{c_0}$ in one of the components, all the vectors in the same component are characterized by 
\begin{equation*}
C_Q:=\left\{ \bm{c}\in\R^2 : Q(\bm c)=-1, B(\bm{c},\bm{c_0})<0 \right\}.
\end{equation*}
In particular, if $B(\bm{c_1} ,\bm{c_2})<0$, then $\bm{c_1}$ and $\bm{c_2}$ belong to the same component.  
Similarly, the set of vectors $\bm c \in \R^2$ with $Q(\bm c)=1$ splits into  two components, as well. Choosing $\bm{c_0^\perp}$ as one of the two unit vectors that are orthogonal to $\bm{c_0}$, all the unit vectors in the same component are given by 
\begin{equation*}
C_Q^\perp := \left\{ \bm{c}\in\R^2 : Q(\bm c)=1, B\left(\bm{c},\bm{c_0^\perp}\right) > 0 \right\}.
\end{equation*}

It is convenient to parametrize $C_Q$ and $C_Q^\perp$ by using the reference quadratic form $Q_0(\bm x)=x_1^2-x_2^2$. For this, we let $P\in\GL_2(\R)$ be such that
\begin{equation*}
A=P^T \mat{2 & 0 \\ 0 & -2} P,
\qquad 
P^{-1} \mat{0 \\ 1}  \in C_Q,
\andd
P^{-1} \mat{1 \\ 0}  \in C_Q^\perp .
\end{equation*}
Then we parametrize the vectors in $C_Q$ and $C_Q^\perp$ by letting
\begin{equation}\label{eq:ct_definition}
\bm{c(t)}:= P^{-1}  \mat{\sinh(t) \\ \cosh(t)} \in C_Q
\andd
\bm{c^\perp(t)}:= P^{-1}  \mat{\cosh(t) \\ \sinh(t)} \in C_Q^\perp 
\quad
\mbox{for } t \in \mathbb{R} .
\end{equation}
Also, when we consider a number of vectors $\bm{c_j} \in C_Q$ below, it is convenient to let  $t_j \in \R$ be such that $\bm{c(t_j)}=\bm{c_j}$ and let $\bm{c_j^\perp}=\bm{c^\perp(t_j)}$.

\begin{rem}\label{rem:P_choice}
The reference quadratic form we choose here is convenient since the quadratic forms in all of our examples are of the form $Q(\bm n)=\a_1n_1^2-\a_2n_2^2$ with $\a_1,\a_2\in\N$. We then choose $C_Q$ as the set of $\bm c\in\R^2$ with $Q(\bm c)=-1$ and $c_2>0$, whereas $C_Q^\perp$ as the set of $\bm c\in\R^2$ with $Q(\bm c)=1$ and $c_1>0$. Finally, note that we fix $P$ in all such examples by selecting $P=\pmat{\sqrt{\a_1}&0\\0&\sqrt{\a_2}}$ to satisfy the conditions in \eqref{eq:ct_definition}.\footnote{In \cite{Zw}, $Q_0(\bm x)=x_1x_2$ was used as the reference quadratic form and one picks a matrix $\calP\in\GL_2(\R)$ such that $A=\calP^T\pmat{0&1\\1&0}\calP$ and $\calP^{-1}\pmat{1\\-1}\in C_Q$ to parametrize $C_Q$. By letting $\calP=\pmat{1&1\\1&-1}P$, we can see the equivalence of the two definitions up to the extra requirement $P^{-1}\pmat{1\\0}\in C_Q^\perp$ here. To see why this extra condition is included here, note that one can multiply any $\calP$ as defined in \cite{Zw} on the left by $\pmat{0&-1\\-1&0}$. This preserves both of the conditions $A=\calP^T\pmat{0&1\\1&0}\calP$ and $\calP^{-1}\pmat{1\\-1}\in C_Q$. However, this changes the parameter $t$ on $C_Q$ as $t\mapsto-t$ (and in particular $t_j\mapsto-t_j$) while also changing $\bm{c_j^\perp}\mapsto-\bm{c_j^\perp}$. Consequently, this transformation changes the overall sign of the mock Maass theta functions defined in \cite{Zw}, whereas the extra condition here eliminates this ambiguity.}
\end{rem}

With this background at hand, we follow the Definition 2.3 of \cite{Zw} and define the completed mock Maass theta function (with $\bm\mu\in\R^2$ and $\bm{c_1}, \bm{c_2} \in C_Q$) 
\begin{equation}\label{eq:mock_maass_theta_fnc_completion}
\wh\mT_{\bm\mu}(\t) = \wh\mT_{\bm\mu}^{[\bm{c_1},\bm{c_2}]}(\t) := \sqrt{\t_2}\sum_{\bm n\in\Z^2+\bm\mu} q^{Q(\bm n)} \int_{t_1}^{t_2} e^{-\pi B(\bm n,\bm{c(t)})^2\t_2} dt.
\end{equation}
Note that it satisfies the basic properties
\begin{equation*}
\wh\mT_{-\bm\mu}(\t) = \wh\mT_{\bm\mu}(\t) \quad\mbox{and}\quad \wh\mT_{\bm\mu+\bm\l}(\t) = \wh\mT_{\bm\mu}(\t) \quad\mbox{for all } \bm\l \in \Z^2.
\end{equation*}
More importantly, such functions are covariant under modular transformations as shown in \cite{Zw}.
\begin{thm}[Zwegers]\label{thm:F_hat_modular_transformations}
Let  $Q(\bm n)=\frac12 \bm n^T A\bm n$ be an even quadratic form of signature $(1,1)$ on $\mathbb{Z}^2$ and let $\bm{\mu} \in A^{-1} \Z^2 / \Z^2$. Then we have the following transformations:
\begin{equation*}
\wh{\mT}_{\bm\mu}(\t+1) = e^{2\pi iQ(\bm\mu)}  \wh{\mT}_{\bm\mu}(\t),
\qquad 
\wh{\mT}_{\bm\mu}\left(-\frac1\t\right) = \frac{1}{\sqrt{|\det(A)|}} \sum_{\bm\nu\in A^{-1}\Z^2/\Z^2} e^{-2\pi iB(\bm\mu,\bm\nu)}  \wh{\mT}_{\bm\nu}(\t).
\end{equation*}
\end{thm}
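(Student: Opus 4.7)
My plan is to treat the two transformations separately. For the $T$-transformation, decomposing $\bm n = \bm\mu + \bm\lambda$ with $\bm\lambda \in \Z^2$ gives $Q(\bm n) = Q(\bm\mu) + B(\bm\mu,\bm\lambda) + Q(\bm\lambda)$. The hypothesis $\bm\mu \in A^{-1}\Z^2$ ensures $B(\bm\mu,\bm\lambda) = (A\bm\mu)^T\bm\lambda \in \Z$, and $Q$ being even ensures $Q(\bm\lambda) \in \Z$. Therefore $e^{2\pi iQ(\bm n)} = e^{2\pi iQ(\bm\mu)}$ is constant in $\bm n$, factoring cleanly out of $q^{Q(\bm n)}$ under $\t\mapsto\t+1$, while the remaining data $\t_2$ and $B(\bm n,\bm c(t))$ are unchanged.

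For the $S$-transformation, the strategy is Poisson summation in $\bm n$ applied termwise inside the $t$-integral. Absorbing all factors into a single complex Gaussian, each summand takes the form $e^{-\pi \bm n^T M(\t,t)\bm n}$, where
\begin{equation*}
M(\t,t) := -i\t A + \t_2 (A\bm c(t))(A\bm c(t))^T.
\end{equation*}
A direct check using \eqref{eq:ct_definition} shows that $\mathrm{Re}(M(\t,t)) = \t_2[A + (A\bm c(t))(A\bm c(t))^T]$ is positive definite, uniformly on the compact interval $[t_1,t_2]$, so Poisson summation applies and may be interchanged with the $t$-integral. Sylvester's determinant lemma together with $\bm c^T A\bm c = 2Q(\bm c) = -2$ yields $\det M(\t,t) = -|\t|^2\det(A) > 0$.

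The crucial structural input is the identity $A\,M(-1/\t,\,t)^{-1}\,A = M(\t,t)$, which I plan to derive via the Sherman--Morrison formula applied to $M(-1/\t,t) = (i/\t)A + (\t_2/|\t|^2)(A\bm c)(A\bm c)^T$; the collapse $\bar\t+2i\t_2 = \t$ forces the rank-one correction to simplify cleanly to $\t_2\bm c\bm c^T$, so that conjugation by $A$ produces the right-hand side. Granting this, starting from $\wh\mT_{\bm\mu}(-1/\t)$ and applying Poisson, the substitution $\bm\kappa = A\bm m$ with $\bm m \in A^{-1}\Z^2$ converts the dual Gaussian weight into $e^{-\pi \bm m^T M(\t,t)\bm m}$ and the character $e^{2\pi i\bm\kappa^T\bm\mu}$ into $e^{2\pi i B(\bm m,\bm\mu)}$. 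Partitioning $A^{-1}\Z^2$ into cosets $\bm\nu+\Z^2$ for $\bm\nu\in A^{-1}\Z^2/\Z^2$ (the shift by $\bm\lambda\in\Z^2$ is absorbed using $B(\bm\mu,\bm\lambda)\in\Z$) reassembles everything into $\sum_{\bm\nu} e^{2\pi i B(\bm\mu,\bm\nu)}\wh\mT_{\bm\nu}(\t)$, while the prefactor $\sqrt{\t_2'}/\sqrt{\det M(-1/\t,t)}$ simplifies to $1/\sqrt{|\det A|}$, with $\t_2' = \t_2/|\t|^2$. The sign flip to $e^{-2\pi i B(\bm\mu,\bm\nu)}$ stated in the theorem then comes from relabeling $\bm\nu\mapsto-\bm\nu$ and invoking the symmetry $\wh\mT_{-\bm\nu}=\wh\mT_{\bm\nu}$.

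The main conceptual crux is spotting $AM(-1/\t,t)^{-1}A = M(\t,t)$: the computation is short but drives the entire argument. Technically, the only nontrivial point is that Poisson summation commutes with the $t$-integral, which follows from the uniform positive definiteness of $\mathrm{Re}(M(\t,t))$ on $[t_1,t_2]$ and the resulting Gaussian decay.
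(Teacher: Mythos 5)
Your argument is correct. Note first that the paper does not prove this statement at all: it is quoted as a theorem of Zwegers and the proof is delegated to \cite{Zw}, where it is obtained by writing $\wh{\mT}_{\bm\mu}$ as a $t$-integral of a signature-$(1,1)$ Siegel--Narain theta function with majorant depending on $\bm{c(t)}$ and invoking the known (Poisson-summation) transformation of such theta functions. Your proposal unpacks exactly that computation, and the key steps check out: $\mathrm{Re}(M(\t,t))=\t_2\bigl[A+(A\bm{c(t)})(A\bm{c(t)})^T\bigr]$ is indeed positive definite (conjugating by $P$ from \eqref{eq:ct_definition} reduces it to a $2\times2$ matrix with determinant $4$, positive trace, and positive $(1,1)$-entry), the matrix determinant lemma with $\bm c^TA\bm c=-2$ gives $\det M(\t,t)=-|\t|^2\det(A)>0$ (so the branch of the square root in Poisson summation is the positive one throughout, by continuity from $\t=i$), and Sherman--Morrison does yield $M(-1/\t,t)^{-1}=-i\t A^{-1}+\t_2\,\bm c\bm c^T$, hence $A\,M(-1/\t,t)^{-1}A=M(\t,t)$. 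The only cosmetic imprecision is the statement that $\sqrt{\t_2'}/\sqrt{\det M(-1/\t,t)}$ ``simplifies to $1/\sqrt{|\det A|}$'': it equals $\sqrt{\t_2}/\sqrt{|\det A|}$, with the factor $\sqrt{\t_2}$ being precisely what is reabsorbed into the definition of $\wh{\mT}_{\bm\nu}(\t)$; the final constant is as you state. The relabeling $\bm\nu\mapsto-\bm\nu$ using $\wh{\mT}_{-\bm\nu}=\wh{\mT}_{\bm\nu}$ to produce the sign in the exponent is also the standard way to match Zwegers' normalization. So this is a complete and correct proof along essentially the same lines as the source the paper cites.
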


The multipliers in Theorem \ref{thm:F_hat_modular_transformations} agree with the Weil representation associated to $Q$, so we can just as easily state the modular transformation under any element of $\SL_2(\Z)$ (see e.g.~ \cite{CS} for further details on Weil representations). We state this result more explicitly for later reference.
\begin{thm}\label{thm:That_general_modular_transformations}
Let  $Q(\bm n)=\frac12 \bm n^T A\bm n$ be an even quadratic form of signature $(1,1)$ on $\mathbb{Z}^2$ and let $\bm{\mu} \in A^{-1} \Z^2 / \Z^2$. Then, for any $M = \pmat{a&b\\c&d}\in\SL_2(\Z)$ we have
\begin{equation*}
\wh{\mT}_{\bm{\mu}}\lp \frac{a \tau + b}{c \tau + d} \rp 
=
\sum_{\bm{\nu} \in A^{-1} \Z^2 / \Z^2} 
\psi_M (\bm{\mu}, \bm{\nu})  \wh{\mT}_{\bm{\nu}} (\tau) ,
\end{equation*}
where 
\begin{equation*}
\psi_M(\bm\mu,\bm\nu) :=
\begin{cases}
e^{2 \pi i ab Q(\bm{\mu})} \delta_{\bm{\mu}, \sgn (d) \, \bm{\nu}} \quad &\mbox{if } c=0,\\[3pt]
\frac{1}{|c|  \sqrt{|\det(A)|}} 
\displaystyle\sum_{\bm{m} \in \Z^2 / c\Z^2}
e^{\frac{2\pi i}{c} \lp a Q(\bm{m}+\bm{\mu}) - B(\bm{m}+\bm{\mu}, \bm{\nu}) + d  Q(\bm{\nu}) \rp}
\quad &\mbox{if } c\neq 0.
\end{cases}
\end{equation*}
\end{thm}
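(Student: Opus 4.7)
The strategy is to leverage Theorem \ref{thm:F_hat_modular_transformations}, which already gives the transformations under the generators $T = \pmat{1&1\\0&1}$ and $S = \pmat{0&-1\\1&0}$ of $\SL_2(\Z)$. Since $T$ and $S$ generate the group, the transformation multiplier of $\wh\mT_{\bm\mu}$ under any $M \in \SL_2(\Z)$ is determined by composing the known multipliers along any factorization of $M$ as a word in $T$ and $S$. What remains is to show that this composition matches the explicit Weil-representation formula $\psi_M(\bm\mu, \bm\nu)$ given in the statement.

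First I would handle the case $c = 0$, where $M = \pm T^b$. Iterating the $T$-transformation from Theorem \ref{thm:F_hat_modular_transformations} immediately yields $\wh\mT_{\bm\mu}(\tau + b) = e^{2\pi i b Q(\bm\mu)} \wh\mT_{\bm\mu}(\tau)$, and the elementary symmetry $\wh\mT_{-\bm\mu}(\tau) = \wh\mT_{\bm\mu}(\tau)$ takes care of any $-I$ factor. Using $Q(\bm\mu) = Q(-\bm\mu)$ together with $a = d = \pm 1$, the outcome matches the claim $\psi_M(\bm\mu, \bm\nu) = e^{2\pi i a b Q(\bm\mu)} \delta_{\bm\mu, \sgn(d) \bm\nu}$. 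Next I would check that the $c \neq 0$ formula evaluated at $M = S$ (so $a = 0$, $b = -1$, $c = 1$, $d = 0$) collapses to the single-term sum $\psi_S(\bm\mu, \bm\nu) = \frac{1}{\sqrt{|\det(A)|}} e^{-2\pi i B(\bm\mu, \bm\nu)}$, which agrees with Theorem \ref{thm:F_hat_modular_transformations}.

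With both base cases in place, it suffices to verify that the assignment $M \mapsto \psi_M$ defined in the theorem satisfies the cocycle identity
\begin{equation*}
\sum_{\bm\lambda \in A^{-1}\Z^2/\Z^2} \psi_{M_1}(\bm\mu, \bm\lambda)  \psi_{M_2}(\bm\lambda, \bm\nu) = \psi_{M_1 M_2}(\bm\mu, \bm\nu) .
\end{equation*}
This is the classical statement that $\psi$ is a realization of the Weil representation attached to the discriminant form $(A^{-1}\Z^2/\Z^2, Q \bmod \Z)$, and it can be verified by standard Gauss sum manipulations as in \cite{CS}. Once established, together with the base-case identifications above it implies that $\psi_M$ agrees with the iterated transformation multiplier of $\wh\mT_{\bm\mu}$ for every $M \in \SL_2(\Z)$.

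The main obstacle is the cocycle verification in the last step: it requires re-indexing the double sum over $\Z^2/c_1\Z^2 \times \Z^2/c_2\Z^2$ as a sum over $\Z^2/(c_1 a_2 + d_1 c_2)\Z^2$, evaluating the resulting Gauss sums by reciprocity, and carefully tracking the normalization factor $1/(|c|\sqrt{|\det(A)|})$ through composition. Since this is a purely lattice-theoretic computation independent of the specific structure of $\wh\mT_{\bm\mu}$, its bulk can be outsourced to the Weil-representation literature; one then only needs to confirm that the precise normalization matches Zwegers' convention on the generators $T$ and $S$.
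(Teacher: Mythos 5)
Your proposal is correct and follows essentially the same route the paper takes: the paper derives Theorem \ref{thm:That_general_modular_transformations} from Theorem \ref{thm:F_hat_modular_transformations} precisely by observing that the generator multipliers agree with the Weil representation attached to $Q$ and then citing \cite{CS} for the explicit formula on general elements of $\SL_2(\Z)$. Your write-up merely makes explicit the base-case checks at $\pm T^b$ and $S$ and the cocycle identity that the paper leaves to the reference.
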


The functions $\wh{\mT}_{\bm{\mu}}$ are closely related to the mock Maass theta functions defined by \cite{Zw}\footnote{
The exclusion of $\bm{n} = \bm{0}$ from the sums and the addition of the final term $(t_2-t_1) \sqrt{\t_2}  \d_{\bm\mu\in\Z^2}$ differs from \cite{Zw}, but this is an appropriate definition in view of Proposition \ref{prop:quadzero} below.
}${}^{,}$\footnote{
Here and throughout the paper $\sgn(x):=\frac{x}{|x|}$ for $x\in\R\setminus\{0\}$ and $\sgn(0):=0$.
}
\begin{align}
\mT_{\bm\mu}(\t) :=&
\frac12\sgn(t_2-t_1)\sqrt{\t_2} \sum_{\substack{\bm n\in \Z^2+\bm\mu \\ \bm{n} \neq \bm{0} }}   
\big( 1-\sgn(B(\bm n,\bm{c_1}))\sgn (B  (\bm n,\bm{c_2}  ) ) K_0(2\pi Q(\bm n)\t_2) e^{2\pi iQ(\bm n)\t_1}
\notag \\
+& \frac12\sgn(t_2-t_1)\sqrt{\t_2} \mspace{-8mu}
 \sum_{\substack{\bm n\in \Z^2+\bm\mu \\ \bm{n} \neq \bm{0} }}  \mspace{-12mu}
 \Big( 1-\sgn \Big(B  \Big(\bm n,\bm{c_1^\perp} \Big) \Big) \sgn\Big(B\big(\bm n,\bm{c_2^\perp}\Big)\Big) \Big) K_0(-2\pi Q(\bm n)\t_2) e^{2\pi iQ(\bm n)\t_1}
\notag \\
& \ + (t_2-t_1) \sqrt{\t_2} \d_{\bm\mu\in\Z^2} .
\label{eq:defn_mock_maass_theta_fnc}
\end{align}
These mock Maass theta functions are eigenfunctions of hyperbolic Laplacian with eigenvalue $\frac{1}{4}$ and hence are related to the theta functions we encounter in equation \eqref{eq:indefinite_false_theta} through equation \eqref{eq:defn_u_and_error} and Proposition \ref{prop:uj_fourier_expansion}. More specifically,
\begin{equation*}
\mt_{\bm\mu}(\t) := -\frac2\pi\int_\t^{i\infty} [\mT_{\bm\mu}(z),R_\t(z)] 
\end{equation*}
satisfies
\begin{align}
\mt_{\bm\mu}(\t) :=&
- \frac{t_2-t_1}{\pi} \d_{\bm\mu\in\Z^2}  +
\frac{\sgn(t_2-t_1)}{2} \sum_{\substack{\bm n\in \Z^2+\bm\mu \\ \bm{n} \neq \bm{0} }}   
\big( 1-\sgn(B(\bm n,\bm{c_1}))\sgn (B  (\bm n,\bm{c_2}  ) ) \big) q^{Q(\bm n)} .
\label{eq:defn_mock_maass_q_series}
\end{align}

Our first step towards understanding the relation between the mock Maass theta functions $\mT_{\bm\mu}$ and their completions $\wh{\mT}_{\bm{\mu}}$ is the following result (following from Lemma 4.1 of \cite{Zw}). 
\begin{prop}\label{prop:quadzero}
Let  $Q(\bm n)=\frac12 \bm n^T A\bm n$ be an even quadratic form of signature $(1,1)$ on $\mathbb{Z}^2$ and let $\bm{\mu} \in A^{-1} \Z^2 / \Z^2$. Also assume that $Q(\bm n)=0$ has no solutions on $\Q^2$ except for $\bm n=\bm0$. Then
\begin{equation*}
\wh{\mT}_{\bm\mu}(\t) = \mT_{\bm\mu}(\t) + \varphi_{\bm{\mu}}^{[\bm{c_1}]}(\t) - \varphi_{\bm\mu}^{[\bm{c_2}]}(\t),
\end{equation*}
where
\begin{equation*}
\varphi_{\bm\mu}^{[\bm{c_0}]}(\t) := \sqrt{\t_2}\sum_{\bm n\in\Z^2+\bm\mu} \a_{t_0} \left(\bm n\sqrt{\t_2}\right) q^{Q(\bm n)}
\end{equation*}
with
\begin{equation*}
\alpha_{t_0}(\bm n) :=
\begin{cases}
\int_{t_0}^{\infty} e^{-\pi B(\bm n, \bm{c(t)})^2}dt \quad & \textnormal{if } B(\bm n,\bm{c_0})B\left(\bm n,\bm{c_0^\perp}\right)>0, \\
-\int_{-\infty}^{t_0} e^{-\pi B(\bm n, \bm{c(t)})^2}dt
\quad & \textnormal{if } B(\bm n,\bm{c_0})B\left(\bm n,\bm{c_0^\perp}\right)<0, \\
0 \quad & \textnormal{otherwise}.
\end{cases}
\end{equation*}
\end{prop}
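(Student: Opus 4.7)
The identity is a term-by-term comparison of the two sides, so I would expand each of $\wh{\mT}_{\bm\mu}(\t)$, $\mT_{\bm\mu}(\t)$, and $\varphi_{\bm\mu}^{[\bm{c_1}]}(\t) - \varphi_{\bm\mu}^{[\bm{c_2}]}(\t)$ as a sum over $\bm n \in \Z^2 + \bm\mu$ and match coefficients for each $\bm n$. Under the anisotropy assumption on $Q$, any nonzero $\bm m := \bm n + \bm\mu \in A^{-1}\Z^2$ satisfies $Q(\bm m) \neq 0$, and the only $\bm n$ that can give $\bm m = \bm 0$ is $\bm n = \bm 0$ with $\bm\mu \in \Z^2$; in that case $B(\bm 0, \bm{c(t)}) \equiv 0$, so the $\bm n = \bm 0$ contribution to $\wh{\mT}_{\bm\mu}$ is $\sqrt{\t_2}(t_2 - t_1)$ while $\alpha_{t_j}(\bm 0) = 0$ by the third clause of its definition. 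This matches the isolated $(t_2 - t_1)\sqrt{\t_2}\d_{\bm\mu \in \Z^2}$ term in $\mT_{\bm\mu}$.

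For nonzero $\bm m$ I would first evaluate the complete Gaussian
\begin{equation*}
I_\infty(\bm m) := \int_{-\infty}^\infty e^{-\pi B(\bm m, \bm{c(t)})^2 \t_2}\, dt
\end{equation*}
in closed form. Using the parametrization \eqref{eq:ct_definition} and writing $P\bm m = (a,b)^T$, a direct computation gives $B(\bm m, \bm{c(t)})^2 = 4 Q(\bm m) \sinh^2(t - t^*)$ when $Q(\bm m) > 0$ (with $\tanh t^* = b/a$), and $B(\bm m, \bm{c(t)})^2 = -4 Q(\bm m) \cosh^2(t - t^{**})$ when $Q(\bm m) < 0$ (with $\tanh t^{**} = a/b$). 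Substituting $u = 2(t - t^*)$ or $u = 2(t - t^{**})$ and using $K_0(x) = \int_0^\infty e^{-x \cosh u}\, du$ then yields, in both cases,
\begin{equation*}
I_\infty(\bm m) = e^{2\pi Q(\bm m) \t_2} K_0 \! \left( 2\pi |Q(\bm m)|\t_2 \right),
\end{equation*}
so that $q^{Q(\bm m)} I_\infty(\bm m) = K_0 ( 2\pi |Q(\bm m)| \t_2 ) e^{2\pi i Q(\bm m) \t_1}$, which is precisely the Bessel building block of $\mT_{\bm\mu}$.

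The remaining step is to decompose $\int_{t_1}^{t_2} = \int_{-\infty}^\infty - \int_{-\infty}^{t_1} - \int_{t_2}^\infty$ and identify the two tails with $\alpha_{t_1}(\bm m\sqrt{\t_2})$ and $-\alpha_{t_2}(\bm m\sqrt{\t_2})$. The sign dichotomy in the definition of $\alpha_{t_0}$ is exactly what is needed: when $Q(\bm m) > 0$, the map $t \mapsto B(\bm m, \bm{c(t)})$ changes sign once (at $t^*$) while $t \mapsto B(\bm m, \bm{c^\perp(t)})$ has constant sign, so $B(\bm m, \bm{c_0})B(\bm m, \bm{c_0^\perp})$ is positive precisely when $t_0 > t^*$ (giving the tail $\int_{t_0}^\infty$) and negative when $t_0 < t^*$ (giving the tail $-\int_{-\infty}^{t_0}$); for $Q(\bm m) < 0$ the roles of $\bm c$ and $\bm{c^\perp}$ are interchanged and the sign change occurs at $t^{**}$. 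A short case analysis then shows that the $I_\infty(\bm m)$ contribution survives precisely when $t_1$ and $t_2$ lie on opposite sides of $t^*$ (respectively $t^{**}$), which is captured exactly by the indicators $\frac12(1 - \sgn B(\bm m, \bm{c_1}) \sgn B(\bm m, \bm{c_2}))$ in the $Q > 0$ sum and $\frac12(1 - \sgn B(\bm m, \bm{c_1^\perp}) \sgn B(\bm m, \bm{c_2^\perp}))$ in the $Q < 0$ sum of $\mT_{\bm\mu}$.

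The main obstacle is the sign bookkeeping, particularly the boundary cases $B(\bm m, \bm{c_j}) = 0$ in which $\alpha_{t_j}$ vanishes but the corresponding Bessel prefactor reads $\frac12$ rather than $0$ or $1$ (the half-integral $\int_{t^*}^\infty$ arising from the decomposition then fills in the missing half). One also verifies that the overall $\sgn(t_2 - t_1)$ in $\mT_{\bm\mu}$ absorbs the antisymmetry in $(t_1,t_2)$, so that it suffices to treat $t_1 < t_2$. Interchange of the sum over $\bm n$ with the $t$-integral is justified by the uniform Gaussian decay of the integrand in $\bm m$, and the same estimate controls the tail integrals defining $\alpha_{t_j}$.
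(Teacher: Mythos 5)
Your proof is correct and is essentially the argument behind Lemma 4.1 of \cite{Zw}, which is all the paper itself invokes here: split $\int_{t_1}^{t_2}=\int_{-\infty}^{\infty}-\int_{-\infty}^{t_1}-\int_{t_2}^{\infty}$, evaluate the full Gaussian as $e^{2\pi Q(\bm n)\t_2}K_0(2\pi|Q(\bm n)|\t_2)$ via the hyperbolic substitution, and match the tails with $\a_{t_1}-\a_{t_2}$ through the sign dichotomy. You also correctly account for the one point where this paper's statement differs from \cite{Zw}, namely the $\bm n=\bm 0$ term producing $(t_2-t_1)\sqrt{\t_2}\,\d_{\bm\mu\in\Z^2}$ while $\a_{t_j}(\bm 0)=0$, and for the boundary case $B(\bm n,\bm{c_j})=0$ where the coefficient $\tfrac12$ is supplied by the symmetry of the Gaussian about $t^*$.
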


In summary, we can follow the nomenclature for mock modular forms and refer to $\varphi_{\bm\mu}^{[\bm{c_1}]}$ and $\varphi_{\bm\mu}^{[\bm{c_2}]}$ as {\it shadow contributions} that correct the modular behavior of the mock Maass theta function $\mT_{\bm\mu}$. They do however break the property of being a Laplacian eigenfunction (in a way similar to the breaking of holomorphy property by
completions of mock modular forms).

Now for our purposes, the property of being a Laplacian eigenfunction is crucial for making contact with $q$-series such as \eqref{eq:defn_mock_maass_q_series} that appear in our treatment of the functions $L_1,\dots,L_{12}$. In turn, to prove modular properties for (linear combinations of) $\mt_{\bm\mu}$ as in Section \ref{sec:maass_forms}, we need the corresponding shadow contributions to $\wh{\mT}_{\bm\mu}$ to vanish. One criterion that can be used to prove such a statement is given by Lemma 5.1 of \cite{Zw}. It states that if $\g\in\GL_2(\Z)$ is such that $\g^TA\g=A$, $\det(\g)=+1$, and $\g C_Q = C_Q$, then we have
\begin{equation*}
\varphi_{\g\bm a}^{[\g \bm{c}]}(\t) = \varphi_{\bm a}^{[\bm{c}]}(\t).
\end{equation*}
In our work, we need a modified version of this criterion, which we state and prove next.
\begin{lemma}\label{lemma:sig}
Let  $Q(\bm n)=\frac12\bm n^TA\bm n$ be an even quadratic form of signature $(1,1)$, let $\bm{c_3},\bm{c_4}\in C_Q$, and let $\g\in\GL_2(\Z)$ be such that $\g^TA\g=A$, $\det(\g)=-1$, and $\g\bm{c_3}=\pm\bm{c_4}$. Then 
\begin{equation*}
\varphi_{\g\bm a}^{[\bm{c_4}]}(\t) = -\varphi_{\bm a}^{[\bm{c_3}]}(\t).
\end{equation*}
\end{lemma}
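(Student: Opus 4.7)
The plan is to reduce the identity to a pointwise statement about $\a$ and verify it by a single change of variables in the defining integral. First, since $\g\in\GL_2(\Z)$, the map $\bm m \mapsto \g\bm m$ is a bijection $\Z^2+\bm a \to \Z^2+\g\bm a$, and the hypothesis $\g^T A\g=A$ gives $Q(\g\bm m)=Q(\bm m)$ as well as $B(\g\bm m,\bm v)=B(\bm m,\g^{-1}\bm v)$. Applying this change of summation variable in $\varphi_{\g\bm a}^{[\bm{c_4}]}(\t)$ and matching the common prefactors $\sqrt{\t_2}$ and $q^{Q(\bm n)}$, the claim reduces to the pointwise identity $\a_{t_4}(\g\bm v)=-\a_{t_3}(\bm v)$ for every $\bm v \in \R^2$ (the rescaling by $\sqrt{\t_2}$ in the argument of $\a$ is harmless).

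Next, I would study the action of $\g^{-1}$ on the parametrizations $\bm{c(t)},\bm{c^\perp(t)}$. Write $\g\bm{c_3}=\ve\bm{c_4}$ with $\ve\in\{\pm1\}$. The invariance $\g^T A \g = A$ forces $Q(\g^{-1}\bm{c(t)})=-1$, and connectedness of $C_Q$ together with the sign $\ve$ appearing at $t=t_4$ gives $\g^{-1}\bm{c(t)}=\ve\bm{c(f(t))}$ for a smooth function $f$ with $f(t_4)=t_3$. Differentiating yields $\g^{-1}\bm{c^\perp(t)}=\ve f'(t)\bm{c^\perp(f(t))}$, and the identity $Q(\bm{c^\perp(t)})=1$ then forces $f'(t)^2=1$. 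Comparing the determinants of the matrices with columns $(\bm{c(t)},\bm{c^\perp(t)})$ and $(\g^{-1}\bm{c(t)},\g^{-1}\bm{c^\perp(t)})$ and invoking $\det(\g^{-1})=-1$ pins down $f'(t)=-1$, hence $f(t)=t_3+t_4-t$ independently of $\ve$, and in particular $\g^{-1}\bm{c_4^\perp}=-\ve\bm{c_3^\perp}$.

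These relations yield the two identities needed for the computation. First, $B(\g\bm v,\bm{c(t)})=\ve B(\bm v,\bm{c(t_3+t_4-t)})$, so the squared quantity in the integrands of $\a_{t_4}(\g\bm v)$ and $\a_{t_3}(\bm v)$ are related by the reparametrization $s=t_3+t_4-t$ regardless of $\ve$. Second, using also $\g^{-1}\bm{c_4^\perp}=-\ve\bm{c_3^\perp}$, one computes $B(\g\bm v,\bm{c_4})B(\g\bm v,\bm{c_4^\perp})=-B(\bm v,\bm{c_3})B(\bm v,\bm{c_3^\perp})$, so the case split in the definition of $\a$ is swapped between the ``positive'' and ``negative'' branches, while the ``otherwise'' case is self-paired (and both sides of the target identity vanish in that case).

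Finally, I would apply the substitution $s=t_3+t_4-t$ to the integral in $\a_{t_4}(\g\bm v)$ in each nontrivial case. The interval $t\in(-\infty,t_4]$ corresponds to $s\in[t_3,\infty)$ and vice versa; the Jacobian $dt=-ds$ combined with the reversal of limits matches the ``positive'' branch of one function to the ``negative'' branch of the other, and the leading minus sign built into the ``negative'' branch of $\a$ then delivers $\a_{t_4}(\g\bm v)=-\a_{t_3}(\bm v)$ uniformly. The main technical hurdle is the orientation step of the second paragraph: one must correctly correlate the sign $\ve$ with the monotonicity of $f$ via $\det(\g^{-1})=-1$, and then verify that the quantities actually entering the definition of $\a$ are insensitive to $\ve$, so that the single substitution $s=t_3+t_4-t$ handles both cases $\g\bm{c_3}=+\bm{c_4}$ and $\g\bm{c_3}=-\bm{c_4}$ at once.
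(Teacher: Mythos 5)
Your proof is correct and follows essentially the same route as the paper: both reduce the claim to the pointwise identity $\a_{t_4}(\g\bm n)=-\a_{t_3}(\bm n)$ and establish it via the substitution $t\mapsto t_3+t_4-t$ together with the sign flip of $B(\cdot,\bm{c})B(\cdot,\bm{c^\perp})$. The only differences are organizational: you derive the reparametrization identity $\g^{-1}\bm{c(t)}=\ve\,\bm{c(t_3+t_4-t)}$ from an explicit orientation argument using $\det(\g)=-1$ (the paper asserts its analogue as ``not hard to see'') and handle both signs $\g\bm{c_3}=\pm\bm{c_4}$ uniformly, whereas the paper treats only $\g\bm{c_3}=-\bm{c_4}$ directly and reduces the other case to it by writing $\g=(-\g)(-I_2)$ and using $\varphi_{-\bm a}^{[\bm{c_3}]}=\varphi_{\bm a}^{[\bm{c_3}]}$.
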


\begin{proof}
We first assume that $\g \bm{c_3} = - \bm{c_4}$ and note that it is not hard to see that 
\begin{equation}\label{eq:gamma}
\g \bm{c(t)}
=
-\bm{c(t_3+t_4-t)}.
\end{equation}
Then we have (by changing variables as $t \mapsto t_3+t_4-t$ and using \eqref{eq:gamma} and that $\g^TA\g=A$)
\begin{equation*}
\int_{t_4}^\infty e^{-\pi B(\g\bm n,\bm{c(t)})^2\t_2} dt 
= \int_{-\infty}^{t_3} e^{-\pi B(\g\bm n,\bm{c(t_3+t_4-t)})^2\t_2} dt 
= \int_{-\infty}^{t_3} e^{-\pi B(\bm n,\bm{c(t)})^2\t_2} dt.
\end{equation*}
Similarly, we have
\begin{equation*}
\int_{-\infty}^{t_4} e^{-\pi B(\g \bm n, \bm{c(t)})^2\t_2}dt 
= 
\int_{t_3}^{\infty} e^{-\pi B(\g \bm n, \bm{c(t_3+t_4-t)})^2\t_2}dt
=
\int_{t_3}^{\infty} e^{-\pi B(\bm n, \bm{c(t)})^2\t_2}dt.
\end{equation*}
Using these identities with the fact that
\begin{equation*}
B(\g\bm n,\bm{c_4}) B\left(\g\bm n,\bm{c_4^\perp}\right) = B(\g\bm n,-\g\bm{c_3}) B\left(\g\bm n,\g\bm{c_3^\perp}\right) = -B(\bm n,\bm{c_3}) B\left(\bm n,\bm{c_3^\perp}\right),
\end{equation*}
we immediately see that $\a_{t_4}(\g\bm n\sqrt{\t_2})=-\a_{t_3}(\bm n\sqrt{\t_2})$ and hence that $\varphi_{\g\bm a}^{[\bm{c_4}]}(\t)=-\varphi_{\bm a}^{[\bm{c_3}]}(\t)$.

For the case $\g\bm{c_3}=\bm{c_4}$, we can write $\g = (-\g) (-I_2)$, where $-\g$ satisfies the hypotheses of the lemma with $(-\g)\bm{c_3}=-\bm{c_4}$. Then the first part of the lemma, together with the trivial identity $\varphi_{-\bm{a}}^{[\bm{c_3}]}(\tau) = \varphi_{\bm{a}}^{[\bm{c_3}]}(\tau)$ (see Lemma 5.1 of \cite{Zw}) implies the result.
\end{proof}

\section{The functions $L_1$\textendash$L_4$}\label{sec:L1_L4_analysis}

In this section, we begin our investigation of the twelve functions introduced by Lovejoy and Osburn \cite{LoOs}. We start with the functions $L_1,\dots,L_4$ that count ideals in the ring of integers of $\Q(\sqrt{2})$.

\subsection{Rewriting as theta functions}\label{sec:L1L4_rearrangement}
We start with the following lemma, which rewrites $L_1,\dots,L_4$ in terms of theta functions as in equation \eqref{eq:indefinite_false_theta}.

\begin{lem}\label{lem:l1l4}
We have
\begin{align*}
q^{-\frac{17}{32}}L_1(q) &= \frac12\left(\sum_{\bm n\in\Z^2+\left(\frac{1}{16}\ \frac38\right)^T} + \sum_{\bm n\in\Z^2+\left(\frac{7}{16}\ \frac18\right)^T}\right) (1+\sgn(n_1+n_2)\sgn(n_1-n_2)) q^{8n_1^2-4n_2^2},\\
q^{\frac{7}{32}}L_2(q) &= \frac12\left(\sum_{\bm n\in\Z^2+\left(\frac{3}{16}\ \frac18\right)^T} + \sum_{\bm n\in\Z^2+\left(\frac{5}{16}\ \frac38\right)^T}\right) (1+\sgn(n_1+n_2)\sgn(n_1-n_2)) q^{8n_1^2-4n_2^2},\\
q^{-\frac{33}{32}}L_3(q) &= \frac12\left(\sum_{\bm n\in\Z^2+\left(\frac{1}{16}\ \frac18\right)^T} + \sum_{\bm n\in\Z^2+\left(\frac{7}{16}\ \frac38\right)^T}\right) (1+\sgn(n_1+n_2)\sgn(n_1-n_2)) q^{8n_1^2-4n_2^2},\\
q^{-\frac{9}{32}}L_4(q) &= \frac12\left(\sum_{\bm n\in\Z^2+\left(\frac{3}{16}\ \frac38\right)^T} + \sum_{\bm n\in\Z^2+\left(\frac{5}{16}\ \frac18\right)^T}\right) (1+\sgn(n_1+n_2)\sgn(n_1-n_2)) q^{8n_1^2-4n_2^2}.
\end{align*}
\end{lem}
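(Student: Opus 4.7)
The plan is to pass from the $q$-hypergeometric double sums defining $L_1,\dots,L_4$ to false-indefinite theta representations via Bailey-pair identities of the type used in \cite{ADH,Co,LoOs}, and then to make a linear change of summation variables matching the stated quadratic form $Q(\bm n) = 8 n_1^2 - 4 n_2^2$ with the stated residue classes in $\tfrac{1}{16}\Z \times \tfrac{1}{8}\Z$.

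First I would apply a suitable Bailey-pair identity to handle the denominator factors of the form $\frac{(q)_{n-1}}{(1-q^{2k\pm 1})(q)_{n-k}(q)_{k\mp 1}}$ (and the $(q)_n,(q)_k$ variants appearing in $L_2,L_4$), producing for each $j \in \{1,2,3,4\}$ a representation of the shape
\begin{equation*}
q^{-s_j} L_j(q) \;=\; \sum_{n \ge 0} \sum_{|k| \le n + c_j} \varepsilon_j(n,k) \bigl(1 \pm q^{2k + r_j}\bigr) q^{\alpha_j n^2 + \beta_j k^2 + (\text{linear})} ,
\end{equation*}
where the Bailey factor $(1 \pm q^{2k+r_j})$ is the source of the indefinite-theta sign structure and $s_j \in \{\tfrac{17}{32},-\tfrac{7}{32},\tfrac{33}{32},\tfrac{9}{32}\}$ is forced by completing the square. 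Identities of essentially this shape already appear in \cite{LoOs} en route to the ideal-counting interpretation of these functions, so this step should largely amount to invoking and slightly rearranging that work.

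Second I would set $(n_1,n_2)$ equal to an explicit integer linear combination of $(n,k)$ (plus a fractional shift) chosen so that $\alpha_j n^2 + \beta_j k^2 + (\text{linear}) + s_j$ becomes $8 n_1^2 - 4 n_2^2$. The two disjoint cosets $\bm\mu \in \tfrac{1}{16}\Z \times \tfrac{1}{8}\Z$ listed on the right-hand side then arise from splitting by the parity of $n+k$, or equivalently from separating the two pieces $1$ and $\pm q^{2k+r_j}$ in the Bailey factor. Because $\bm\mu \notin \tfrac{1}{2}\Z^2$ for every shift appearing in the lemma, one never has $n_1 = \pm n_2$, so the weight $1 + \sgn(n_1+n_2)\sgn(n_1-n_2)$ is $2$ on the region $|n_1| > |n_2|$ and $0$ elsewhere; combined with the prefactor $\tfrac{1}{2}$, this exactly reproduces one copy of each Bailey-pair term after accounting for the $\bm n \mapsto -\bm n$ symmetry of both the quadratic form and the weight.

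The main obstacle is the combinatorial bookkeeping of boundary cases. The $(q)_{n-1}$ versus $(q)_n$ and the $(1-q^{2k-1})$ versus $(1-q^{2k+1})$ distinctions between the four functions lead to slightly different admissible $(n,k)$-ranges, and the stray $-1$ subtracted in the definition of $L_4$ must be tracked separately and absorbed into a degenerate term of the $(n_1,n_2)$-sum. To guard against sign and coset errors in the change of variables, I would verify each of the four identities by computing the first several $q$-coefficients on both sides and confirming agreement.
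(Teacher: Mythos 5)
Your proposal is correct and follows essentially the same route as the paper: the paper does not redo any Bailey-pair work but simply quotes the four double-sum representations of $q^{-17}L_1(q^{32})$, etc., from the proof of Theorem 1.1 of \cite{Lo}, drops the constraints $n\ge 0$ and $n\ge 1$ in favor of the triangle inequalities on $(n,j)\in\Z^2$, and reindexes via $n\mapsto -n$ (or $n\mapsto -n-1$) together with fractional shifts, exactly as in your second step. The only bookkeeping caveat, which you already flag, is that the two cosets in each identity come from the two separate groups of sums in Lovejoy's representations, while paired exponents such as $(16n-1)^2$ and $(16n+1)^2$ merge into a single coset under $n\mapsto -n$, rather than arising from a parity split in $n+k$.
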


\begin{proof} We start with the following identity given in the proof of Theorem 1.1 of \cite{Lo}
\begin{multline*}
q^{-17}L_1\left(q^{32}\right) = \sum_{\substack{n\ge1\\-n\le j\le n-1}} \left(q^{(16n-1)^2-2(8j+3)^2}+q^{(16n+1)^2-2(8j+3)^2}\right)\\
+ \sum_{\substack{n\ge0\\-n\le j\le n}} \left(q^{(16n+7)^2-2(8j+1)^2}+q^{(16n+9)^2-2(8j+1)^2}\right).
\end{multline*}
First note that the conditions $n\ge1$ and $n\ge0$ can be dropped with the understanding that we are summing over all pairs $(n,j)\in\Z^2$ satisfying the conditions $-n\le j\le n-1$ and $-n\le j\le n$, respectively. We then change $n\mapsto-n$ in the first sum and $n\mapsto-n-1$ in the forth sum to obtain the claim for $L_1 (q)$.

As the proof of the remaining identities are similar, we just state the related identities from the proof of Theorem 1.1 of \cite{Lo}:
\begin{multline*}
q^7L_2\left(q^{32}\right) = \sum_{\substack{n\ge0\\-n\le j \le n}} \left(q^{(16n+3)^2-2(8j+1)^2}+q^{(16n+13)^2-2(8j+1)^2}\right)\\
+ \sum_{\substack{n\ge0\\-n-1\le j\le n}} \left(q^{(16n+21)^2-2(8j+3)^2}+q^{(16n+11)^2-2(8j+3)^2}\right),
\end{multline*}
\begin{multline*}
q^{-33}L_3\left(q^{32}\right) = \sum_{\substack{n\ge1\\-n\le j\le n-1}} \left(q^{(16n-1)^2-2(8j+1)^2}+q^{(16n+1)^2-2(8j+1)^2}\right)\\
+ \sum_{\substack{n\ge0\\-n\le j\le n}} \left(q^{(16n+7)^2-2(8j+3)^2}+q^{(16n+9)^2-2(8j+3)^2}\right),
\end{multline*}
\begin{multline*}
q^{-9}L_4\left(q^{32}\right) = \sum_{\substack{n\ge0\\-n\le j\le n-1}} q^{(16n+3)^2-2(8j+3)^2} + \sum_{\substack{n\ge0\\-n-1\le j\le n}} q^{(16n+13)^2-2(8j+3)^2}\\
+ \sum_{\substack{n\ge-1\\-n-1\le j\le n+1}} q^{(16n+21)^2-2(8j+1)^2} + \sum_{\substack{n\ge0\\-n\le j\le n}} q^{(16n+11)^2-2(8j+1)^2}.\qedhere
\end{multline*}
\end{proof}

To express these functions more compactly, we define the quadratic form
\begin{equation}\label{eq:quadratic_form_example1}
Q(\bm n):=8n_1^2-4n_2^2
\end{equation}
and the vectors 
\begin{equation}\label{eq:cj_vector_example1}
\bm{c_1}:=\frac{1}{2\sqrt{2}}(-1\ \ 2)^T
\andd
\bm{c_2}:=\frac{1}{2\sqrt{2}}(1\ \ 2)^T
\end{equation}
in $C_Q$ (as defined in Remark \ref{rem:P_choice}).
For these choices, the theta function in equation \eqref{eq:defn_mock_maass_q_series} is
\begin{align*}\label{eq:vartheta_example1}
\mt_{\bm\mu}(\t) 
&= \frac12\sum_{\bm n\in\Z^2+\bm\mu} (1+\sgn(n_1+n_2)\sgn(n_1-n_2)) q^{8n_1^2-4n_2^2}
\quad
\mbox{for } \bm{\mu} \not\in \Z^2.
\end{align*}
We can now express the theta functions found in Lemma \ref{lem:l1l4} in terms of these $\mt_{\bm{\mu}}$'s.
\begin{lem}\label{lem:l1l4rew}
We have
\begin{equation*}
q^{-\frac{17}{32}}L_1(q) = \mf_3 (\t),  \quad
q^\frac{7}{32}L_2(q) = \mf_1 (\t), \quad
q^{-\frac{33}{32}}L_3(q) = \mf_0 (\t), \quad
q^{-\frac{9}{32}} L_4 (q) = \mf_2 (\t),
\end{equation*}
where
\begin{equation*}
\mf_j (\t) := \mt_{\bm{\mu_j}}(\t) + \mt_{\bm{\mu_j}+\bm\l}(\t) 
\quad \mbox{with }
\bm{\mu_j}:=\lp \frac{2j+1}{16}\ \ \frac18 \rp^T \mbox{ and } 
\bm\l:=\lp \frac12\ \ \frac12 \rp^T.
\end{equation*}
\end{lem}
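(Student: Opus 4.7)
The plan is to combine Lemma \ref{lem:l1l4}, which rewrites each $q^{\bullet}L_j(q)$ as a sum of two double theta series with sign weight $1+\sgn(n_1+n_2)\sgn(n_1-n_2)$ and exponent $8n_1^2-4n_2^2$, with the definition \eqref{eq:defn_mock_maass_q_series} of $\mt_{\bm\mu}$ specialized to the quadratic form \eqref{eq:quadratic_form_example1} and cone vectors \eqref{eq:cj_vector_example1}. The work is purely bookkeeping; the only thing to check is that the specialization of \eqref{eq:defn_mock_maass_q_series} yields precisely the summand in Lemma \ref{lem:l1l4}, and that the characteristic vectors in Lemma \ref{lem:l1l4} are the correct representatives of $\bm{\mu_j}$ and $\bm{\mu_j}+\bm\l$.

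First I would compute the bilinear form. With $A=\mathrm{diag}(16,-8)$ we get $B(\bm n,\bm m)=16 n_1 m_1-8 n_2 m_2$, so $B(\bm n,\bm{c_1})=-4\sqrt{2}(n_1+n_2)$ and $B(\bm n,\bm{c_2})=4\sqrt{2}(n_1-n_2)$. Consequently $1-\sgn(B(\bm n,\bm{c_1}))\sgn(B(\bm n,\bm{c_2}))=1+\sgn(n_1+n_2)\sgn(n_1-n_2)$, matching the sign factor in Lemma \ref{lem:l1l4}.

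Next I would verify the overall sign. Using $P=\mathrm{diag}(2\sqrt{2},2)$ from Remark \ref{rem:P_choice} and $\bm{c(t)}=P^{-1}(\sinh t,\cosh t)^T$, one reads off $t_1=-\arcsinh(1)$ and $t_2=\arcsinh(1)$, hence $\sgn(t_2-t_1)=+1$. Since the characteristic vectors appearing below, namely $\bm{\mu_j}$ and $\bm{\mu_j}+\bm\l$ for $j\in\{0,1,2,3\}$, all lie outside $\Z^2$, the indicator term $-\frac{t_2-t_1}{\pi}\d_{\bm\mu\in\Z^2}$ and the exclusion $\bm n\ne\bm 0$ in \eqref{eq:defn_mock_maass_q_series} are immaterial, so
\begin{equation*}
\mt_{\bm\mu}(\t)=\frac12\sum_{\bm n\in\Z^2+\bm\mu}\bigl(1+\sgn(n_1+n_2)\sgn(n_1-n_2)\bigr)q^{8n_1^2-4n_2^2}.
\end{equation*}

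Finally I would match characteristic vectors. Using $\mt_{-\bm\mu}=\mt_{\bm\mu}$ and $\Z^2$-periodicity, one checks for instance that $(7/16,1/8)^T=\bm{\mu_3}$ while $(1/16,3/8)^T\equiv -(\bm{\mu_3}+\bm\l)\pmod{\Z^2}$, so the two theta sums in Lemma \ref{lem:l1l4} for $q^{-17/32}L_1(q)$ reproduce $\mt_{\bm{\mu_3}}+\mt_{\bm{\mu_3}+\bm\l}=\mf_3(\t)$. The assignments for $L_2$, $L_3$, $L_4$ are analogous: $(3/16,1/8)^T=\bm{\mu_1}$ and $(5/16,3/8)^T\equiv -(\bm{\mu_1}+\bm\l)$ give $\mf_1$, the shifts $(1/16,1/8)^T=\bm{\mu_0}$ and $(7/16,3/8)^T\equiv -(\bm{\mu_0}+\bm\l)$ give $\mf_0$, and $(5/16,1/8)^T=\bm{\mu_2}$ with $(3/16,3/8)^T\equiv -(\bm{\mu_2}+\bm\l)$ give $\mf_2$. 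There is no real obstacle here; the only place one can slip is in the sign conventions of $B$ and $\sgn(t_2-t_1)$, which is why I would execute those two computations first.
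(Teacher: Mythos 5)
Your proposal is correct and follows essentially the same route as the paper: the paper's proof simply invokes the symmetry relations $\mt_{\bm\mu}=\mt_{-\bm\mu}=\mt_{(\mu_1\ -\mu_2)^T}$ (together with $\Z^2$-periodicity) to match the characteristic vectors of Lemma \ref{lem:l1l4} with $\bm{\mu_j}$ and $\bm{\mu_j}+\bm\l$, which is exactly your final step. Your additional verifications of the sign factor $1+\sgn(n_1+n_2)\sgn(n_1-n_2)$ and of $\sgn(t_2-t_1)=+1$ are correct and are implicit in the paper's specialization of \eqref{eq:defn_mock_maass_q_series} stated just before the lemma.
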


\begin{proof}
The claim follows from the relations
\begin{equation*}
\mt_{\bm\mu} = \mt_{-\bm\mu} = \mt_{(\mu_1\ -\mu_2)^T}. \qedhere
\end{equation*}
\end{proof}

\subsection{The corresponding Maass forms}

Next we follow \eqref{eq:defn_mock_maass_theta_fnc} and define the mock Maass theta functions $\mT_{\bm\mu}$ for the quadratic form $Q$ in \eqref{eq:quadratic_form_example1} and vectors $\bm{c_1}$, $\bm{c_2}$ in \eqref{eq:cj_vector_example1}.
Noting that the vectors in $C_Q^\perp$ corresponding to $\bm{c_1}$ and $\bm{c_2}$ are $\bm{c_1^\perp}=\frac12(1\ -1)^T$ and $\bm{c_2^\perp}=\frac12(1\ \ 1)^T$, we find (for $\bm\mu\not\in\Z^2$)
\begin{align*}
\mT_{\bm\mu}(\t) &= \frac{\sqrt{\t_2}}{2}\sum_{\bm n\in\Z^2+\bm\mu} (1+\sgn(n_1+n_2)\sgn(n_1-n_2)) K_0\left(2\pi\left(8n_1^2-4n_2^2\right)\t_2\right) e^{2\pi i\left(8n_1^2-4n_2^2\right)\t_1}\\
&\hspace{.5cm}+ \frac{\sqrt{\t_2}}{2}\sum_{\bm n\in\Z^2+\bm\mu} (1-\sgn(2n_1+n_2)\sgn(2n_1-n_2)) K_0\left(-2\pi\left(8n_1^2-4n_2^2\right)\t_2\right) e^{2\pi i\left(8n_1^2-4n_2^2\right)\t_1} .
\end{align*}
We also have their modular completions given by equation \eqref{eq:mock_maass_theta_fnc_completion} and Proposition \ref{prop:quadzero}:
\begin{equation*}
\wh{\mT}_{\bm\mu} = \mT_{\bm\mu}+ \varphi_{\bm\mu}^{[\bm{c_1}]}  - \varphi_{\bm\mu}^{[\bm{c_2}]}.
\end{equation*}
In the following it is also useful to note the trivial identities 
\begin{equation}\label{eq:F_L1_4_identifications}
\mT_{\bm\mu} = \mT_{-\bm\mu} = \mT_{{(-\mu_1\ \mu_2)^T}} = \mT_{{(\mu_1\ -\mu_2)^T}},\qquad \wh{\mT}_{\bm\mu} = \wh{\mT}_{-\bm\mu} = \wh{\mT}_{{(-\mu_1\ \mu_2)^T}} = \wh{\mT}_{{(\mu_1\ -\mu_2)^T}}.
\end{equation}

Our main interest is of course on the linear combinations of $\mt_{\bm\mu}$'s given in Lemma \ref{lem:l1l4rew}. So for $j\in\{0,1,2,3\}$ we form the corresponding linear combinations of the $\mT_{\bm\mu}$'s and $\wh{\mT}_{\bm\mu}$'s to define
\begin{equation*}
\mF_j := \mT_{\bm{\mu_j}} + \mT_{\bm{\mu_j}+\bm\l}
\andd
\wh{\mF}_j := \wh{\mT}_{\bm{\mu_j}} + \wh{\mT}_{\bm{\mu_j}+\bm\l}.
\end{equation*}
With the next lemma, we show that the shadow contributions to $\wh{\mF}_j$ do in fact vanish and hence the $\mF_j$'s constitute a Maass form.\footnote{Under any element of $\SL_2(\Z)$, the functions $\wh F_j$ transform into a linear combination of completed mock Maass theta functions $\wh\mT_{\bm\mu}$ according to Theorem \ref{thm:That_general_modular_transformations}. According to the Section 3 of \cite{Zw}, we have 
\[
	\left|q^{Q(\bm n)}\int_{t_1}^{t_2} e^{-\pi B(\bm n,\bm{c(t)})^2\t_2} dt \right| \le |t_2-t_1|e^{-C\t_2\left(n_1^2+n_2^2\right)}
\]
for any $\bm n$, with a constant $C$ depending on $\bm{c_1}$ and $\bm{c_2}$. So the infinite series in equation \eqref{eq:mock_maass_theta_fnc_completion} can be bounded from above with an ordinary theta function and this ensures that each $\wh F_j$ has at most polynomial growth at the cusps. This allows $F_j=\wh F_j$ to satisfy the growth condition required from Maass forms in Definition \ref{defn:Maass_form}. The same argument also applies to the relevant functions for $L_5,\dots,L_8$ and $L_9,\dots,L_{12}$ cases below.}

\begin{lem}\label{lem:modular_obstruction_cancellation}
For $j\in\{0,1,2,3\}$ we have
\begin{equation*}
\mF_j  = \wh{\mF}_j  .
\end{equation*}
\end{lem}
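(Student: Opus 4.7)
The plan is to show that the shadow contributions to $\wh{\mF}_j - \mF_j$ cancel, using two applications of Lemma \ref{lemma:sig}. By Proposition \ref{prop:quadzero},
\begin{equation*}
\wh{\mF}_j - \mF_j = \sum_{\bm\mu \in \{\bm{\mu_j},\, \bm{\mu_j}+\bm\l\}} \left( \varphi_{\bm\mu}^{[\bm{c_1}]} - \varphi_{\bm\mu}^{[\bm{c_2}]} \right) ,
\end{equation*}
so it suffices to show that this sum of shadow differences vanishes.

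First I would unify the $\bm{c_2}$-shadows with the $\bm{c_1}$-shadows by taking $\s := \pmat{-1 & 0 \\ 0 & 1}$, which satisfies $\s^T A \s = A$, $\det \s = -1$, and $\s \bm{c_1} = \bm{c_2}$. Lemma \ref{lemma:sig} then yields $\varphi_{\bm a}^{[\bm{c_2}]} = -\varphi_{\s \bm a}^{[\bm{c_1}]}$ (using $\s^2 = I$), converting the above expression into
\begin{equation*}
\wh{\mF}_j - \mF_j = \sum_{\bm a \in S_j} \varphi_{\bm a}^{[\bm{c_1}]}, \quad \mbox{where } S_j := \left\{\bm{\mu_j},\, \bm{\mu_j}+\bm\l,\, \s\bm{\mu_j},\, \s(\bm{\mu_j}+\bm\l)\right\} \subset \R^2/\Z^2.
\end{equation*}
A direct check shows that $|S_j| = 4$ for each $j$.

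Next I would exhibit an involution $\g_0 \in \GL_2(\Z)$ with $\g_0^T A \g_0 = A$, $\det \g_0 = -1$, and $\g_0 \bm{c_1} = \bm{c_1}$. Such a $\g_0$ exists thanks to the Pell-equation arithmetic attached to the orthogonal group of $Q$; concretely $\g_0 := \pmat{-3 & -2 \\ 4 & 3}$, essentially a sign-flipped fundamental unit of $\Z[\sqrt{2}]$, is a suitable choice. Lemma \ref{lemma:sig} with $\bm{c_3} = \bm{c_4} = \bm{c_1}$ then gives $\varphi_{\g_0 \bm a}^{[\bm{c_1}]} = -\varphi_{\bm a}^{[\bm{c_1}]}$.

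The remaining task is to verify that $\g_0$ induces a bijection on $S_j$ for each $j \in \{0,1,2,3\}$. Since $\g_0 \bm\l \equiv \bm\l \pmod{\Z^2}$, this reduces to a short case check that $\g_0 \bm{\mu_j}$ and $\g_0 \s \bm{\mu_j}$ lie in $S_j$ for each $j$. Granting this, reindexing the sum by $\bm a \mapsto \g_0 \bm a$ yields $\sum_{\bm a \in S_j}\varphi_{\bm a}^{[\bm{c_1}]} = -\sum_{\bm a \in S_j}\varphi_{\bm a}^{[\bm{c_1}]}$, forcing the sum to vanish and hence $\wh{\mF}_j = \mF_j$. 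The main obstacle is locating the matrix $\g_0$ with the required properties; the remainder of the argument is a direct computation.
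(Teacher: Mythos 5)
Your proposal is correct, and while it runs on the same engine as the paper's proof --- Lemma \ref{lemma:sig} applied with the automorphism $\pmat{-3&-2\\4&3}$ of $Q$, which is precisely the matrix $\g_1$ the paper uses --- it packages the cancellation differently. The paper locates a \emph{second} automorphism $\g_2=\pmat{3&-2\\4&-3}$ with $\g_2\bm{c_2}=-\bm{c_2}$ and kills the $\bm{c_1}$-shadows and the $\bm{c_2}$-shadows separately, each time as a two-term cancellation $\varphi_{\bm{\mu_j}}^{[\bm{c}]}+\varphi_{\bm{\mu_j}+\bm\l}^{[\bm{c}]}=0$. You instead first fold the $\bm{c_2}$-shadows into $\bm{c_1}$-shadows via the reflection $\s=\pmat{-1&0\\0&1}$ and then cancel the resulting four-term sum over $S_j$ with the single automorphism $\g_0=\g_1$. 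The two routes are algebraically equivalent (indeed $\g_2=-\s\g_1\s$), but yours needs only one nontrivial automorphism to be found, at the price of tracking a four-element orbit rather than two two-element ones. I confirmed the steps you deferred: $\s^TA\s=A$, $\det\s=-1$, and $\s\bm{c_1}=\bm{c_2}$, so Lemma \ref{lemma:sig} does give $\varphi_{\bm a}^{[\bm{c_2}]}=-\varphi_{\s\bm a}^{[\bm{c_1}]}$; the four classes in $S_j$ are distinct modulo $\Z^2$; and for every $j\in\{0,1,2,3\}$ one has $\g_0\bm{\mu_j}\equiv\bm{\mu_j}$ or $\bm{\mu_j}+\bm\l$ and $\g_0\s\bm{\mu_j}\equiv\s\bm{\mu_j}$ or $\s(\bm{\mu_j}+\bm\l)$ modulo $\Z^2$, which together with $\g_0\bm\l\equiv\s\bm\l\equiv\bm\l\Pmod{\Z^2}$ shows that $\g_0$ permutes $S_j$, so the reindexing argument closes.
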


\begin{proof}
Suppose that $\g\in\GL_2(\Z)$ is an automorphism such that $\g^TA\g=A$, $\det(\g)=-1$, and $\g\bm{c}= \pm \bm{c}$ for some $\bm{c} \in C_Q$. Suppose moreover that $\g\bm{\l}\equiv\bm{\l}\pmod{\Z^2}$ and that $\g$ transforms $\bm{\mu}$ to $\pm \bm{\mu}$ or $\pm (\bm{\mu} + \bm{\l})$ modulo $\Z^2$. Then the identity $\varphi_{-\bm a}^{[\bm c]}=\varphi_{\bm a}^{[\bm c]}$ and Lemma \ref{lemma:sig} yield
\begin{equation*}
\varphi_{\bm{\mu}}^{[\bm{c}]} + \varphi_{\bm{\mu}+\bm\l}^{[\bm{c}]} = 0 .
\end{equation*}

We use this fact with the automorphisms $\g_1:=\pmat{-3&-2\\4&3}$ and $\g_2:=\pmat{3&-2\\4&-3}$, which both satisfy $\det(\g_j)=-1$ and $\g^TA\g=A$, as well as $\g_1\bm{c_1}=\bm{c_1}$ and $\g_2\bm{c_2}=-\bm{c_2}$. Since
\begin{equation*}
\g_1\bm{\mu_j} = \mat{\frac{-6j-7}{16}&\frac{4j+5}{8}}^T,\qquad \g_2\bm{\mu_j} = \mat{\frac{6j-1}{16}&\frac{4j-1}{8}}^T,\qquad \g_1\bm\l \equiv \g_2\bm\l \equiv \bm\l \pmod{\Z^2},
\end{equation*}
we also find that $\g_1$ and $\g_2$ transform $\bm{\mu_j}$ to $\pm \bm{\mu_j}$ or $\pm (\bm{\mu_j}+\bm\l)$ modulo $\Z^2$ to conclude that
\begin{equation*}
\wh{\mF}_j - \mF_j = \lp \varphi_{\bm{\mu_j}}^{[\bm{c_1}]} + \varphi_{\bm{\mu_j}+\bm\l}^{[\bm{c_1}]} \rp
- \lp \varphi_{\bm{\mu_j}}^{[\bm{c_2}]} + \varphi_{\bm{\mu_j}+\bm\l}^{[\bm{c_2}]} \rp = 0
\quad \mbox{for all } j. \qedhere
\end{equation*}
\end{proof}

We next state the details of modular transformations for the $\mF_j$'s under the modular group $\GG_0(2)$, which is generated by $T:=\pmat{1&1\\0&1}$, $R:=\pmat{1&0\\2&1}$, and $-I$. Here and throughout we use $\z_n:=e^\frac{2\pi i}{n}$.

\begin{prop}\label{prop:L_1_4_modular_transformation}
The functions $\mF_j$ for $j\in\{0,1,2,3\}$ transform like a vector-valued modular function under $\GG_0(2)$:
\begin{equation*}
\mF_j\left(\frac{a\t+b}{c\t+d}\right) = \sum_{k=0}^3 \LL_M(j,k)\mF_k(\t) \qquad \mbox{for all } M \in \GG_0(2),
\end{equation*}
where the multiplier system $\LL$ is as follows for $T=\pmat{1&1\\0&1}$ and $R=\pmat{1&0\\2&1}$:
\begin{align*}
\LL_T = \mat{\z_{32}^{31}&0&0&0\\0&\z_{32}^7&0&0\\0&0&\z_{32}^{23}&0\\0&0&0&\z_{32}^{15}},\ \LL_R = \frac{1}{\sqrt{2}} \mat{\z^{31}_{32}\cos\left(\frac{3\pi}{16}\right) & \z^3_{32}\cos\left(\frac{\pi}{16}\right) & \z^{11}_{32}\sin\left(\frac{\pi}{16}\right) & \z^{23}_{32}\sin\left(\frac{3\pi}{16}\right) \\\vspace{-.3cm}\\ \z^3_{32}\cos\left(\frac{\pi}{16}\right) & \z^{23}_{32}\sin\left(\frac{3\pi}{16}\right) & \z^{31}_{32}\cos\left(\frac{3\pi}{16}\right) & \z^{11}_{32}\sin\left(\frac{\pi}{16}\right) \\\vspace{-.3cm}\\ \z^{11}_{32}\sin\left(\frac{\pi}{16}\right) & \z^{31}_{32}\cos\left(\frac{3\pi}{16}\right) & \z^{23}_{32}\sin\left(\frac{3\pi}{16}\right) & \z^3_{32}\cos\left(\frac{\pi}{16}\right) \\\vspace{-.3cm}\\ \z^{23}_{32}\sin\left(\frac{3\pi}{16}\right) & \z^{11}_{32}\sin\left(\frac{\pi}{16}\right) & \z^3_{32}\cos\left(\frac{\pi}{16}\right) & \z^{31}_{32}\cos\left(\frac{3\pi}{16}\right)}.
\end{align*}
\end{prop}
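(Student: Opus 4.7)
The plan is to reduce the claim to a computation in the Weil representation. By Lemma \ref{lem:modular_obstruction_cancellation} we have $\mF_j = \wh{\mF}_j$, so it suffices to prove the asserted transformations for the completed combinations $\wh{\mF}_j = \wh{\mT}_{\bm{\mu_j}} + \wh{\mT}_{\bm{\mu_j}+\bm\l}$. Because each $\wh{\mT}_{\bm\mu}$ transforms under all of $\SL_2(\Z)$ via the Weil representation attached to $Q$ (Theorem \ref{thm:That_general_modular_transformations}), so does every linear combination of them, and it therefore suffices to verify the formula on the generators $T$, $R$, and $-I$ of $\GG_0(2)$. The transformation under $-I$ is immediate from the symmetry $\wh{\mT}_{-\bm\mu} = \wh{\mT}_{\bm\mu}$ in \eqref{eq:F_L1_4_identifications}.

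For $T$, applying Theorem \ref{thm:F_hat_modular_transformations} and computing
\begin{equation*}
Q(\bm{\mu_j}) = \frac{(2j+1)^2 - 2}{32}, \qquad Q(\bm{\mu_j} + \bm\l) - Q(\bm{\mu_j}) = Q(\bm\l) + B(\bm{\mu_j}, \bm\l) = 1 + j \in \Z
\end{equation*}
shows that both summands of $\wh{\mF}_j$ pick up the identical phase $e^{2\pi i Q(\bm{\mu_j})} = \z_{32}^{(2j+1)^2 - 2}$. Reducing modulo $32$ for $j = 0,1,2,3$ produces the diagonal entries $\z_{32}^{31}, \z_{32}^{7}, \z_{32}^{23}, \z_{32}^{15}$ of $\LL_T$.

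For $R$ the plan is to apply Theorem \ref{thm:That_general_modular_transformations} directly with $M = R$. Since $A = \mathrm{diag}(16,-8)$ is diagonal, the inner sum over $\bm m \in \Z^2/2\Z^2$ defining $\psi_R(\bm\mu,\bm\nu)$ factorizes across coordinates; evaluating the resulting $\Z/2\Z$-sums shows that $\psi_R(\bm{\mu_j},\bm\nu)$ vanishes unless $\bm\nu = (\frac{a}{16}, \frac{b}{8})$ with both $a$ and $b$ odd, in which case completing the square yields
\begin{equation*}
\psi_R(\bm{\mu_j},\bm\nu) = \tfrac{1}{4\sqrt{2}}\,\z_{64}^{((2j+1)-a)^2 - 2(b-1)^2},
\end{equation*}
while an analogous computation for $\bm{\mu_j}+\bm\l$ produces the same expression multiplied by $(-1)^{j}i^{b-a}$.

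The final step is to collect the surviving $32$ cosets into the four $\wh{\mF}_k$-orbits via the identities \eqref{eq:F_L1_4_identifications}: the $k$-th orbit consists of the four representatives of $\bm{\mu_k}$ together with the four of $\bm{\mu_k}+\bm\l$, and these $4 \times 8 = 32$ cosets exactly exhaust the surviving terms. Summing the eight roots of unity within each orbit allows conjugate pairs to combine into $2\cos(\cdot)$ and $2\sin(\cdot)$ terms, which match the trigonometric factors of $\LL_R$. The hard part is this final bookkeeping: while the Gauss-sum evaluation is a routine completion of the square, verifying that the output reorganizes precisely into the displayed combinations $\tfrac{1}{\sqrt 2}\z_{32}^{\star}\cos(\tfrac{\pi}{16})$, $\cos(\tfrac{3\pi}{16})$, $\sin(\tfrac{\pi}{16})$, $\sin(\tfrac{3\pi}{16})$ of $\LL_R$ requires careful tracking of the residues $(\pm(2j+1) \pm a) \bmod 32$ and $(b-1) \bmod 8$, together with the observation that the explicit twist $(-1)^{j}i^{b-a}$ forces the coefficients of $\wh{\mT}_{\bm{\mu_k}}$ and $\wh{\mT}_{\bm{\mu_k}+\bm\l}$ in $\wh{\mF}_j(R\t)$ to agree, so that the sum genuinely reassembles into $\wh{\mF}_k$.
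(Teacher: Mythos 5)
Your overall strategy is the same as the paper's: pass to the completed functions $\wh{\mF}_j$ via Lemma \ref{lem:modular_obstruction_cancellation}, check the generators $T$, $R$, $-I$, read off $\LL_T$ from $Q(\bm{\mu_j})=\tfrac{(2j+1)^2-2}{32}$ together with $Q(\bm{\mu_j}+\bm\l)-Q(\bm{\mu_j})=j+1\in\Z$, and for $R$ evaluate the Gauss sum of Theorem \ref{thm:That_general_modular_transformations}, observe that only the cosets $\bm\nu=(\tfrac{a}{16}\ \ \tfrac{b}{8})^T$ with $a,b$ both odd survive, and regroup the $32$ survivors into the four orbits underlying $\wh{\mF}_0,\dots,\wh{\mF}_3$. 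The $T$-computation and the formula $\psi_R(\bm{\mu_j},\bm\nu)=\tfrac{1}{4\sqrt2}\z_{64}^{(2j+1-a)^2-2(b-1)^2}$ are correct.

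However, your twist factor for the second summand is wrong by a sign, and the error is consequential. On the surviving cosets one has $\psi_R(\bm\mu,\bm\nu)=\tfrac{1}{4\sqrt2}e^{\pi iQ(\bm\mu-\bm\nu)}$, and the exponent for $\bm{\mu_j}+\bm\l$ exceeds that for $\bm{\mu_j}$ by
\begin{equation*}
(2j+9-a)^2-2(b-5)^2-\left((2j+1-a)^2-2(b-1)^2\right)=32j+32+16(b-a),
\end{equation*}
so the ratio is $\z_{64}^{32j+32+16(b-a)}=-(-1)^{j}i^{b-a}$, not $(-1)^{j}i^{b-a}$; as a sanity check, for $j=0$ and $\bm\nu=\bm{\mu_0}$ (i.e.\ $a=b=1$) the ratio is $e^{\pi iQ(\bm\l)}=-1$. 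Carrying your sign through the $(0,0)$ entry would give $\tfrac{1}{\sqrt2}\z_{32}^{31}\left(\cos\tfrac{\pi}{16}\cos\tfrac{\pi}{8}+\sin\tfrac{\pi}{16}\sin\tfrac{\pi}{8}\right)=\tfrac{1}{\sqrt2}\z_{32}^{31}\cos\tfrac{\pi}{16}$ rather than the correct $\tfrac{1}{\sqrt2}\z_{32}^{31}\cos\tfrac{3\pi}{16}$, so the resulting matrix would not be $\LL_R$. Beyond this, the step you yourself flag as the hard part --- summing the eight roots of unity in each orbit and checking that the coefficients of $\wh{\mT}_{\bm{\mu_k}}$ and $\wh{\mT}_{\bm{\mu_k}+\bm\l}$ genuinely agree --- is asserted rather than carried out. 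The paper closes this by packaging each orbit sum as $\rho(\bm\mu,\bm\nu)=\tfrac{1}{\sqrt2}e^{\pi i(Q(\bm\mu)+Q(\bm\nu))}\cos(16\pi\mu_1\nu_1)\cos(8\pi\mu_2\nu_2)$ and proving $\l(\bm\mu,\bm\nu+\bm\l)=\l(\bm\mu,\bm\nu)$ from the identity $e^{\pi iQ(\bm\mu+\bm\l)}=ie^{\pi iQ(\bm\mu)}e^{8\pi i\mu_1}$; you should either adopt that closed form or redo the residue bookkeeping with the corrected twist.
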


\begin{proof}
The behavior under translation follows from Theorem \ref{thm:That_general_modular_transformations} and the fact that $Q(\bm{\mu_j}+\bm\l) = Q(\bm{\mu_j})+j+1$.
We next consider the transformation $\t\mapsto\frac{\t}{2\t+1}$. By Theorem \ref{thm:That_general_modular_transformations}, we have
\begin{equation*}
\wh{\mT}_{\bm\mu}\left(\frac{\t}{2\t+1}\right) = \sum_{\bm\nu\in A^{-1}\Z^2/\Z^2} \psi(\bm\mu,\bm\nu)\wh{\mT}_{\bm\nu}(\t),
\end{equation*}
where
\begin{equation*}
\psi(\bm\mu,\bm\nu) = \frac{1}{2\sqrt{|\det(A)|}} \sum_{\bm m\in\Z^2/2\Z^2} e^{\pi i(Q(\bm m+\bm\mu)-B(\bm m+\bm\mu,\bm\nu)+Q(\bm\nu))}.
\end{equation*}
Since $Q(\bm m+\bm\mu)-B(\bm m+\bm\mu,\bm\nu)+Q(\bm\nu)=Q(\bm m)+B(\bm m,\bm\mu-\bm\nu)+Q(\bm\mu-\bm\nu)$ and $Q(\bm m)=8m_1^2-4m_2^2$ is even we get
\begin{equation*}
\psi(\bm\mu,\bm\nu) =  \frac{e^{\pi iQ(\bm\mu-\bm\nu)}}{16\sqrt{2}} \sum_{\bm m\in\Z^2/2\Z^2} e^{\pi iB(\bm m,\bm\mu-\bm\nu)}.
\end{equation*}
The elements of $A^{-1}\Z^2/\Z^2$ are of the form $(\frac{r_1}{16}\ \ \frac{r_2}{8})^T$ with $r_1\in\{0,1,2\dots,15\}$ and $r_2\in\{0,1,2\dots,7\}$. Letting $\bm\mu=(\frac{2r_1+1}{16}\ \ \frac{2r_2 +1}{8})^T$ and $\bm\nu=(\frac{2s_1+\ell_1}{16}\ \ \frac{2s_2+\ell_2}{8})^T$, where $r_1,s_1\in\{0,1,\dots,7\}$, $r_2,s_2\in\{0,1,2,3\}$, and $\ell_1,\ell_2\in\{0,1\}$, we find
\begin{equation*}
\sum_{\bm m\in\Z^2/2\Z^2} e^{\pi iB(\bm m,\bm\mu-\bm\nu)} = 4\d_{\ell_1,1}\d_{\ell_2,1}.
\end{equation*}
In particular, this shows that the $\wh{\mT}_{\bm\mu}$'s with $\bm\mu$ of the form $(\frac{2r_1+1}{16}\ \ \frac{2r_2+1}{8})^T$ transform among each other. Moreover, because of equation \eqref{eq:F_L1_4_identifications}, changing $\mu_1$ from $\frac{1}{16}, \frac{3}{16}, \frac{5}{16}, \frac{7}{16}$ to $\frac{15}{16}, \frac{13}{16}, \frac{11}{16}, \frac{9}{16}$, respectively, (or vice versa), or changing $\mu_2$ from $\frac{1}{8}, \frac{5}{8}$ to $\frac{7}{8},\frac{3}{8}$, respectively, does not change $\wh{\mT}_{\bm{\mu}}$. So we can combine the contributions of cosets corresponding to $\bm{\mu}, -\bm{\mu}, (-\mu_1\ \ \mu_2)^T, (\mu_1\ -\mu_2)^T$ by restricting $\bm{\mu}$ to
\begin{equation*}
\calS := \{\bm{\mu_0},\bm{\mu_1},\bm{\mu_2},\bm{\mu_3},\bm{\mu_0}+\bm\l,\bm{\mu_1}+\bm\l,\bm{\mu_2}+\bm\l,\bm{\mu_3}+\bm\l\},
\end{equation*}
and by combining the corresponding factors in the transformation as
\begin{equation*}
\rho (\bm{\mu}, \bm{\nu}) := 
\psi (\bm{\mu}, \bm{\nu}) + \psi (\bm{\mu}, -\bm{\nu})
+ \psi (\bm{\mu}, (\nu_1, -\nu_2)) + \psi (\bm{\mu}, (-\nu_1, \nu_2)) 
\andd
\psi (\bm{\mu}, \bm{\nu}) =  \frac{e^{\pi i Q(\bm{\mu}-\bm{\nu}) }}{4 \sqrt{2}}.
\end{equation*}
This yields
\begin{equation*}
\wh{\mT}_{\bm{\mu}} \lp \frac{\t}{2 \t+1} \rp
=
\sum_{\bm{\nu} \in \mathcal{S}} 
\rho (\bm{\mu}, \bm{\nu}) \wh{\mT}_{\bm{\nu}} (\tau) 
\quad \mbox{for } \bm{\mu} \in  \mathcal{S},
\end{equation*}
where we simplify $\rho$ as
\begin{equation*}
\rho(\bm\mu,\bm\nu) = \frac{e^{\pi i(Q(\bm\mu)+Q(\bm\nu))}}{\sqrt{2}} \cos(16\pi\mu_1\nu_1) \cos(8\pi\mu_2\nu_2).
\end{equation*}

Since our ultimate goal is to study the transformation of $\wh{\mF}_j$, we write
\begin{equation*}
\wh{\mT}_{\bm{\mu}} \lp \frac{\t}{2 \t+1} \rp + \wh{\mT}_{\bm{\mu} + \bm{\l}} \lp \frac{\t}{2 \t+1} \rp
=
\sum_{\bm{\nu} \in \mathcal{S}} 
\l (\bm{\mu}, \bm{\nu}) \wh{\mT}_{\bm{\nu}} (\tau),
\end{equation*}
where
\begin{equation*}
\l (\bm{\mu}, \bm{\nu})  := \rho (\bm{\mu}, \bm{\nu})  + \rho (\bm{\mu}+\bm{\l}, \bm{\nu}) .
\end{equation*}
Since $Q(\bm\l)=1$, $B(\bm\mu,\bm\l)=8\mu_1-4\mu_2$, $\mu_2\in\{\frac18,\frac58\}$ for $\bm{\mu} \in \mathcal{S}$, we find the identity 
\begin{equation*}
e^{\pi iQ(\bm\mu+\bm\l)} = ie^{\pi iQ(\bm\mu)}e^{8\pi i\mu_1}
\quad \mbox{for } \bm{\mu} \in \mathcal{S},
\end{equation*}
which we can use to show
\begin{equation*}
\l(\bm\mu,\bm\nu+\bm\l) = \l(\bm\mu,\bm\nu).
\end{equation*}
So we finally get
\begin{equation*}
\wh{\mT}_{\bm{\mu_j}}\left(\frac{\t}{2\t+1}\right) + \wh{\mT}_{\bm{\mu_j}+\bm\l}\left(\frac{\t}{2\t+1}\right) = \sum_{k=0}^3 \l(\bm{\mu_j},\bm{\mu_k}) \left(\wh{\mT}_{\bm{\mu_k}}(\t)+\wh{\mT}_{\bm{\mu_k}+\bm\l}(\t)\right)
\end{equation*}
and hence
\begin{equation*}
\mF_j\left(\frac{\t}{2\t+1}\right) = \sum_{k=0}^3 \l(\bm{\mu_j},\bm{\mu_k}) \mF_k(\t).
\end{equation*}
Computing the explicit values of $\l(\bm{\mu_j},\bm{\mu_k})$ gives the stated transformation.
\end{proof}

\subsection{Quantum modularity}

To summarize our findings so far, we know from Proposition \ref{prop:L_1_4_modular_transformation} that the functions $\mF_j$ for $j\in\{0,1,2,3\}$ form a vector-valued Maass form for $\GG_0(2)$. Our next goal is to apply the results of Section \ref{sec:maass_forms} on these functions. Firstly, we note that thanks to equation \eqref{eq:defn_u_and_error} and Proposition \ref{prop:uj_fourier_expansion}, the functions $\mf_j$ (which are basically equal to the functions $L_1,\dots,L_4$ thanks to Lemma \ref{lem:l1l4rew}) are related to the $\mF_j$'s as
\begin{equation*}
\mf_j(\t) = -\frac2\pi\int_\t^{i\infty} [\mF_j(z),R_\t(z)] .
\end{equation*}
Then thanks to  Proposition \ref{prop:uj_modularity}, they satisfy the following modular transformation for any $M:=\pmat{a&b\\c&d}\in\GG_0(2)$ and $\t\in\H$ with $\t_1\ne-\frac dc$:
\begin{equation*}
\mf_j\left(\frac{a\t+b}{c\t+d}\right) = \sgn(c\t_1+d)(c\t+d) \sum_{k=0}^3 \LL_M(j,k)\left(\mf_k(\t)+\mcalF_{k,-\frac dc}(\t)\right).
\end{equation*}
Here the obstruction to modularity $\mcalF_{k,-\frac dc}$ is a holomorphic function on $\C\setminus(-\frac dc+i\R)$ (thanks to Proposition \ref{prop:modularity_error_holomorphy}) defined as in equation \eqref{eq:defn_u_and_error}:
\begin{equation*}
\mcalF_{j,\varrho}(\t) := \frac2\pi\int_\varrho^{i\infty} [\mF_j(z),R_\t(z)] .
\end{equation*}

Next, following Proposition \ref{prop:uj_vertical_limit} we define the functions $\mfrakf_j : \calQ_{\GG_0 (2)} \to \C$ with $j\in\{0,1,2,3\}$ by 
\begin{equation}\label{eq:L1_4_quantum_limit_defn}
\mfrakf_j(x) := \lim_{t\to0^+} \mf_j(x+it) .
\end{equation}
Then finally by Theorem \ref{thm:frak_uj_quantum_modular_transformations} we find the quantum modular properties for the rational limits of $L_1, \ldots, L_4$.
\begin{prop}\label{prop:L1_4_quantum_modular}
The functions $\mfrakf_j : \calQ_{\GG_0 (2)} \to \C$ with $j\in\{0,1,2,3\}$ form a vector-valued quantum modular form transforming as follows for any $M := \pmat{a&b\\c&d}\in \GG_0 (2)$ and $x \in \mathcal{Q}_{\GG_0 (2)} \setminus \{-\frac{d}{c} \}$:
\begin{equation*}
\mfrakf_j\lp \frac{ax+b}{cx+d}\rp 
= |cx+d| \sum_{k=0}^3 \LL_M(j,k) 
\lp \mfrakf_k(x)+\mcalF_{k,-\frac{d}{c}}(x)\rp .
\end{equation*}
Here the multiplier system $\LL$ is as given in Proposition \ref{prop:L_1_4_modular_transformation}.
\end{prop}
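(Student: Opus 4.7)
The plan is to package Proposition \ref{prop:L1_4_quantum_modular} as a direct application of the machinery built up in Section \ref{sec:maass_forms}, since all the structural work (cancellation of shadows, explicit modular transformations of the $\mF_j$'s) has already been done. First I would verify that the family $\{\mF_j\}_{j=0}^3$ satisfies the hypotheses required in Definition \ref{defn:Maass_form} and in the surrounding discussion. The Laplace eigenvalue $\frac14$ property follows from the construction of the mock Maass theta functions in equation \eqref{eq:defn_mock_maass_theta_fnc} together with Lemma \ref{lem:modular_obstruction_cancellation}, which identifies $\mF_j$ with $\wh{\mF}_j$ (so no holomorphic shadow contribution breaks the Laplacian eigenvalue property). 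Modularity with multiplier $\LL$ on $\GG_0(2)$ is Proposition \ref{prop:L_1_4_modular_transformation}, and the polynomial growth at the cusps follows from the theta-series upper bound sketched in the footnote after the statement of Lemma \ref{lem:modular_obstruction_cancellation}.

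Next I would observe that each $\mF_j$ has \emph{vanishing constant Fourier coefficient}, i.e. $c_j=0$ in the notation of \eqref{eq:U_fourier_expansion}. This is immediate because $\bm{\mu_j}\notin\Z^2$ and $\bm{\mu_j}+\bm\l\notin\Z^2$, so neither $\mT_{\bm{\mu_j}}$ nor $\mT_{\bm{\mu_j}+\bm\l}$ carries a $\sqrt{\t_2}$-term in \eqref{eq:defn_mock_maass_theta_fnc}. Consequently the numbers $\g_{j,x}$ defined in Proposition \ref{prop:uj_vertical_limit} all vanish, and the limit
\begin{equation*}
\mfrakf_j(x)=\lim_{t\to 0^+}\mf_j(x+it)
\end{equation*}
in \eqref{eq:L1_4_quantum_limit_defn} exists for every $x\in\calQ_{\GG_0(2)}$ by Proposition \ref{prop:uj_vertical_limit}, with no subtraction needed. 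Lemma \ref{lem:l1l4rew} identifies these $\mfrakf_j$'s (up to the rational powers of $q$) with the finite vertical limits of $L_1,\dots,L_4$ that appear in the statement of Theorem \ref{thm:main_result}.

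Finally I would plug $U_j=\mF_j$, $u_j=\mf_j$, $\mfrak{u}_j=\mfrakf_j$, and $\Psi_M=\LL_M$ into Theorem \ref{thm:frak_uj_quantum_modular_transformations}. Because $c_j=0$ forces $\g_{j,x}=0$, the transformation formula collapses to exactly the claimed
\begin{equation*}
\mfrakf_j\!\left(\tfrac{ax+b}{cx+d}\right)=|cx+d|\sum_{k=0}^{3}\LL_M(j,k)\bigl(\mfrakf_k(x)+\mcalF_{k,-d/c}(x)\bigr),
\end{equation*}
valid for every $M=\pmat{a&b\\c&d}\in\GG_0(2)$ and every $x\in\calQ_{\GG_0(2)}\setminus\{-d/c\}$, with $\mcalF_{k,\varrho}$ holomorphic on $\C\setminus(\varrho+i\R)$ by Proposition \ref{prop:modularity_error_holomorphy}. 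There is no genuine obstacle here: all the analytic content (interchange of limits, path-of-approach independence, modular transformation of the period integrals) is already encapsulated in Propositions \ref{prop:uj_modularity}, \ref{prop:uj_vertical_limit}, Lemma \ref{lem:frak_uj_limit_path_deformation}, and Theorem \ref{thm:frak_uj_quantum_modular_transformations}; the main work was done upstream, in establishing the shadow cancellation (Lemma \ref{lem:modular_obstruction_cancellation}) and the explicit form of $\LL$ (Proposition \ref{prop:L_1_4_modular_transformation}).
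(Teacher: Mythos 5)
Your proposal is correct and follows exactly the route the paper takes: the paper proves Proposition \ref{prop:L1_4_quantum_modular} by observing that the $\mF_j$'s form a vector-valued Maass form for $\GG_0(2)$ with vanishing constant terms (so all $\g_{j,x}=0$) and then invoking Theorem \ref{thm:frak_uj_quantum_modular_transformations}. Your verification that $\bm{\mu_j},\bm{\mu_j}+\bm\l\notin\Z^2$ is precisely the reason no subtraction is needed in \eqref{eq:L1_4_quantum_limit_defn}, matching the paper's treatment.
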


\section{The functions $L_5$\textendash$L_8$}\label{sec:L5_L8_analysis}
We continue in this section with the functions $L_5,\dots,L_8$ that count ideals in the ring of integers of $\Q(\sqrt{3})$. This is possibly the most interesting family of functions among our examples since its discussion requires the novel features studied in Section \ref{sec:maass_forms} due to the presence of constant terms.

\subsection{Rewriting as theta functions}

We begin as in Section \ref{sec:L1L4_rearrangement} by rewriting our functions in terms of theta functions.

\begin{lem}\label{lem:l5l8}
We have
\begin{align*}
q^{-1}L_5(q) &= \frac12\left(\sum_{\bm n\in\Z^2\setminus\{\bm 0\}} + \sum_{\bm n\in\Z^2+\left(\frac12\ \frac12\right)^T}\right) (1+\sgn(n_1+n_2)\sgn(n_1-n_2)) q^{6n_1^2-2n_2^2},\\
q^{-\frac12}L_6(q) &= \frac12\left(\sum_{\bm n\in\Z^2+\left(0\ \frac12\right)^T} + \sum_{\bm n\in\Z^2+\left(\frac12\ 0\right)^T}\right) (1+\sgn(n_1+n_2)\sgn(n_1-n_2)) q^{6n_1^2-2n_2^2},\\
q^\frac16L_7(q) &= \frac12\left(\sum_{\bm n\in\Z^2+\left(\frac13\ \frac12\right)^T} + \sum_{\bm n\in\Z^2+\left(\frac16\ 0\right)^T}\right) (1+\sgn(n_1+n_2)\sgn(n_1-n_2)) q^{6n_1^2-2n_2^2},\\
q^{-\frac13}L_8(q) &= \frac12\left(\sum_{\bm n\in\Z^2+\left(\frac16\ \frac12\right)^T} + \sum_{\bm n\in\Z^2+\left(\frac13\ 0\right)^T}\right) (1+\sgn(n_1+n_2)\sgn(n_1-n_2)) q^{6n_1^2-2n_2^2}.
\end{align*}
\end{lem}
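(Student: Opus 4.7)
The plan is to mirror the strategy already used in the proof of Lemma \ref{lem:l1l4}. The starting point will be a collection of indefinite double-sum representations of $L_5,\ldots,L_8$ that are (or can be read off from) results of Lovejoy--Osburn \cite{LoOs}, expressing each $q^{-c_j} L_j(q^{K_j})$ as a finite combination of series of the shape
\begin{equation*}
\sum_{\substack{n \ge n_0 \\ -n - \eps_1 \le j \le n + \eps_2}} q^{(An+a)^2 - 3(Bj+b)^2}
\end{equation*}
for suitable constants. These are the $\Q(\sqrt{3})$ analogues of the $\Q(\sqrt{2})$ identities invoked in Lemma \ref{lem:l1l4}, and arise from the same kind of Bailey-chain manipulations that were used there; they also underlie the ideal-counting interpretation of $L_5,\ldots,L_8$ in $\mathcal{O}_{\Q(\sqrt{3})}$ proved in \cite{LoOs}. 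I would either quote these identities verbatim or re-derive them along the lines of Theorem 1.1 of \cite{Lo}.

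Next I would carry out the same index-cleanup as in Lemma \ref{lem:l1l4}. First, drop the constraint $n \ge n_0$: the triangular constraint on $j$ automatically restricts the range, so after substitutions $n \mapsto -n$ or $n \mapsto -n-1$ on suitable pieces one can fold the ``negative'' contributions into the ``positive'' ones, leaving a sum over all $(n,j) \in \Z^2$ subject only to a triangular inequality. Then I would rescale the variable (by a factor $2$ for $L_5,L_6$ and a factor $6$ for $L_7,L_8$, matching the denominators $2$ and $6$ appearing in the claimed shifts) so that the integer quadratic exponent $(An+a)^2 - 3(Bj+b)^2$ becomes $6n_1^2 - 2n_2^2$ with $n_1 \in \Z + \mu_1$, $n_2 \in \Z + \mu_2$ for the cosets stated in the lemma.

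To pass from the triangular range to the full lattice sum one uses the key identity
\begin{equation*}
\tfrac{1}{2}\bigl(1 + \sgn(n_1+n_2)\sgn(n_1-n_2)\bigr) = \mathbf{1}_{|n_1| > |n_2|} + \tfrac{1}{2}\mathbf{1}_{|n_1|=|n_2|},
\end{equation*}
so that, together with the $\Z/2 \times \Z/2$ symmetries $(n_1,n_2) \mapsto (\pm n_1, \pm n_2)$ of $6n_1^2 - 2n_2^2$, the triangular sums convert into the full lattice sums weighted by $1+\sgn(n_1+n_2)\sgn(n_1-n_2)$ (with the $\tfrac12$ correcting the doubling). Matching the two cosets that appear for each $L_j$ to the shifts listed in the lemma is then a bookkeeping check.

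The main obstacle is the special treatment of $L_5$: unlike $L_1,\ldots,L_4$ (and unlike $L_6,L_7,L_8$), one of the cosets for $L_5$ is $\bm\mu = \bm 0$, and the point $\bm n = \bm 0$ lies on the degenerate locus $n_1 = \pm n_2$ where the $\sgn$-trick only weights by $\tfrac12$. A direct inspection of the defining $q$-series $L_5(q) = q\sum_{1 \le k \le n}(\cdots)$ shows that its lowest-order term is $2q^2$, so $q^{-1} L_5(q)$ has no constant term; hence the would-be contribution from $\bm n = \bm 0$ (which would yield $q^0 = 1$) must be absent, explaining the exclusion $\Z^2 \setminus \{\bm 0\}$ in the statement. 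I would verify that this exclusion emerges naturally from the constraint $k \ge 1$ in the original summation and from the $n \ge 1$ condition in the identity inherited from \cite{LoOs}, rather than being imposed by hand. The analogous constant-free point for $\bm\mu = (\tfrac12,\tfrac12)^T$ does not cause any issue because it is never an actual lattice point.
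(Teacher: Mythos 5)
Your proposal is correct and follows essentially the same route as the paper: the paper's proof simply declares the argument analogous to that of Lemma \ref{lem:l1l4} and quotes the needed double-sum identities, which it takes from Theorem 1.2 of \cite{Lo} (rather than \cite{LoOs}), e.g.\ $q^{-2}L_5(q^2)=2\sum_{n\ge1,\,-n\le j\le n-1}q^{3(2n)^2-(2j)^2}+2\sum_{n\ge0,\,-n\le j\le n}q^{3(2n+1)^2-(2j+1)^2}$, and then performs exactly the index-folding and rescaling you describe. Your observation that the exclusion of $\bm n=\bm 0$ for $L_5$ comes from the $n\ge1$ constraint (and is forced by the absence of a constant term in $q^{-1}L_5(q)$) is the right way to handle the only genuinely new feature of this case.
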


\begin{proof}
As the proof is analogous to that of Lemma \ref{lem:l1l4} we only state the identities used from Theorem 1.2 of \cite{Lo}. These are
\begin{align*}
q^{-2}L_5\left(q^2\right) &= 2\sum_{\substack{n\ge1\\-n\le j\le n-1}} q^{3(2n)^2-(2j)^2} + 2\sum_{\substack{n\ge0\\-n\le j\le n}} q^{3(2n+1)^2-(2j+1)^2},\\
q^{-1}L_6\left(q^2\right) &= 2\sum_{\substack{n\ge1\\-n\le j\le n-1}} q^{3(2n)^2-(2j+1)^2} + 2\sum_{\substack{n\ge0\\-n\le j\le n}} q^{3(2n+1)^2-(2j)^2},\\
qL_7\left(q^6\right) &= \sum_{\substack{n\ge0\\-n-1\le j\le n}} \left(q^{(6n+8)^2-3(2j+1)^2}+q^{(6n+4)^2-3(2j+1)^2}\right)\\
&\hspace{4.5cm}+ \sum_{\substack{n\ge0\\-n\le j\le n}} \left(q^{(6n+1)^2-3(2j)^2}+q^{(6n+5)^2-3(2j)^2}\right),\\
q^{-2}L_8\left(q^6\right) &=\sum_{\substack{n\ge0\\-n\le j\le n-1}} q^{(6n+1)^2-3(2j+1)^2} + \sum_{\substack{n\ge0\\-n-1\le j\le n}} q^{(6n+5)^2-3(2j+1)^2}\\
&\hspace{4.5cm}+ \sum_{\substack{n\ge-1\\-n-1\le j\le n+1}} q^{(6n+8)^2-3(2j)^2} + \sum_{\substack{n\ge0\\-n\le j\le n}} q^{(6n+4)^2-3(2j)^2}. \qedhere
\end{align*}
\end{proof}

To rewrite these expressions more compactly, we define the quadratic form
\begin{equation}\label{eq:quadratic_form_example2}
Q(\bm n):=6n_1^2-2n_2^2
\end{equation}
and the vectors 
\begin{equation}\label{eq:cj_vector_example2}
\bm{c_1}:=\frac{1}{2\sqrt{3}}(-1\ \ 3)^T
\andd
\bm{c_2}:=\frac{1}{2\sqrt{3}}(1\ \ 3)^T
\end{equation}
in $C_Q$ (as defined in Remark \ref{rem:P_choice}). The parameters $t_1$ and $t_2$ that describe these vectors according to \eqref{eq:ct_definition} are 
\begin{equation*}
t_1 = -\arctanh\left(\frac{1}{\sqrt{3}}\right) \andd t_2 = \arctanh\left(\frac{1}{\sqrt{3}}\right).
\end{equation*}
According to these values, the theta function given in equation \eqref{eq:defn_mock_maass_q_series} becomes
\begin{align*}\label{eq:vartheta_example2}
\mt_{\bm\mu}(\t) 
&= - \frac{2}{\pi} \arctanh \lp \frac{1}{\sqrt{3}} \rp  \d_{\bm\mu\in\Z^2}  +
\frac12\sum_{\substack{\bm n\in \Z^2+\bm\mu \\ \bm{n} \neq \bm{0} }}    (1+\sgn(n_1+n_2)\sgn(n_1-n_2)) q^{6n_1^2-2n_2^2}.
\end{align*}
We can now rewrite the expressions in Lemma \ref{lem:l5l8} in terms of these $\mt_{\bm{\mu}}$'s.
\begin{lem}\label{lem:l5l8rew}
We have
\begin{equation*}
q^{-1}L_5(q) - \frac{2}{\pi} \arctanh \lp \frac{1}{\sqrt{3}} \rp = \mg_0 (\t),  \ \ 
q^{-\frac12}L_6(q)  = \mg_3 (\t), \ \ 
q^\frac16L_7(q)  = \mg_1 (\t),\ \ 
q^{-\frac13}L_8(q)  = \mg_2 (\t),
\end{equation*}
where
\begin{equation*}
\mg_j (\t) := \mt_{\bm{\mu_j}}(\t) + \mt_{\bm{\mu_j}+\bm\l}(\t) 
\quad \mbox{with }
\bm{\mu_j}:=\lp \frac j6\ \ 0 \rp^T \mbox{ and } 
\bm\l:=\lp \frac12\ \ \frac12 \rp^T.
\end{equation*}
\end{lem}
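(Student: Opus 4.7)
The plan is to identify, for each $j \in \{0,1,2,3\}$, the two cosets appearing in the right-hand side of Lemma \ref{lem:l5l8} with the cosets $\bm{\mu_j} + \Z^2$ and $\bm{\mu_j} + \bm\l + \Z^2$ up to the elementary symmetries of $\mt_{\bm\mu}$. First I would record that the summand $(1+\sgn(n_1+n_2)\sgn(n_1-n_2))q^{6n_1^2-2n_2^2}$ is invariant under $\bm n \mapsto -\bm n$ (the two sign factors each flip) and under translation by $\Z^2$ of the coset parameter, so from the definition of $\mt_{\bm\mu}$ in \eqref{eq:defn_mock_maass_q_series} we have $\mt_{\bm\mu} = \mt_{-\bm\mu}$ and $\mt_{\bm\mu} = \mt_{\bm\mu+\bm\lambda}$ for every $\bm\lambda \in \Z^2$.

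Next I would treat the four values of $j$ in turn. For $j=0$, $\bm{\mu_0} = \bm 0 \in \Z^2$ produces the Kronecker-delta term $-\frac{2}{\pi}\arctanh(1/\sqrt{3})$ together with the sum over $\Z^2 \setminus \{\bm 0\}$, while $\bm{\mu_0}+\bm\l = (\tfrac12\ \tfrac12)^T$ supplies the second coset in the first line of Lemma \ref{lem:l5l8}, yielding $\mg_0 = q^{-1}L_5(q) - \frac{2}{\pi}\arctanh(1/\sqrt{3})$. For $j=3$, $\bm{\mu_3} = (\tfrac12\ 0)^T$ and $\bm{\mu_3} + \bm\l = (1\ \tfrac12)^T \equiv (0\ \tfrac12)^T \pmod{\Z^2}$; using the translation invariance of $\mt_{\bm\mu}$ this matches the two cosets of $L_6$. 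For $j = 1$, one has $\bm{\mu_1} = (\tfrac16\ 0)^T$ matching the second coset of $L_7$, while $\bm{\mu_1} + \bm\l = (\tfrac23\ \tfrac12)^T$ which under $\bm\mu \mapsto -\bm\mu$ and a $\Z^2$-translation becomes $(\tfrac13\ \tfrac12)^T$, the first coset of $L_7$. The case $j=2$ is identical: $\bm{\mu_2} = (\tfrac13\ 0)^T$ directly matches, and $\bm{\mu_2}+\bm\l = (\tfrac56\ \tfrac12)^T \equiv -(\tfrac16\ \tfrac12)^T \pmod{\Z^2}$ reproduces the remaining coset of $L_8$.

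Each of these four verifications is a one-line bookkeeping check once the symmetries $\mt_{\bm\mu} = \mt_{-\bm\mu} = \mt_{\bm\mu+\bm\lambda}$ are in hand, and the sole delicate point is the handling of the constant $-\frac{2}{\pi}\arctanh(1/\sqrt{3})$ in $\mt_{\bm 0}$, which is precisely the shift subtracted on the left-hand side for $j = 0$ (and which is absent for $j \in \{1,2,3\}$ since $\bm{\mu_j}, \bm{\mu_j}+\bm\l \notin \Z^2$). Combining all four identifications finishes the proof.
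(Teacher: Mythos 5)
Your proposal is correct and follows essentially the same route as the paper, which (as in the analogous Lemma \ref{lem:l1l4rew}) reduces the identity to matching cosets via the elementary symmetries $\mt_{\bm\mu}=\mt_{-\bm\mu}=\mt_{\bm\mu+\bm\lambda}$ applied to the decomposition of Lemma \ref{lem:l5l8}. Your explicit tracking of the $\d_{\bm\mu\in\Z^2}$ constant term $-\frac{2}{\pi}\arctanh\lp\frac{1}{\sqrt{3}}\rp$ for $j=0$ is exactly the point that distinguishes this case, and you handle it correctly.
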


\subsection{The corresponding Maass forms}

We next follow equation \eqref{eq:defn_mock_maass_theta_fnc} and define the mock Maass theta functions $\mT_{\bm\mu}$ given the quadratic form $Q$ in \eqref{eq:quadratic_form_example2} and vectors $\bm{c_1}$, $\bm{c_2}$ in \eqref{eq:cj_vector_example2}. For this purpose, we first find the vectors in $C_Q^\perp$ that correspond to $\bm{c_1}$ and $\bm{c_2}$ as $\bm{c_1^\perp}=\frac12(1\ -1)^T$ and $\bm{c_2^\perp}=\frac12(1\ \ 1)^T$. Then we get
\begin{align*}
\mT_{\bm\mu}(\t) &= \frac{\sqrt{\t_2}}{2} 
\mspace{-8mu}
\sum_{\bm n\in(\Z^2+\bm\mu)\setminus\{\bm0\}}
\mspace{-12mu}
 (1+\sgn(n_1+n_2)\sgn(n_1-n_2)) K_0\left(2\pi\left(6n_1^2-2n_2^2\right)\t_2\right) e^{2\pi i\left(6n_1^2-2n_2^2\right)\t_1}\\
& \ \ + \frac{\sqrt{\t_2}}{2} 
\mspace{-8mu}
\sum_{\bm n\in(\Z^2+\bm\mu)\setminus\{\bm0\}}
\mspace{-12mu}	
 (1-\sgn(3n_1+n_2)\sgn(3n_1-n_2)) K_0\left(-2\pi\left(6n_1^2-2n_2^2\right)\t_2\right) e^{2\pi i\left(6n_1^2-2n_2^2\right)\t_1}\\
& \qquad + 2\arctanh\left(\frac{1}{\sqrt{3}}\right) \sqrt{\t_2}  \d_{\bm\mu\in\Z^2} . 
\end{align*}
We also have the corresponding modular completion given by equation \eqref{eq:mock_maass_theta_fnc_completion} and Proposition \ref{prop:quadzero}:
\begin{equation*}
\wh{\mT}_{\bm\mu} = \mT_{\bm\mu}+ \varphi_{\bm\mu}^{[\bm{c_1}]}  - \varphi_{\bm\mu}^{[\bm{c_2}]}.
\end{equation*}
As before these functions satisfy the following elementary properties:
\begin{equation*}\label{eq:F_L5_8_identifications}
\mT_{\bm\mu} = \mT_{-\bm\mu} = \mT_{{(-\mu_1\ \mu_2)^T}} = \mT_{{(\mu_1\ -\mu_2)^T}},\qquad \wh{\mT}_{\bm\mu} = \wh{\mT}_{-\bm\mu} = \wh{\mT}_{{(-\mu_1\ \mu_2)^T}} = \wh{\mT}_{{(\mu_1\ -\mu_2)^T}}.
\end{equation*}

Since we are interested in the linear combinations of $\mt_{\bm\mu}$'s given in Lemma \ref{lem:l5l8rew}, we make the following  definitions for $j\in\{0,1,2,3\}$:
\begin{equation*}
\mG_j := \mT_{\bm{\mu_j}} + \mT_{\bm{\mu_j}+\bm\l}
\andd
\wh{\mG}_j := \wh{\mT}_{\bm{\mu_j}} + \wh{\mT}_{\bm{\mu_j}+\bm\l}.
\end{equation*}
We now proceed as in Lemma \ref{lem:modular_obstruction_cancellation} to show that the shadow contributions to $\wh{\mG}_j$ vanish. 

\begin{lem}\label{lem:whP}
For $j\in\{0,1,2,3\}$ we have
\begin{equation*}
\mG_j = \wh\mG_j.
\end{equation*}
\end{lem}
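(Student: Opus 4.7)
The plan is to replay the argument of Lemma \ref{lem:modular_obstruction_cancellation} essentially verbatim for the new quadratic form $Q(\bm n) = 6n_1^2 - 2n_2^2$. First I would check that Proposition \ref{prop:quadzero} is applicable: since $\sqrt{3} \notin \Q$, the equation $6n_1^2 = 2n_2^2$ has no nontrivial rational solutions, so
\begin{equation*}
\wh\mG_j - \mG_j = \sum_{\bm a \in \{\bm{\mu_j},\, \bm{\mu_j}+\bm\l\}} \lp \varphi_{\bm a}^{[\bm{c_1}]} - \varphi_{\bm a}^{[\bm{c_2}]} \rp,
\end{equation*}
and it suffices to prove that $\varphi_{\bm{\mu_j}}^{[\bm c]} + \varphi_{\bm{\mu_j}+\bm\l}^{[\bm c]} = 0$ for every $\bm c \in \{\bm{c_1}, \bm{c_2}\}$ and every $j \in \{0,1,2,3\}$.

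The key mechanism from Lemma \ref{lem:modular_obstruction_cancellation} is that such a cancellation is automatic whenever one can find $\gamma \in \GL_2(\Z)$ with $\gamma^T A \gamma = A$, $\det(\gamma)=-1$, $\gamma \bm c = \pm \bm c$, $\gamma \bm\l \equiv \bm\l \pmod{\Z^2}$, and such that $\gamma$ sends $\bm{\mu_j}$ to some element of $\{\pm \bm{\mu_j}, \pm(\bm{\mu_j}+\bm\l)\}$ modulo $\Z^2$: Lemma \ref{lemma:sig} together with the identity $\varphi_{-\bm a}^{[\bm c]} = \varphi_{\bm a}^{[\bm c]}$ (Lemma 5.1 of \cite{Zw}) then delivers the pairwise vanishing. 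For the present situation, direct analogues of the matrices used there are
\begin{equation*}
\gamma_1 = \begin{pmatrix} -2 & -1 \\ 3 & 2 \end{pmatrix}, \qquad \gamma_2 = \begin{pmatrix} 2 & -1 \\ 3 & -2 \end{pmatrix}.
\end{equation*}
A short matrix computation confirms that both preserve $A = \begin{pmatrix} 12 & 0 \\ 0 & -4 \end{pmatrix}$, have determinant $-1$, and satisfy $\gamma_1 \bm{c_1} = \bm{c_1}$, $\gamma_2 \bm{c_2} = -\bm{c_2}$, as well as $\gamma_k \bm\l \equiv \bm\l \pmod{\Z^2}$. For the cosets one computes $\gamma_1 \bm{\mu_j} \equiv (-j/3, j/2)^T$ and $\gamma_2 \bm{\mu_j} \equiv (j/3, j/2)^T$ modulo $\Z^2$; running through $j \in \{0,1,2,3\}$ shows that each of these vectors is congruent to one of $\pm \bm{\mu_j}$ or $\pm(\bm{\mu_j}+\bm\l)$ modulo $\Z^2$, which is exactly what the symmetry argument requires.

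I do not anticipate any real obstacle beyond what was already handled in Lemma \ref{lem:modular_obstruction_cancellation}; the verifications above are routine and can be packaged concisely. The only mild subtlety is the case $j=0$, where $\bm{\mu_0} = \bm 0$ contributes the nonzero constant term $2 \arctanh(1/\sqrt{3})\sqrt{\t_2}$ to $\mT_{\bm{\mu_0}}$. However, this term is present in both $\mT_{\bm{\mu_0}}$ and $\wh{\mT}_{\bm{\mu_0}}$ and therefore plays no role in the shadow cancellation: for $j=0$ the automorphism $\gamma_k$ simply fixes $\bm 0$, forcing $\varphi_{\bm 0}^{[\bm{c_k}]}=0$ individually, and the same argument applied to $\bm\l$ handles the second summand. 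Assembling the four cases gives $\wh\mG_j = \mG_j$ as claimed.
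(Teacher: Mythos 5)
Your proposal is correct and follows exactly the route the paper intends: the paper gives no separate proof of Lemma \ref{lem:whP}, merely stating that one proceeds as in Lemma \ref{lem:modular_obstruction_cancellation}, and your argument supplies precisely the missing details (the automorphisms $\gamma_1=\pmat{-2&-1\\3&2}$, $\gamma_2=\pmat{2&-1\\3&-2}$ do preserve $A=\pmat{12&0\\0&-4}$, have determinant $-1$, fix $\bm{c_1}$ resp.\ negate $\bm{c_2}$, and send each $\bm{\mu_j}$ into $\{\pm\bm{\mu_j},\pm(\bm{\mu_j}+\bm\l)\}$ modulo $\Z^2$, as I have checked). Your remark on the $j=0$ constant term is also accurate, since the $\bm n=\bm 0$ term of $\varphi_{\bm 0}^{[\bm c]}$ vanishes by definition and the constant $(t_2-t_1)\sqrt{\t_2}$ appears identically in $\mT_{\bm 0}$ and $\wh\mT_{\bm 0}$.
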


Then following the arguments of Proposition \ref{prop:L_1_4_modular_transformation}, we find the modular properties of the Maass forms $\mG_j$.

\begin{prop}\label{prop:L_5_8_modular_transformation}
For $j\in\{0,1,2,3\}$ the functions $\mG_j$ transform like vector-valued modular function under $\GG_0(2)$:
\begin{equation*}
\mG_j\left(\frac{a\t+b}{c\t+d}\right) = \sum_{k=0}^3 \Phi_M(j,k) \mG_k(\t) \qquad\mbox{for all } M \in \GG_0(2),
\end{equation*}
where the multiplier system $\Phi$ is as follows for $T=\pmat{1&1\\0&1}$ and $R=\pmat{1&0\\2&1}$:
\begin{equation*}
\Phi_T =  \mat{1&0&0&0\\0&\z_6&0&0\\0&0&\z_3^2&0\\0&0&0&-1}\andd\Phi_R = \frac{1}{\sqrt{3}} \mat{0&2\z_{12}&0&\z_4^3\\\z_{12}&0&\z^{11}_{12}&0\\0&\z^{11}_{12}&0&\z_{12}\\\z_4^3&0&2\z_{12}&0}.
\end{equation*}
\end{prop}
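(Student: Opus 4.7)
The strategy is to mirror the proof of Proposition \ref{prop:L_1_4_modular_transformation}, now applied with the quadratic form $Q(\bm n) = 6n_1^2 - 2n_2^2$ and the cosets in $\calS := \{\bm{\mu_j}, \bm{\mu_j} + \bm\l : j = 0,\ldots,3\}$. By Lemma \ref{lem:whP} we have $\mG_j = \wh\mG_j$, so each constituent $\wh\mT_{\bm\mu}$ transforms under $\SL_2(\Z)$ via the Weil representation of Theorem \ref{thm:That_general_modular_transformations}. Since $\GG_0(2) = \langle T, R, -I \rangle$ and $-I$ acts trivially on all $\wh\mT_{\bm\mu}$, it suffices to check the transformation behaviour under $T$ and $R$.

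For $T$, the formula $\wh\mT_{\bm\mu}(\tau + 1) = e^{2\pi i Q(\bm\mu)} \wh\mT_{\bm\mu}(\tau)$ reduces the question to comparing $Q(\bm{\mu_j})$ and $Q(\bm{\mu_j} + \bm\l)$. A direct computation gives $Q(\bm{\mu_j}) = j^2/6$ and $Q(\bm{\mu_j} + \bm\l) - Q(\bm{\mu_j}) = B(\bm{\mu_j}, \bm\l) + Q(\bm\l) = j + 1 \in \Z$, so both summands of $\mG_j$ acquire the common phase $e^{\pi i j^2/3}$. Evaluating at $j = 0,1,2,3$ produces the diagonal $\Phi_T$.

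For $R = \pmat{1 & 0 \\ 2 & 1}$, Theorem \ref{thm:That_general_modular_transformations} together with $|\det A| = 48$ and $Q(\bm m) \in 2\Z$ for $\bm m \in \Z^2$ gives
\begin{equation*}
\psi_R(\bm\mu, \bm\nu) = \frac{e^{\pi i Q(\bm\mu - \bm\nu)}}{8\sqrt{3}} \sum_{\bm m \in \Z^2/2\Z^2} e^{\pi i B(\bm m, \bm\mu - \bm\nu)}.
\end{equation*}
Evaluating the inner orthogonality sum shows that for $\bm\mu \in \calS$ one has $\psi_R(\bm\mu, \bm\nu) = \tfrac{1}{2\sqrt{3}} e^{\pi i Q(\bm\mu - \bm\nu)}$ whenever $\bm\nu$ lies in the sublattice with $6\nu_1, 2\nu_2 \in \Z$, and $\psi_R(\bm\mu, \bm\nu) = 0$ otherwise. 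One then uses the identifications $\wh\mT_{\bm\nu} = \wh\mT_{-\bm\nu} = \wh\mT_{(-\nu_1, \nu_2)^T} = \wh\mT_{(\nu_1, -\nu_2)^T}$ to fold the twelve contributing cosets into eight orbit representatives in $\calS$, and verifies that the combined coefficient of $\wh\mT_{\bm{\mu_k}}$ in the transform of $\mG_j$ equals that of $\wh\mT_{\bm{\mu_k}+\bm\l}$, so that they assemble into $\mG_k$. This $\bm\l$-shift symmetry is immediate from $B(\bm{\mu_j} - \bm{\mu_k}, \bm\l) = j - k \in \Z$, which shows that $e^{\pi i Q(\bm{\mu_j} - \bm{\mu_k} - \bm\l)} = e^{\pi i Q(\bm{\mu_j} + \bm\l - \bm{\mu_k})}$.

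The one bookkeeping subtlety not present in the proof of Proposition \ref{prop:L_1_4_modular_transformation} is that four of the eight elements of $\calS$, namely the $2$-torsion cosets $(0,0), (\tfrac12, 0), (0, \tfrac12), (\tfrac12, \tfrac12)$, have non-trivial stabiliser in $\{\pm 1\}^2$, so the orbit multiplicities in the folding step must be tracked with care rather than just summing four sign variations as in the L1--L4 case. With this bookkeeping in place, direct computation of the eight combined coefficients $\lambda(\bm{\mu_j}, \bm{\mu_k})$ yields the matrix $\Phi_R$; the vanishing of all entries with $j + k$ even reflects cancellations such as $e^{2\pi i/3} + e^{-\pi i/3} = 0$ (together with the analogous $1 + (-1) = 0$ among the $2$-torsion cosets), which is precisely what is needed for the transformation to close on the four-dimensional span of the $\mG_j$.
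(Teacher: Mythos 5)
Your proposal is correct and follows essentially the same route as the paper, which proves this proposition simply by invoking the argument of Proposition \ref{prop:L_1_4_modular_transformation}; your computations of $\Phi_T$ via $Q(\bm{\mu_j})=\frac{j^2}{6}$ and $Q(\bm{\mu_j}+\bm\l)-Q(\bm{\mu_j})=j+1$, and of $\psi_R$ via the orthogonality sum over $\Z^2/2\Z^2$, check out (e.g.\ the entries $\lambda(\bm{\mu_0},\bm{\mu_1})=\frac{2\zeta_{12}}{\sqrt3}$ versus $\lambda(\bm{\mu_1},\bm{\mu_0})=\frac{\zeta_{12}}{\sqrt3}$ come out exactly as you describe from the differing orbit sizes). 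The one genuinely new point relative to the $L_1$--$L_4$ case, namely that the $2$-torsion cosets $(0,0),(\tfrac12,0),(0,\tfrac12),(\tfrac12,\tfrac12)$ have smaller orbits under $\bm\nu\mapsto(\pm\nu_1,\pm\nu_2)$ so the folding multiplicities must be tracked, is correctly identified and handled.
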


\subsection{Quantum modularity}

In summary, from Proposition \ref{prop:L_5_8_modular_transformation} we know that $\mG_j$ for $j\in\{0,1,2,3\}$ form a vector-valued Maass waveform for $\GG_0(2)$. We next apply the findings of Section \ref{sec:maass_forms} on these functions. First, we note that equation \eqref{eq:defn_u_and_error} and Proposition \ref{prop:uj_fourier_expansion} imply that the functions $\mg_j$ (which are basically equal to the functions $L_5,\dots,L_8$ thanks to Lemma \ref{lem:l5l8rew}) are related to the $\mG_j$'s as
\begin{equation*}
\mg_j(\t) = -\frac2\pi\int_\t^{i\infty} [\mG_j(z),R_\t(z)].
\end{equation*}
Then Proposition \ref{prop:uj_modularity} gives their modular transformation for any $M:=\pmat{a&b\\c&d}\in\GG_0(2)$ and $\t\in\H$ with $\t_1\ne-\frac dc$:
\begin{equation*}
\mg_j\lp \frac{a\t+b}{c\t+d}\rp 
= \sgn(c\t_1+d) (c\t+d)  \sum_{k=0}^3 \Phi_M(j,k) 
\lp \mg_k(\t)+\mcalG_{k,-\frac{d}{c}}(\t)\rp.
\end{equation*}
Here the obstruction to modularity $\mcalG_{k,-\frac{d}{c}}$ is a holomorphic function on $\C \setminus ( -\frac{d}{c} + i \R )$ (according to Proposition \ref{prop:modularity_error_holomorphy}) defined as in equation \eqref{eq:defn_u_and_error}:
\begin{equation*}
\mcalG_{j,\varrho}(\t) := \frac2\pi\int_{\varrho}^{i\infty} [\mG_j(z),R_\t(z)] .
\end{equation*}

Next we follow Proposition \ref{prop:uj_vertical_limit} and define the functions $\mfrakg_j : \calQ_{\GG_0 (2)} \to \C$ with $j\in\{0,1,2,3\}$ by 
\begin{equation}\label{eq:L5_8_quantum_limit_defn}
\mfrakg_j(x) := \lim_{t\to0^+} \lp \mg_j(x+it) - \frac{2 \arctanh \lp \frac{1}{\sqrt{3}} \rp}{\pi |c_x| t}  \Phi_{M_x^{-1}}(j,0) 
\rp ,
\end{equation}
where $x=-\frac{d_x}{c_x}$ and $M_x=\pmat{a_x&b_x\\c_x&d_x}\in\GG_0 (2)$.
Then finally by Theorem \ref{thm:frak_uj_quantum_modular_transformations} we find the quantum modular properties for (the finite parts of) the rational limits of $L_5, \ldots, L_8$.
\begin{prop}\label{prop:L5_8_quantum_modular}
The functions $\mfrakg_j : \calQ_{\GG_0 (2)} \to \C$ with $j\in\{0,1,2,3\}$ form a vector-valued quantum modular form transforming as follows for any $M := \pmat{a&b\\c&d}\in \GG_0 (2)$ and $x \in \mathcal{Q}_{\GG_0 (2)} \setminus \{-\frac{d}{c} \}$:
\begin{equation*}
\mfrakg_j\lp \frac{ax+b}{cx+d}\rp 
= |cx+d| \sum_{k=0}^3 \Phi_M(j,k) 
\lp \mfrakg_k(x)+\mcalG_{k,-\frac{d}{c}}(x)\rp .
\end{equation*}
Here the multiplier system $\Phi$ is as given in Proposition \ref{prop:L_5_8_modular_transformation}.
\end{prop}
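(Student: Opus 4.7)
The plan is to obtain Proposition \ref{prop:L5_8_quantum_modular} as a direct specialization of Theorem \ref{thm:frak_uj_quantum_modular_transformations} to the Maass form $(\mG_j)_{j=0}^3$. All of the structural input has already been assembled earlier in the section: Proposition \ref{prop:L_5_8_modular_transformation} provides the modular transformation law with multiplier $\Phi$, Lemma \ref{lem:whP} shows the shadow contributions cancel so that the $\mG_j$ are genuine Maass waveforms (with polynomial growth at the cusps by the footnote after Lemma \ref{lem:modular_obstruction_cancellation}), and the defining integral $\mg_j(\t) = -\frac{2}{\pi}\int_\t^{i\infty}[\mG_j(z),R_\t(z)]$ identifies each $\mg_j$ with the corresponding $u_j$ of Section \ref{sec:maass_forms}.

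The one bookkeeping point that needs to be verified carefully is the match between the constant terms $c_j$ of $\mG_j$ in the sense of \eqref{eq:U_fourier_expansion} and the singular shifts subtracted off in the definition \eqref{eq:L5_8_quantum_limit_defn} of $\mfrakg_j$. From the explicit formula for $\mT_{\bm\mu}$ together with the observation that $\bm{\mu_j}+\bm\l = ((j+3)/6,\,1/2)^T \notin \Z^2$ for any $j$ while $\bm{\mu_j} = (j/6,\,0)^T \in \Z^2$ only for $j=0$, one reads off $c_0 = 2\arctanh(1/\sqrt{3})$ and $c_1 = c_2 = c_3 = 0$. Consequently, the quantity $\g_{j,x}$ appearing in Proposition \ref{prop:uj_vertical_limit} specializes to
\[
\g_{j,x} \;=\; \frac{2\arctanh(1/\sqrt{3})}{|c_x|}\, \Phi_{M_x^{-1}}(j,0),
\]
which is exactly the term removed in \eqref{eq:L5_8_quantum_limit_defn}, confirming that $\mfrakg_j$ coincides with the function $\mathfrak{u}_j$ built from $\mG_j$ in Section \ref{sec:maass_forms}.

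With this identification in hand, Proposition \ref{prop:uj_vertical_limit} guarantees that the limits defining $\mfrakg_j(x)$ exist for every $x \in \calQ_{\GG_0(2)}$, and Theorem \ref{thm:frak_uj_quantum_modular_transformations} then immediately yields the asserted quantum modular transformation law with multiplier $\Phi$ and obstruction terms $\mcalG_{k,-d/c}$. I do not anticipate any genuine obstacle in this step: the real work of the proof has already been carried out in establishing modularity (Proposition \ref{prop:L_5_8_modular_transformation}) and the vanishing of the shadow contributions (Lemma \ref{lem:whP}), and in setting up the general noncuspidal framework of Section \ref{sec:maass_forms}. The subtlety of handling the divergent pieces as $t\to 0^+$, which is what makes this case more delicate than the $L_1,\dots,L_4$ and $L_9,\dots,L_{12}$ families, is absorbed entirely into the constant-term analysis in the proofs of Propositions \ref{prop:uj_vertical_limit} and Lemma \ref{lem:frak_uj_limit_path_deformation}.
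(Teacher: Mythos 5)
Your proposal is correct and follows exactly the route the paper takes: Proposition \ref{prop:L5_8_quantum_modular} is obtained by specializing Theorem \ref{thm:frak_uj_quantum_modular_transformations} to the Maass form $(\mG_j)_{j=0}^3$ furnished by Lemma \ref{lem:whP} and Proposition \ref{prop:L_5_8_modular_transformation}, with the constant-term identification $c_0 = 2\arctanh(1/\sqrt{3})$, $c_1=c_2=c_3=0$ matching the subtraction in \eqref{eq:L5_8_quantum_limit_defn}. Your explicit verification that $\g_{j,x}$ specializes to the term removed there is the only bookkeeping the paper leaves implicit, and you have done it correctly.
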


\section{The functions $L_9$\textendash$L_{12}$}\label{sec:L9_L12_analysis}

In this section we study our final family of functions from \cite{LoOs} with $L_9,\dots,L_{12}$ that count ideals in the ring of integers of $\Q(\sqrt{6})$.

\subsection{Rewriting as theta functions}

We begin like the previous two families of functions and rewrite $L_9,\dots,L_{12}$ in terms of theta functions.

\begin{lem}\label{lem:l9l12}
We have
\begin{align*}
q^{-\frac{9}{16}}L_9(q) &= \frac12\left(\sum_{\bm n\in\Z^2+\left(0\ \frac38\right)^T} + \sum_{\bm n\in\Z^2+\left(\frac12\ \frac78\right)^T}\right) (1+\sgn(n_1+n_2)\sgn(n_1-n_2)) q^{6n_1^2-4n_2^2},\\
q^{-\frac{17}{16}}L_{10}(q) &= \frac12\left(\sum_{\bm n\in\Z^2+\left(0\ \frac18\right)^T} + \sum_{\bm n\in\Z^2+\left(\frac12\ \frac58\right)^T}\right) (1+\sgn(n_1+n_2)\sgn(n_1-n_2)) q^{6n_1^2-4n_2^2},\\
q^{\frac{5}{48}}L_{11}(q) &= \frac12\left(\sum_{\bm n\in\Z^2+\left(\frac16\ \frac18\right)^T} + \sum_{\bm n\in\Z^2+\left(\frac23\ \frac58\right)^T}\right) (1+\sgn(n_1+n_2)\sgn(n_1-n_2)) q^{6n_1^2-4n_2^2},\\
q^{-\frac{19}{48}}L_{12}(q) &= \frac12\left(\sum_{\bm n\in\Z^2+\left(\frac16\ \frac38\right)^T} + \sum_{\bm n\in\Z^2+\left(\frac23\ \frac78\right)^T}\right) (1+\sgn(n_1+n_2)\sgn(n_1-n_2)) q^{6n_1^2-4n_2^2}.
\end{align*}
\end{lem}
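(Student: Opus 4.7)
The plan is to parallel the proofs of Lemmas \ref{lem:l1l4} and \ref{lem:l5l8} essentially verbatim, since Lemma \ref{lem:l9l12} has the same shape: each $L_j$ with $j\in\{9,10,11,12\}$ is to be rewritten as the average of a false-indefinite theta series for $Q(\bm n)=6n_1^2-4n_2^2$ over two shifted cosets.

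First, I would invoke the indefinite-theta representations of $L_9,\ldots,L_{12}$ from \cite{Lo} (the analogue for $\Q(\sqrt{6})$ of the identities quoted in Lemmas \ref{lem:l1l4} and \ref{lem:l5l8}). These should express $q^{-c_j}L_j(q^{16})$ for $j\in\{9,10\}$ and $q^{-c_j}L_j(q^{48})$ for $j\in\{11,12\}$ as sums of four double series of the form
\[
\sum_{\substack{n\ge n_0\\ -n\le k\le n}} q^{\alpha n^2-\beta k^2}
\]
(and variants with $-n\le k\le n-1$ or $-n-1\le k\le n$), where $n$ and $k$ range over integers with prescribed residues modulo $16$ or $48$.

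Next, I would discard the lower bound $n\ge n_0$, which is redundant once the inner constraints on $k$ are imposed, and for each sum whose $k$-range is not symmetric under $k\mapsto -k$ I would apply $n\mapsto -n$ (or $n\mapsto -n-1$) to a matching copy in order to extend the $n$-summation to all of $\Z$. Each pair of sums then consolidates into a sum over $(n,k)\in\Z^2$ weighted by the factor $\tfrac12(1+\sgn(n+k)\sgn(n-k))$, which equals $1$ when $|k|<n$, equals $\tfrac12$ when $|k|=n\ne 0$, and vanishes otherwise. Relabeling $(n,k)$ as $(n_1,n_2)$ and undoing the rescalings $q\mapsto q^{1/16}$ or $q\mapsto q^{1/48}$ then produces the stated fractional prefactors $q^{-9/16}$, $q^{-17/16}$, $q^{5/48}$, $q^{-19/48}$ together with the cosets $(0,\tfrac38)^T$, $(\tfrac12,\tfrac78)^T$, $(\tfrac16,\tfrac18)^T$, and so on.

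The only real obstacle is routine bookkeeping: aligning the integer residues appearing in \cite{Lo}'s identities for $\Q(\sqrt{6})$ with the specific cosets $\Z^2+\bm\mu$ listed in the statement, and verifying the sign pattern in the symmetrization step. No genuinely new ideas beyond those already employed in Lemmas \ref{lem:l1l4} and \ref{lem:l5l8} should be required.
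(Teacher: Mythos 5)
Your proposal follows the paper's proof essentially verbatim: the paper likewise imports the four double-sum identities for $L_9,\ldots,L_{12}$ from the proof of Theorem 1.3 of \cite{Lo} (with rescalings $q^{16}$ for $L_9,L_{10}$ and $q^{96}$ --- not $q^{48}$ --- for $L_{11},L_{12}$), drops the redundant lower bounds on $n$, and symmetrizes via $n\mapsto-n$ or $n\mapsto-n-1$ to arrive at the weight $\frac12(1+\sgn(n_1+n_2)\sgn(n_1-n_2))$ over all of $\Z^2+\bm\mu$. One small correction to your description: that weight equals $1$ on the full double cone $|n_2|<|n_1|$ (not only for $|n_2|<n_1$) and $\tfrac12$ when $|n_2|=|n_1|$; this is precisely why the $n\mapsto-n$ copy is needed to fill in the $n_1<0$ half of the cone.
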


\begin{proof}
Again the proof is similar to that of Lemma \ref{lem:l1l4} and the identities we require are taken from the proof of Theorem 1.3 of \cite{Lo}
\begin{multline*}
q^{-9}L_9\left(q^{16}\right) = \sum_{\substack{n\ge1\\-n\le j\le n-1}} q^{6(4n)^2-(8j+3)^2} + \sum_{\substack{n\ge0\\-n\le j\le n}} q^{6(4n+2)^2-(8j+1)^2}\\
+ \sum_{\substack{n\ge1\\-n+1\le j\le n}} q^{6(4n)^2-(8j-3)^2} + \sum_{\substack{n\ge0\\-n\le j\le n}} q^{6(4n+2)^2-(8j-1)^2},
\end{multline*}
\begin{multline*}
q^{-17}L_{10}\left(q^{16}\right) = \sum_{\substack{n\ge1\\-n\le j\le n-1}} q^{6(4n)^2-(8j+1)^2} + \sum_{\substack{n\ge0\\-n\le j\le n}} q^{6(4n+2)^2-(8j+3)^2}\\
+\sum_{\substack{n\ge1\\-n+1\le j\le n}} q^{6(4n)^2-(8j-1)^2} + \sum_{\substack{n\ge0\\-n\le j\le n}} q^{6(4n+2)^2-(8j-3)^2},
\end{multline*}
\begin{multline*}
q^{10}L_{11}\left(q^{96}\right) = \sum_{\substack{n\ge0\\-n\le j\le n}} \left(q^{(24n+4)^2-6(8j+1)^2}+q^{(24n+20)^2-6(8j+1)^2}\right)\\
+\sum_{\substack{n\ge0\\-n-1\le j\le n}} \left(q^{(24n+32)^2-6(8j+3)^2}+q^{(24n+16)^2-6(8j+3)^2}\right),
\end{multline*}
\begin{multline*}
q^{-38}L_{12}\left(q^{96}\right) = \sum_{\substack{n\ge1\\-n\le j\le n-1}} q^{(24n+4)^2-6(8j+3)^2} + \sum_{\substack{n\ge0\\-n-1\le j\le n}} q^{(24n+20)^2-6(8j+3)^2}\\
+\sum_{\substack{n\ge-1\\-n-1\le j\le n+1}} q^{(24n+32)^2-6(8j+1)^2} + \sum_{\substack{n\ge0\\-n\le j\le n}} q^{(24n+16)^2-6(8j+1)^2}. \qedhere
\end{multline*}
\end{proof}
For more compact expressions, we 
define the quadratic form
\begin{equation}\label{eq:quadratic_form_example3}
Q(\bm n):=6n_1^2-4n_2^2
\end{equation}
and the vectors 
\begin{equation}\label{eq:cj_vector_example3}
\bm{c_1}:=\frac{1}{2\sqrt{3}}(-2,3)^T
\andd
\bm{c_2}:=\frac{1}{2\sqrt{3}}(2,3)^T
\end{equation}
in $C_Q$ (as defined in Remark \ref{rem:P_choice}). Then the theta functions in \eqref{eq:defn_mock_maass_q_series} are
\begin{align*}\label{eq:vartheta_example3}
\mt_{\bm{\mu}}(\tau)
=&
\frac12 \sum_{\bm n \in \Z^2 + \bm \mu} 
\lp 1 + \sgn(n_1+n_2) \sgn(n_1-n_2) \rp q^{6n_1^2 - 4n_2^2}
\quad
\mbox{for } \bm{\mu} \not\in \Z^2.
\end{align*}
With this expression at hand, we can rewrite the results in Lemma \ref{lem:l9l12} as follows.
\begin{lem}\label{lem:l9l12rew}
We have 
\begin{equation*}
q^{-\frac{9}{16}} L_9 (q) = \mh_3 (\t),  \qquad
q^{-\frac{17}{16}} L_{10} (q)  = \mh_0 (\t),\qquad
q^{\frac{5}{48}} L_{11} (q) = \mh_1 (\t),\qquad
q^{-\frac{19}{48}} L_{12} (q)  = \mh_2 (\t),
\end{equation*}
where
\begin{equation*}
\mh_j (\t) := \mt_{\bm{\mu_j}}(\t) + \mt_{\bm{\mu_j}+\bm\l}(\t) 
\quad \mbox{with }
\bm{\mu_j}:=\lp \frac j6\ \ \frac18 \rp^T \mbox{ and } 
\bm\l:=\lp \frac12\ \ \frac12 \rp^T.
\end{equation*}
\end{lem}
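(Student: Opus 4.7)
The plan is to reduce Lemma \ref{lem:l9l12rew} to Lemma \ref{lem:l9l12} using the elementary symmetries of $\mt_{\bm\mu}$, mirroring the very short proofs of Lemmas \ref{lem:l1l4rew} and \ref{lem:l5l8rew}. First I will note that for the quadratic form $Q(\bm n)=6n_1^2-4n_2^2$ of this section, both $Q(\bm n)$ and the kernel $1+\sgn(n_1+n_2)\sgn(n_1-n_2)$ are invariant under each of the reflections $n_i\mapsto-n_i$. Combined with the trivial $\Z^2$-periodicity in $\bm\mu$, this gives
\begin{equation*}
\mt_{\bm\mu}=\mt_{-\bm\mu}=\mt_{(-\mu_1,\mu_2)^T}=\mt_{(\mu_1,-\mu_2)^T},\qquad \mt_{\bm\mu+\bm\ell}=\mt_{\bm\mu}\quad\text{for all }\bm\ell\in\Z^2.
\end{equation*}

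Using these identities, I will verify case by case that the two cosets appearing in each expression in Lemma \ref{lem:l9l12} are equivalent to one of the cosets $\bm{\mu_j}$ or $\bm{\mu_j}+\bm\l$ from the statement. For $L_{10}$ and $L_{11}$ the match is immediate, as the cosets already coincide on the nose with $\{\bm{\mu_0},\bm{\mu_0}+\bm\l\}$ and $\{\bm{\mu_1},\bm{\mu_1}+\bm\l\}$, respectively. For $L_9$, reflecting the second coordinate sends $(0,3/8)^T\mapsto(0,-3/8)^T\equiv(0,5/8)^T\equiv\bm{\mu_3}+\bm\l\pmod{\Z^2}$ and $(1/2,7/8)^T\mapsto(1/2,-7/8)^T\equiv(1/2,1/8)^T=\bm{\mu_3}$. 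For $L_{12}$, the same style of manipulation yields
\begin{equation*}
\mt_{(1/3,1/8)^T}=\mt_{(2/3,7/8)^T}\qquad\text{and}\qquad \mt_{(5/6,5/8)^T}=\mt_{(1/6,3/8)^T},
\end{equation*}
matching the two cosets used in Lemma \ref{lem:l9l12}.

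Since the prefactors $q^{-9/16}$, $q^{-17/16}$, $q^{5/48}$, $q^{-19/48}$ already coincide with those in Lemma \ref{lem:l9l12}, no additional normalization is required. I do not anticipate any real obstacle: once the $\Z^2$-periodicity and the reflection symmetries are in hand, the argument is a short coset bookkeeping essentially identical to the closing display in the proof of Lemma \ref{lem:l1l4rew}.
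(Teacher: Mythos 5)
Your proof is correct and follows essentially the same route as the paper, which (as in the proof of Lemma \ref{lem:l1l4rew}) derives these identities from the relations $\mt_{\bm\mu}=\mt_{-\bm\mu}=\mt_{(\mu_1\ -\mu_2)^T}$ together with $\Z^2$-periodicity in $\bm\mu$. Your coset identifications all check out, including the $L_{12}$ case via $-\lp\frac13\ \frac18\rp^T\equiv\lp\frac23\ \frac78\rp^T$ and $-\lp\frac56\ \frac58\rp^T\equiv\lp\frac16\ \frac38\rp^T\pmod{\Z^2}$.
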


\subsection{The corresponding Maass forms}

Our next step is to define the mock Maass theta function $\mT_{\bm\mu}$ as in equation \eqref{eq:defn_mock_maass_theta_fnc} for the quadratic form $Q$ in \eqref{eq:quadratic_form_example3} and vectors $\bm{c_1}$, $\bm{c_2}$ in \eqref{eq:cj_vector_example3}. We first find the vectors in $C_Q^\perp$ that correspond to $\bm{c_1}$ and $\bm{c_2}$ as $\bm{c_1^\perp}=\frac{1}{\sqrt{2}}(1\ -1)^T$ and $\bm{c_2^\perp}=\frac{1}{\sqrt{2}}(1\ \ 1)^T$ to get (for $\bm{\mu} \not\in \Z^2$)
\begin{align*}
\mT_{\bm\mu}(\t) &= \frac{\sqrt{\t_2}}{2} \sum_{\bm n\in\Z^2+\bm\mu} \lp 1 + \sgn(n_1+n_2) \sgn(n_1-n_2) \rp K_0\left(2\pi \left(6n_1^2 - 4n_2^2\right)\tau_2\right) e^{2\pi i \left(6n_1^2 - 4n_2^2\right)\tau_1}\\
&+\frac{\sqrt{\t_2}}{2} \sum_{\bm n \in \Z^2 + \bm{\mu}} \lp 1 - \sgn(3n_1+2n_2) \sgn(3n_1-2n_2) \rp K_0\left(-2\pi (6n_1^2 - 4n_2^2) \tau_2\right) e^{2\pi i \left(6n_1^2 - 4n_2^2\right) \tau_1} .
\end{align*}
These mock Maass theta functions have modular completions,
\begin{equation*}
\wh{\mT}_{\bm\mu} = \mT_{\bm\mu}+ \varphi_{\bm\mu}^{[\bm{c_1}]}  - \varphi_{\bm\mu}^{[\bm{c_2}]},
\end{equation*}
as described in equation \eqref{eq:mock_maass_theta_fnc_completion} and Proposition \ref{prop:quadzero}.
Like the other two cases, these functions satisfy the following elementary properties:
\begin{equation*}\label{eq:F_L9_12_identifications}
\mT_{\bm\mu} = \mT_{-\bm\mu} = \mT_{{(-\mu_1\ \mu_2)^T}} = \mT_{{(\mu_1\ -\mu_2)^T}},\qquad \wh{\mT}_{\bm\mu} = \wh{\mT}_{-\bm\mu} = \wh{\mT}_{{(-\mu_1\ \mu_2)^T}} = \wh{\mT}_{{(\mu_1\ -\mu_2)^T}}.
\end{equation*}
Since we would like to study the linear combinations of $\mt_{\bm\mu}$'s given in Lemma \ref{lem:l5l8rew}, we define the following for $j\in\{0,1,2,3\}$:
\begin{equation*}
\mH_j  := \mT_{\bm{\mu_j}} +  \mT_{\bm{\mu_j}+\bm{\l}} 
\andd
\wh{\mH}_j  := \wh{\mT}_{\bm{\mu_j}} +  \wh{\mT}_{\bm{\mu_j}+\bm{\l}} .
\end{equation*}
The shadow contributions to the $\wh{\mH}_j$'s vanish following the arguments of Lemma \ref{lem:modular_obstruction_cancellation}.
\begin{lem}\label{lem:whH}
For $j \in \{0,1,2,3\}$ we have
\begin{equation*}
\mH_j  = \wh{\mH}_j  .
\end{equation*}
\end{lem}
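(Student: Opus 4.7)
The plan is to run the same symmetry argument used to prove Lemma \ref{lem:modular_obstruction_cancellation}, adapted to the quadratic form $Q(\bm n) = 6 n_1^2 - 4 n_2^2$ and the cone vectors $\bm{c_1}, \bm{c_2}$ from \eqref{eq:cj_vector_example3}. By Proposition \ref{prop:quadzero},
\begin{equation*}
\wh{\mH}_j - \mH_j = \lp \varphi_{\bm{\mu_j}}^{[\bm{c_1}]} + \varphi_{\bm{\mu_j}+\bm\l}^{[\bm{c_1}]} \rp - \lp \varphi_{\bm{\mu_j}}^{[\bm{c_2}]} + \varphi_{\bm{\mu_j}+\bm\l}^{[\bm{c_2}]} \rp,
\end{equation*}
so it suffices to show that each bracket vanishes for every $j \in \{0,1,2,3\}$.

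As in the proof of Lemma \ref{lem:modular_obstruction_cancellation}, I would reduce this to exhibiting, for each $i \in \{1, 2\}$, an automorphism $\g_i \in \GL_2(\Z)$ satisfying $\g_i^T A \g_i = A$, $\det \g_i = -1$, and $\g_i \bm{c_i} = \pm \bm{c_i}$, together with $\g_i \bm\l \equiv \bm\l \pmod{\Z^2}$ and $\g_i \bm{\mu_j} \equiv \bm{\mu_j} + \bm\l \pmod{\Z^2}$ for all $j$. Given such a $\g_i$, Lemma \ref{lemma:sig} combined with the trivial identity $\varphi_{-\bm a}^{[\bm c]} = \varphi_{\bm a}^{[\bm c]}$ immediately gives $\varphi_{\bm{\mu_j}+\bm\l}^{[\bm{c_i}]} = \varphi_{\g_i \bm{\mu_j}}^{[\bm{c_i}]} = -\varphi_{\bm{\mu_j}}^{[\bm{c_i}]}$, so the $\bm{c_i}$-bracket cancels for each $j$.

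The natural candidates are the $Q$-reflections through the lines orthogonal to $\bm{c_1^\perp}$ and $\bm{c_2^\perp}$ respectively, which, after a short computation, yield the integer matrices $\g_1 = \pmat{-5 & -4 \\ 6 & 5}$ and $\g_2 = \pmat{-5 & 4 \\ -6 & 5}$. A direct verification then confirms $\g_i^T A \g_i = A$, $\det \g_i = -1$, $\g_1 \bm{c_1} = \bm{c_1}$, $\g_2 \bm{c_2} = \bm{c_2}$, and $\g_i \bm\l \equiv \bm\l \pmod{\Z^2}$. The remaining step is the coset check $\g_i \bm{\mu_j} \equiv \bm{\mu_j} + \bm\l \pmod{\Z^2}$, which I would carry out by a short case analysis over the eight pairs $(i, j)$; for instance one computes $\g_1 \bm{\mu_1} = (-\tfrac{4}{3}, \tfrac{13}{8})^T \equiv (\tfrac{2}{3}, \tfrac{5}{8})^T = \bm{\mu_1} + \bm\l \pmod{\Z^2}$, and the remaining seven cases are entirely analogous.

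The genuine obstacle is simply locating the two matrices $\g_1$ and $\g_2$: the simultaneous constraints (preservation of $A$, determinant $-1$, fixing a prescribed line in the negative cone, and implementing the correct coset shift on every $\bm{\mu_j}$) are stringent, and the available reflections are tied to units in the order of $\Q(\sqrt{6})$. Once these matrices are in hand, the conclusion is a direct application of the template in Lemma \ref{lem:modular_obstruction_cancellation}.
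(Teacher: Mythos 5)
Your proposal is correct and follows exactly the route the paper intends: the paper proves this lemma by simply invoking the argument of Lemma \ref{lem:modular_obstruction_cancellation}, and your explicit reflections $\g_1=\pmat{-5&-4\\6&5}$ and $\g_2=\pmat{-5&4\\-6&5}$ do satisfy all the required conditions ($\g_i^TA\g_i=A$ with $A=\pmat{12&0\\0&-8}$, $\det\g_i=-1$, $\g_i\bm{c_i}=\bm{c_i}$, $\g_i\bm\l\equiv\bm\l$, and $\g_i\bm{\mu_j}\equiv\bm{\mu_j}+\bm\l\pmod{\Z^2}$ for all $j$), so Lemma \ref{lemma:sig} gives the cancellation of both brackets. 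Your proof in fact supplies the explicit automorphisms that the paper leaves implicit.
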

Following the proof of Proposition \ref{prop:L_1_4_modular_transformation}, the $\mH_j$'s have the following modular transformations.
\begin{prop}\label{prop:L_9_12_modular_transformation}
The functions $\mH_j$ for $j\in\{0,1,2,3\}$ transform like vector-valued modular function under $\GG_0(2)$:
\begin{equation*}
\mH_j\left(\frac{a\t+b}{c\t+d}\right) = \sum_{k=0}^3 \OO_M(j,k) \mH_k(\t) \qquad \mbox{for all } M \in \GG_0(2),
\end{equation*}
where the multiplier system $\OO$ is as follows for $T=\pmat{1&1\\0&1}$ and $R=\pmat{1&0\\2&1}$:
\begin{align*}
\OO_T &= \mat{\z^{15}_{16}&0&0&0\\0&\z^5_{48}&0&0\\0&0&\z^{29}_{48}&0\\0&0&0&\z^7_{16}},\\
\OO_R &= \sqrt{\frac{2-\sqrt{2}}{12}} \mat{\z^{15}_{16}&2\left(1+\sqrt{2}\right)\z_{48}&2\z^{13}_{48}&\left(1+\sqrt{2}\right)\z^{11}_{16}\\ \left(1+\sqrt{2}\right)\z_{48}&\z^5_{48}&\left(1+\sqrt{2}\right)\z^{41}_{48}&\z^{13}_{48}\\ \z^{13}_{48}&\left(1+\sqrt{2}\right)\z^{41}_{48}&\z^5_{48}&\left(1+\sqrt{2}\right)\z_{48}\\ \left(1+\sqrt{2}\right)\z^{11}_{16}&2\z^{13}_{48}&2\left(1+\sqrt{2}\right)\z_{48}&\z^{15}_{16}}.
\end{align*}
\end{prop}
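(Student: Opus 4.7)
The plan is to follow the approach of Proposition \ref{prop:L_1_4_modular_transformation} line by line. By Lemma \ref{lem:whH}, each $\mH_j$ equals its completion $\wh{\mH}_j = \wh{\mT}_{\bm{\mu_j}} + \wh{\mT}_{\bm{\mu_j}+\bm{\l}}$, whose modular transformations are governed by Theorem \ref{thm:That_general_modular_transformations}. Since $\GG_0(2)$ is generated by $T$, $R$, and $-I$, and the symmetries in \eqref{eq:F_L9_12_identifications} make the action of $-I$ trivial, it suffices to determine the $T$- and $R$-multipliers.

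The $T$-action is essentially immediate. Theorem \ref{thm:That_general_modular_transformations} gives $\wh{\mT}_{\bm{\mu}}(\t+1) = e^{2\pi i Q(\bm{\mu})}\wh{\mT}_{\bm{\mu}}(\t)$, and a short computation yields $Q(\bm{\mu_j}) = j^2/6 - 1/16$ together with $Q(\bm{\mu_j}+\bm{\l}) - Q(\bm{\mu_j}) = j \in \Z$. Hence both summands of $\mH_j$ acquire the common phase $e^{2\pi i Q(\bm{\mu_j})}$, and reduction modulo $1$ gives the diagonal entries $\z^{15}_{16}, \z^5_{48}, \z^{29}_{48}, \z^7_{16}$ of $\OO_T$.

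For the $R$-action I would apply Theorem \ref{thm:That_general_modular_transformations} with $a=d=1$, $b=0$, $c=2$, and $|\det(A)| = 96$. Using that $Q$ takes even values on $\Z^2$ and that $B(\bm m, \bm{\mu}-\bm{\nu}) = 12m_1(\mu_1-\nu_1) - 8m_2(\mu_2-\nu_2)$, the inner sum over $\bm m\in\Z^2/2\Z^2$ factors and evaluates to $4 e^{\pi i Q(\bm{\mu}-\bm{\nu})}$ when both $12(\mu_1-\nu_1)$ and $8(\mu_2-\nu_2)$ are even, and to $0$ otherwise. Since $12\mu_{j,1}$ is even while $8\mu_{j,2}$ is odd for each of $\bm{\mu_j}$ and $\bm{\mu_j}+\bm{\l}$, only $\bm{\nu}$'s sharing these parities contribute, giving
\begin{equation*}
\psi(\bm{\mu}, \bm{\nu}) = \frac{1}{2\sqrt{6}}\, e^{\pi i Q(\bm{\mu}-\bm{\nu})}.
\end{equation*}
The $24$ such cosets partition into $8$ orbits under \eqref{eq:F_L9_12_identifications}: orbits of size $2$ for $\bm{\mu_0}, \bm{\mu_3}, \bm{\mu_0}+\bm{\l}, \bm{\mu_3}+\bm{\l}$ (whose first component lies in $\{0,1/2\}$) and size $4$ for the remaining four representatives. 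Grouping terms by orbit, computing the coefficient of each $\wh{\mT}_{\bm{\mu_k}}$ and its partner $\wh{\mT}_{\bm{\mu_k}+\bm{\l}}$, and verifying (as in Proposition \ref{prop:L_1_4_modular_transformation}) that these two coefficients agree will express $\mH_j(R\t)$ as a linear combination of the $\mH_k$'s.

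The main technical hurdle will be the trigonometric simplification of these orbit sums. After factoring out an appropriate $\z_{48}$-type phase, each orbit sum should collapse via $e^{i\alpha}+e^{i\beta} = 2\cos(\tfrac{\alpha-\beta}{2})e^{i(\alpha+\beta)/2}$ to a multiple of $\cos(\pi/8)$ or $\sin(\pi/8)$, while $\cos(\pi/8)+\sin(\pi/8) = \sqrt{2}\cos(\pi/8)$ governs the combination of contributions from $\wh{\mT}_{\bm{\mu_j}}$ and $\wh{\mT}_{\bm{\mu_j}+\bm{\l}}$. The resulting coefficients are then repackaged using $\sqrt{(2-\sqrt{2})/12} = \sin(\pi/8)/\sqrt{3}$ and $1+\sqrt{2} = \cot(\pi/8)$ to match the stated form of $\OO_R$. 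The only subtlety beyond the $L_1$-$L_4$ case is the explanation of the extra factors of $2$ in rows $0$ and $3$ at columns $1,2$: when the source $\bm{\mu_j}$ has $\mu_{j,1}\in\{0,1/2\}$, the reflection $\bm{\nu}\mapsto(-\nu_1,\nu_2)$ preserves $\psi(\bm{\mu_j},\cdot)$, so the four-element orbit of $\bm{\mu_k}$ for $k\in\{1,2\}$ splits into two pairs of coincident $\psi$-values rather than four distinct contributions.
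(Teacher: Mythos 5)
Your proposal follows exactly the route the paper takes: the paper's proof of this proposition is simply the instruction to repeat the argument of Proposition \ref{prop:L_1_4_modular_transformation} with the data $Q(\bm n)=6n_1^2-4n_2^2$, $|\det(A)|=96$, and $\bm{\mu_j}=(\frac j6\ \frac18)^T$, which is precisely what you do (and your intermediate checks — $Q(\bm{\mu_j})=\frac{j^2}{6}-\frac1{16}$, $Q(\bm{\mu_j}+\bm\l)-Q(\bm{\mu_j})=j$, the prefactor $\frac1{2\sqrt6}$, the $24$ relevant cosets collapsing to the $8$ orbits of the $\bm{\mu_j}$ and $\bm{\mu_j}+\bm\l$, and the origin of the factors of $2$ in rows $0,3$, columns $1,2$ — are all correct). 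Your account is in fact more detailed than the paper's, which omits the computation entirely.
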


\subsection{Quantum modularity}

Summarizing the results above, we find from Proposition \ref{prop:L_5_8_modular_transformation} that $\mH_j$ for $j\in\{0,1,2,3\}$ form a vector-valued Maass form for $\GG_0(2)$. Our next step is to apply the results of Section \ref{sec:maass_forms}. First, we note that \eqref{eq:defn_u_and_error} and Proposition \ref{prop:uj_fourier_expansion} imply that the functions $\mh_j$ (which are basically equal to $L_9,\dots,L_{12}$ thanks to Lemma \ref{lem:l9l12rew}) are related to $\mH_j$'s as
\begin{equation*}
\mh_j(\t) = -\frac2\pi\int_\t^{i\infty} [\mH_j(z),R_\t(z)].
\end{equation*}
Then Proposition \ref{prop:uj_modularity} gives their modular transformation for any $M:=\pmat{a&b\\c&d}\in\GG_0 (2)$ and $\t\in\H$ with $\t_1\ne-\frac dc$ as
\begin{equation*}
\mh_j\lp \frac{a\t+b}{c\t+d}\rp 
= \sgn(c\t_1+d) (c\t+d)  \sum_{k=0}^3 \Omega_M(j,k) 
\lp \mh_k(\t)+\mcalH_{k,-\frac{d}{c}}(\t)\rp .
\end{equation*}
Here the obstruction to modularity $\mcalH_{k,-\frac{d}{c}}$ is a holomorphic function on $\C \setminus ( -\frac{d}{c} + i \R )$ (according to Proposition \ref{prop:modularity_error_holomorphy}) defined as in equation \eqref{eq:defn_u_and_error}:
\begin{equation*}
\mcalH_{j,\varrho}(\t) := \frac2\pi\int_{\varrho}^{i\infty} [\mH_j(z),R_\t(z)] .
\end{equation*}

Next we follow Proposition \ref{prop:uj_vertical_limit} and define the functions $\mfrakh_j : \calQ_{\GG_0 (2)} \to \C$ with $j\in\{0,1,2,3\}$ by 
\begin{equation}\label{eq:L9_12_quantum_limit_defn}
\mfrakh_j(x) := \lim_{t\to0^+} \mh_j(x+it) .
\end{equation}
Then Theorem \ref{thm:frak_uj_quantum_modular_transformations} implies that these functions (given by rational limits of $L_9, \ldots, L_{12}$) form a quantum modular form.
\begin{prop}\label{prop:L9_12_quantum_modular}
The functions $\mfrakh_j : \calQ_{\GG_0 (2)} \to \C$ with $j\in\{0,1,2,3\}$ form a vector-valued quantum modular form transforming as follows for any $M := \pmat{a&b\\c&d}\in \GG_0 (2)$ and $x \in \mathcal{Q}_{\GG_0 (2)} \setminus \{-\frac{d}{c} \}$:
\begin{equation*}
\mfrakh_j\lp \frac{ax+b}{cx+d}\rp 
= |cx+d| \sum_{k=0}^3 \Omega_M(j,k) 
\lp \mfrakh_k(x)+\mcalH_{k,-\frac{d}{c}}(x)\rp .
\end{equation*}
Here the multiplier system $\Phi$ is as given in Proposition \ref{prop:L_9_12_modular_transformation}.
\end{prop}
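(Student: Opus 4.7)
The plan is to deduce Proposition \ref{prop:L9_12_quantum_modular} directly from the general machinery developed in Section \ref{sec:maass_forms} together with the modular properties of the $\mH_j$ already established. The essential observation is that this case is structurally identical to the $L_1,\dots,L_4$ case: because all the characteristics $\bm{\mu_j}$ and $\bm{\mu_j}+\bm{\l}$ lie outside $\Z^2$, the Fourier expansions of the Maass forms $\mH_j$ contain no constant term, so the complications that arose for $L_5,\dots,L_8$ do not appear here.

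First I would verify that the $\mH_j$ meet all the hypotheses of the framework in Section \ref{sec:maass_forms}. Lemma \ref{lem:whH} gives $\mH_j = \wh{\mH}_j$, so the shadow contributions cancel and each $\mH_j$ is a genuine Maass waveform; its Laplace eigenvalue is $\frac14$ by construction of mock Maass theta functions; Proposition \ref{prop:L_9_12_modular_transformation} provides the modular transformation with multiplier $\OO$; and the polynomial growth at cusps follows from the bound on $\wh{\mT}_{\bm\mu}$ noted in the footnote after Lemma \ref{lem:modular_obstruction_cancellation}. Inspecting the Fourier expansion of $\mT_{\bm{\mu_j}} + \mT_{\bm{\mu_j}+\bm{\l}}$, the $\bm n = \bm 0$ contribution is absent since $\bm{\mu_j},\bm{\mu_j}+\bm{\l}\notin\Z^2$ for all $j\in\{0,1,2,3\}$; hence the constants $c_j$ in Fourier expansion \eqref{eq:U_fourier_expansion} all vanish.

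Next I would invoke Proposition \ref{prop:uj_fourier_expansion} to conclude that $\mh_j$ is holomorphic on $\H$ with the expected $q$-series expansion (which, by Lemma \ref{lem:l9l12rew}, recovers $L_9,\ldots,L_{12}$ up to rational powers of $q$). Since $c_k = 0$ for all $k$, Proposition \ref{prop:uj_vertical_limit} yields that the vertical limit
\begin{equation*}
\mfrakh_j(x) := \lim_{t\to 0^+} \mh_j(x+it)
\end{equation*}
exists for every $x \in \mathcal{Q}_{\GG_0(2)}$ with no divergent piece to subtract, so the constants $\g_{j,x}$ in Proposition \ref{prop:uj_vertical_limit} vanish identically. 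This also means the right-hand side of the quantum modular transformation has no extra $|cx+d|\,\g_{j,\frac{ax+b}{cx+d}}$ shift to worry about.

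Finally, I would apply Theorem \ref{thm:frak_uj_quantum_modular_transformations} directly with $N=4$ and multiplier $\Psi_M = \Omega_M$ as in Proposition \ref{prop:L_9_12_modular_transformation} to obtain the claimed identity
\begin{equation*}
\mfrakh_j\lp\frac{ax+b}{cx+d}\rp = |cx+d| \sum_{k=0}^3 \Omega_M(j,k)\lp\mfrakh_k(x) + \mcalH_{k,-\frac{d}{c}}(x)\rp,
\end{equation*}
with $\mcalH_{k,-\frac{d}{c}}$ holomorphic on $\C \setminus (-\frac{d}{c}+i\R)$ (in fact extendable to a cut plane by Remark \ref{rem:error_modularity_holomorphic_extension_cut_plane}) by Proposition \ref{prop:modularity_error_holomorphy}. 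There is no real obstacle here beyond checking that the hypotheses of the Section \ref{sec:maass_forms} theorems apply — the hardest analytic content (existence of the vertical limit, holomorphy of the period function, cancellation of shadows) has already been dispatched in Lemma \ref{lem:whH} and Propositions \ref{prop:modularity_error_holomorphy}, \ref{prop:uj_vertical_limit}, and Theorem \ref{thm:frak_uj_quantum_modular_transformations}.
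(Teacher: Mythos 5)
Your proposal is correct and follows essentially the same route as the paper: Section \ref{sec:L9_L12_analysis} establishes that the $\mH_j$ form a vector-valued Maass form for $\GG_0(2)$ via Lemma \ref{lem:whH} and Proposition \ref{prop:L_9_12_modular_transformation}, and then Proposition \ref{prop:L9_12_quantum_modular} is obtained by directly invoking Theorem \ref{thm:frak_uj_quantum_modular_transformations} with vanishing constant terms, exactly as you describe. Your explicit check that $\bm{\mu_j},\bm{\mu_j}+\bm{\l}\notin\Z^2$ (so that $c_j=0$ and $\g_{j,x}=0$) is a detail the paper leaves implicit but is the correct justification for defining $\mfrakh_j$ by a plain vertical limit in \eqref{eq:L9_12_quantum_limit_defn}.
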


\section*{Appendix: Numerical Examples}

In this appendix, we provide some numeric results on the functions $\mg_j$ and quantum modular forms $\mfrakg_j$ associated to the family $L_5 (q), \ldots, L_8 (q)$. This example is distinguished from the other two by the presence of a nonzero constant term for the Maass form $\mG_j $. Due to this property, for a given $x \in \calQ_{\GG_0 (2)}$ not all components of $\mg_j (x + it)$ converge as $t \to 0^+$. The quantum modular form $\mfrakg_j$ is then defined by simply removing the leading growing term from this object. With the results below we try to exemplify various aspects of these statements. 

Firstly, in Table \ref{tab:g_values} we give the approximate values of $\mg_j (\t)$, $\mg_j (\frac{\t}{2 \t+1})$, and  $\calG_{j,-\frac{1}{2}} (\t)$ for various values of $\t$ that get close to $\frac{11}{12} \in \calQ_{\GG_0 (2)}$. With these numbers one can check that the modular transformation property in Proposition \ref{prop:L_5_8_modular_transformation} is satisfied to the order shown.
\begin{table}[h]
\centering
\begin{tabular}{c | c c c}
\toprule
$\t$  & $\mg_j (\t)$ & $\mg_j( \frac{\t}{2 \t+1} )$ & $\calG_{j,-\frac{1}{2}} (\t)$\\[2pt]
\hline \vspace{-3mm}
\\[0pt]
$\frac{11}{12} + \frac{i}{10^2}$ & 
$\pmat{
2.20152385 - 1.72453350 i \\
0.60261342 -1.52382165 i \\
-1.24906564 +0.76567897 i \\
-0.69157228 +1.38696416i
}$
&
$\pmat{
5.73980919 -2.45135588 i \\
4.58641591+1.38204971i\\
-2.75333449-0.96881340i \\
-8.20830829-3.93452617i
}$
&
$\pmat{
0.48547906-0.41384103i \\
-0.23702464+0.05235489i \\
0.07430784+0.09577127i\\
0.03174146+0.00641302i 
}$
\\[15pt]
$\frac{11}{12} + \frac{i}{10^3}$ & 
$\pmat{
34.06349943-1.54026658i \\
-0.63001497-4.14866463i \\
-1.65163852-0.02850558i \\
-1.30982870+1.39636988i 
}$
&
$\pmat{
6.54926927-10.93296376i \\
48.34343745+26.91081033i \\
-7.53508162-4.15860747i \\
-7.73554006-58.91067177i 
}$
&
$\pmat{
0.48541322-0.41343939i \\
-0.23705194+0.05106932i \\
0.07383483+0.09621073i \\
0.03179379+0.00676626i
}$
\\[15pt]
$\frac{11}{12} + \frac{i}{10^4}$ & 
$\pmat{
348.48937661-1.53336988i \\
-0.32418957-3.49803282i \\
-1.72369780-0.13566295i \\
-1.40302668+1.41219531i
}$
&
$\pmat{
6.37047977-8.44131747i \\
493.58357944+284.00154421i \\
-6.71745517-3.53610552i \\
-7.75436167-573.67311441i
}$
&
$\pmat{
0.48540637-0.41339930i \\
-0.23705391+0.05094084i \\
0.07378727+0.09625443i \\
0.03179870+0.00680169i
}$
\\[15pt]
$\frac{11}{12} + \frac{i}{10^5}$ & 
$\pmat{
3492.49719898-1.53275909i \\
-0.30408895-3.45551296i \\
-1.73170057-0.14713202i \\
-1.41308637+1.41400914i
}$
&
$\pmat{
6.36033233-8.27194978i \\
4947.57332454+2855.51730098i \\
-6.67017442-3.49755349i \\
-7.75743061-5716.76719430i
}$
&
$\pmat{
0.48540568-0.41339529i \\
-0.23705410+0.05092799i \\
0.07378251+0.09625879i \\
0.03179919+0.00680523i
}$
\\
\bottomrule
\end{tabular}
\vspace{-7pt}
\caption{}
\label{tab:g_values}
\vspace{-0pt}
\end{table}

Now we note that for the point $x=\frac{11}{12}$ we have
\begin{equation*}
\g_{0,x} = 0.10974649141040139\dots
\end{equation*}
with all the other components zero. So $\mg_0(x+it)$ is the only component of $\mg$ that diverges as $t\to0^+$. This is already visible in Table \ref{tab:g_values}. In fact, subtracting the growing piece as in Proposition \ref{prop:uj_vertical_limit} we find the results in Table \ref{tab:g_values_limiting}.

{
\begin{table}[h]
\centering
\begin{tabular}{c | c}
\toprule
$\t$  & $\mg_0 (\t) - \frac{1}{\pi \t_2} \g_{0,\frac{11}{12}}$
\\ [5pt]
\hline\vspace{-3mm}
\\
$\frac{11}{12} + \frac{i}{10^2}$ & 
$-1.2918154651\ldots-1.7245334957\ldots i$
\\[5pt]
$\frac{11}{12} + \frac{i}{10^3}$ & 
$-0.8698937600\ldots-1.5402665800\ldots i$
\\[5pt]
$\frac{11}{12} + \frac{i}{10^4}$ & 
$-0.8445552866\ldots-1.5333698829\ldots i$
\\[5pt]
$\frac{11}{12} + \frac{i}{10^5}$ & 
$-0.8421200156\ldots-1.5327590942\ldots i$
\\
\bottomrule
\end{tabular}
\vspace{-7pt}
\caption{}
\label{tab:g_values_limiting}
\vspace{-8pt}
\end{table}
}
Given the integral representation of $\mg_j$ in \eqref{eq:defn_u_and_error} and the modular transformations of the Maass form $\mG_j$, one can efficiently compute the values of the quantum modular form $\mfrakg_j$. In particular, we have
\begin{equation*}
\mfrakg_0 \lp \frac{11}{12} \rp = 
-0.8418504490893569688\ldots 
-1.532692070451105313\ldots i
\end{equation*}
One can check the quantum modular transformations using the approximate values in Table \ref{tab:g_values_quantum}.

{
\begin{table}[h]
\centering
\begin{tabular}{c c c}
\toprule
$\mfrakg_j \left(\frac{11}{12}\right)$ & $\mfrakg_j \left(\frac{11}{34}\right)$& $\calG_{j,-\frac{1}{2}} \left(\frac{11}{12}\right)$
\\ 
\hline \vspace{-3mm}
\\
$\mat{
-0.84185045 -1.53269207i \\
-0.30191635-3.45091966i \\
-1.73260053-0.14841718i \\
-1.41421356+1.41421356i
}$
&
$\mat{
6.35925312-8.25358660i \\
-1.30588754-1.76561858i \\
-6.66512984-3.49340997i \\
-7.79812377-2.27823717i
}$
&
$\mat{
0.48540561-0.41339484i \\
-0.23705412+0.05092656i\\
0.07378198+0.09625928i \\
0.03179924+0.00680562i 
}$
\\
\bottomrule
\end{tabular}
\vspace{-7pt}
\caption{}
\label{tab:g_values_quantum}
\vspace{-8pt}
\end{table}
}

Finally, in Figures \ref{fig:quantum_1} and \ref{fig:quantum_1_transf} we display various values of $\mfrakg_0$ and its obstruction to modularity.

\begin{figure}[h!]
 \vspace{-10pt}
  \centering
    \includegraphics[scale=0.25]{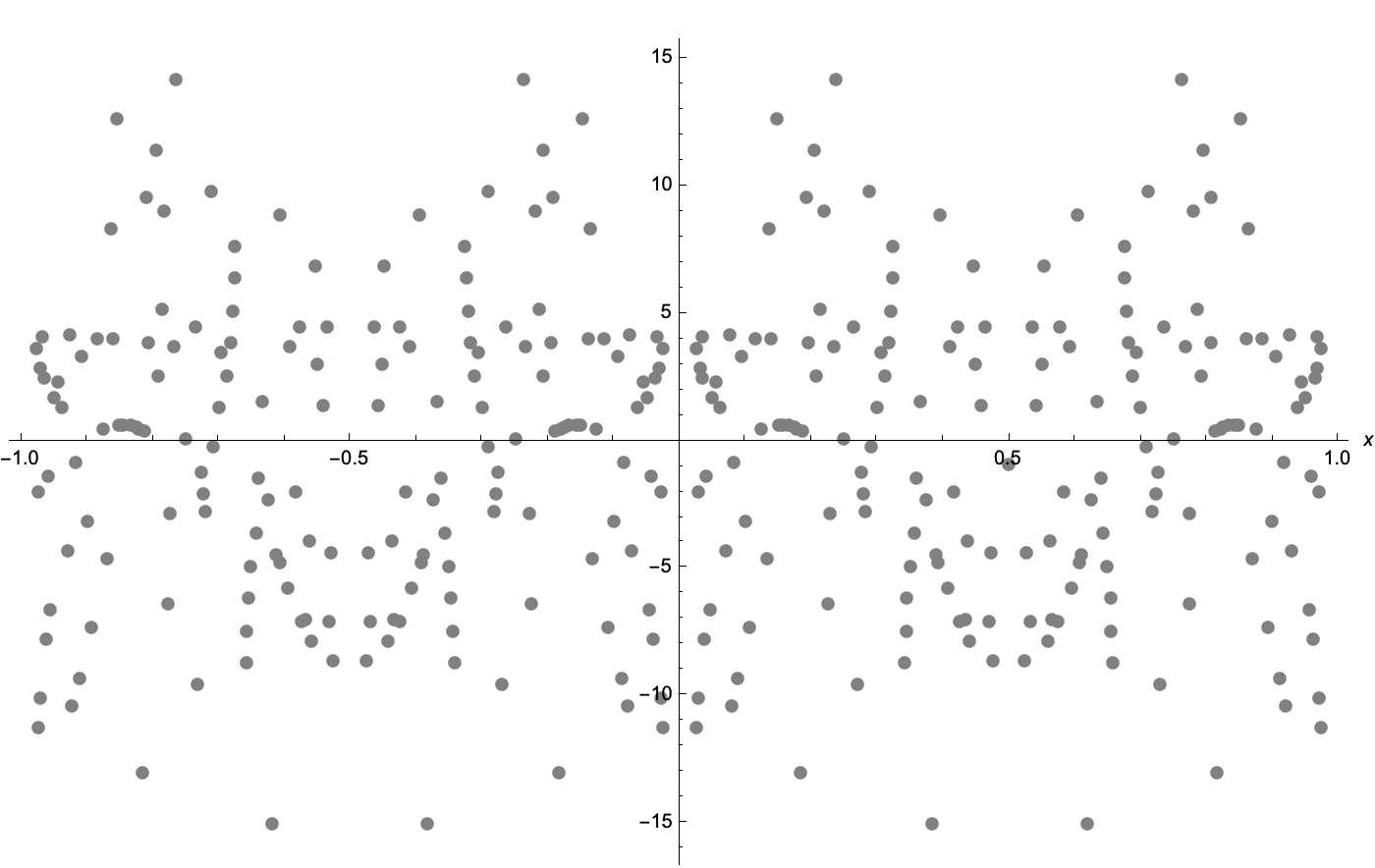}
    \hspace{40pt}
    \includegraphics[scale=0.25]{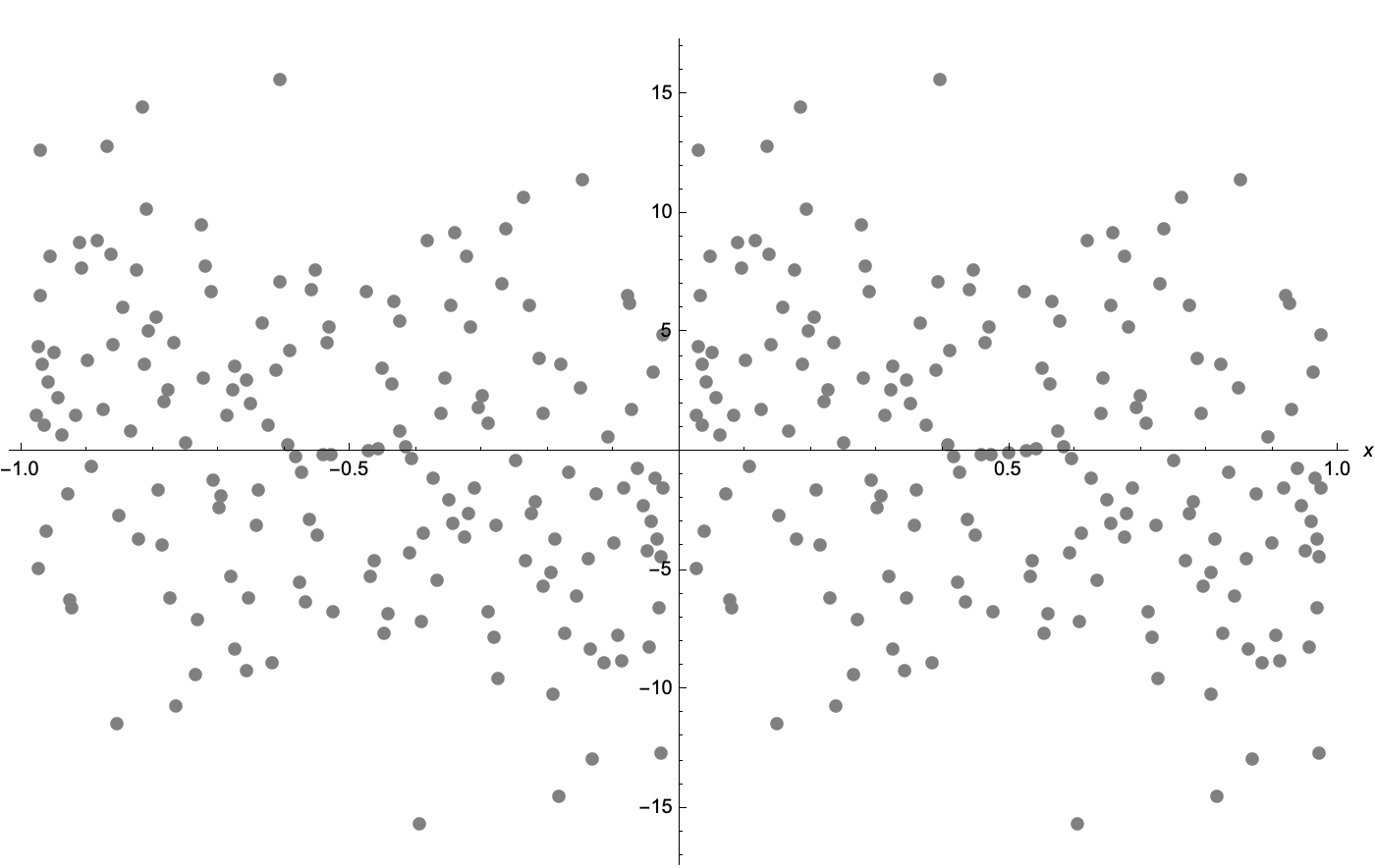}
     \vspace{-10pt}
    \caption{On the left we plot $\mathrm{Re} (\mfrakg_0 (x) )$ and on the right we plot $\mathrm{Im} ( \mfrakg_0 (x) )$ for $x \in \mathcal{Q}_{\GG_0 (2)}$ with $-1 < x < 1$ and denominator at most $40$.}
    \label{fig:quantum_1}
\end{figure}

\begin{figure}[h!]
 \vspace{-10pt}
  \centering
    \includegraphics[scale=0.25]{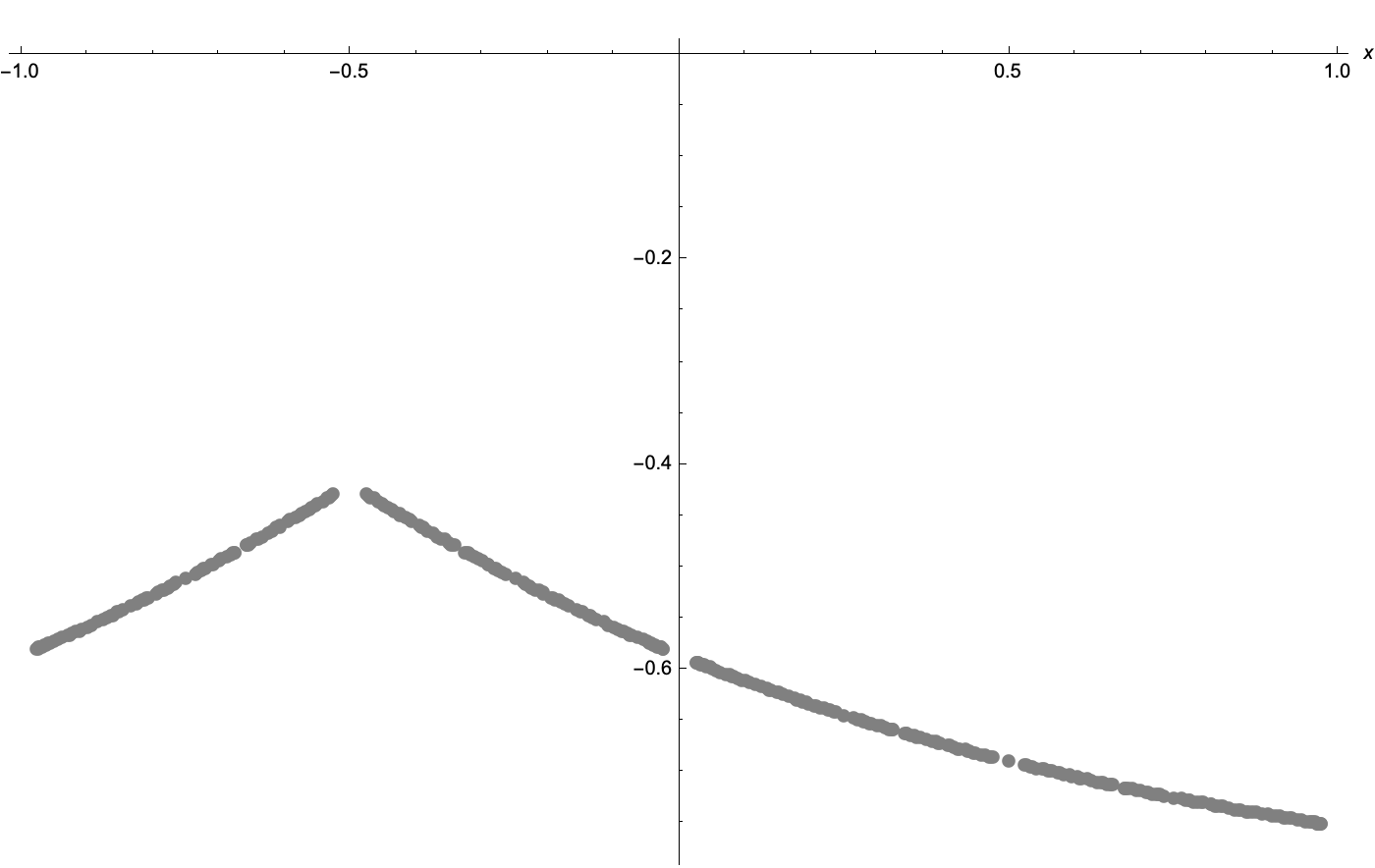}
    \hspace{40pt}
    \includegraphics[scale=0.25]{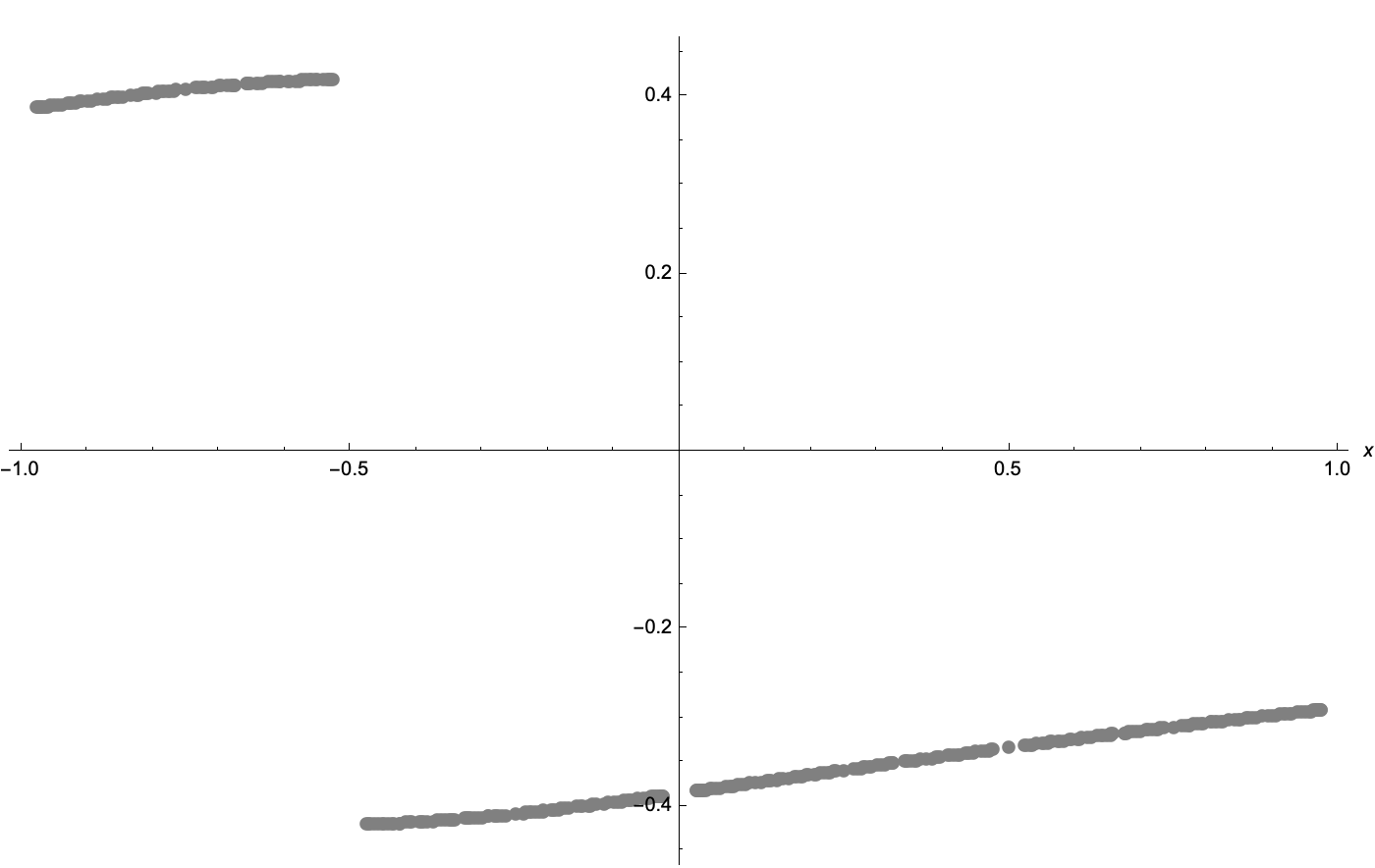}
     \vspace{-10pt}
    \caption{On the left we plot the real part and on the right we plot the imaginary part of $\mfrakg_0 ( \frac{x}{2x+1}) - |cx+d| \sum_{k=0}^3 \Phi_R(0,k) \mfrakg_k(x)$ for $x \in \mathcal{Q}_{\GG_0 (2)}$ with $x\in(-1,1)$ and denominator at most $40$.}
    \label{fig:quantum_1_transf}
\end{figure}


\begin{thebibliography}{99}
\bibitem{ADH} G. Andrews, F. Dyson, and D. Hickerson, {\it Partitions and indefinite quadratic forms}, Invent. Math. {\bf 91} (1988), 391--407.

\bibitem{BK} K. Bringmann and B. Kane, {\it Multiplicative $q$-hypergeometric series arriving from real quadratic fields}, Tram. Amer. Soc. {\bf{363}} (2011), 2191--2209.

\bibitem{BLR} K. Bringmann, J. Lovejoy, and L. Rolen, {\it On some special families of $q$-hypergeometric Maass forms}, Int. Math. Res. Not. {\bf{18}} (2018), 5537--5561.

\bibitem{Co} H. Cohen, {\it q-identities for Maass wave forms}, Invent. Math. {\bf 91} (1988), 409--422.

\bibitem{CS} H. Cohen and F. Str{\"o}mberg, {\it Modular forms: a classical approach}, vol. 179 of Graduate Studies in Mathematics, American Mathematical Society, Providence, RI, 2017.

\bibitem{CFLZ} D. Corson, D. Favero, K. Liesinger, and S. Zabainy, {\it Characters and $q$-series in $\Q(\sqrt{2})$}, J. Number Theory {\bf{107}} (2004), 392--405.

\bibitem{LZ} J. Lewis and D. Zagier, {\it Period functions for Maass wave forms. I}, Annals of Mathematics {\bf{153}} (2001), 191--258.

\bibitem{LNR} Y. Li, H. Ngo, and R. Rhoades, {\it Renormalization and Quantum Modular Forms, Part I: Maass Wave Forms}, arXiv:1311.3043.

\bibitem{Lo} J. Lovejoy, {\it Overpartitions and real quadratic fields}, J. Number Theory {\bf{106}} (2004), 178--184.

\bibitem{LoOs} J. Lovejoy and R. Osburn, {\it Real quadratic double sums}, Indagationes Mathematicae {\bf{26}} (2015), 697--712. 

\bibitem{Ram} S. Ramanujan, {\it The lost notebook and other unpublished papers}, Narosa Publishing House (1988).

\bibitem{Za} D. Zagier, {\it Quantum modular forms}, In: Quanta of Math: Conference in honor of Alain Connes, Clay Mathematics Proceedings {\bf 11}, AMS and Clay Mathematics Institute (2010), 656--675.

\bibitem{Zw2}
S.~Zwegers, {\it {Mock theta functions}}, Ph.D. thesis, Universiteit Utrecht, 2002.

\bibitem{Zw} S. Zwegers, {\it Mock Maass theta functions}, The Quarterly Journal of Math. {\bf 63} (2012), 753--770.
\end{thebibliography}
\end{document}